\newcounter{citedtheorems}
\newcounter{theoremcounter}
\newtheorem{defn}[theoremcounter]{Definition}
\newtheorem{theorem}[theoremcounter]{Theorem}
\newtheorem{extn}[theoremcounter]{Extension}
\newtheorem*{theorem-m}{Theorem \ref{main-theorem}}
\newtheorem*{theorem-x}{Theorem}
\newtheorem*{theorem-abs1}{Theorem \ref{ind-theorem}}
\newtheorem*{theorem-abs2}{Theorem \ref{a23}}
\newtheorem*{theorem-abs3}{Theorem \ref{ind-new}}
\newtheorem*{theorem-abs4}{Theorem \ref{m1}}
\newtheorem{main-claim}[theoremcounter]{Main Claim}
\newtheorem{thm-lit}[citedtheorems]{Theorem}
\newtheorem{defn-lit}[citedtheorems]{Definition}
\newtheorem{fact-lit}[citedtheorems]{Fact}
\newtheorem{fact}[theoremcounter]{Fact}
\newtheorem{cor}[theoremcounter]{Corollary}
\newtheorem{defn-claim}[theoremcounter]{Definition/Claim}
\newtheorem{concl}[theoremcounter]{Conclusion}
\newtheorem{conv}[theoremcounter]{Convention}
\newtheorem{claim}[theoremcounter]{Claim}
\newtheorem{lemma}[theoremcounter]{Lemma}
\newtheorem{obs}[theoremcounter]{Observation}
\newtheorem{rmk}[theoremcounter]{Remark}
\newtheorem{ntn}[theoremcounter]{Notation}
\newtheorem{disc}[theoremcounter]{Discussion}
\newtheorem{expl}[theoremcounter]{Example}
\newtheorem{qst}[theoremcounter]{Question}
\newtheorem{hyp}[theoremcounter]{Hypothesis}
\newcommand{\br}{\vspace{2mm}}
\newcommand{\vsbr}{\vspace{1mm}}
\newcommand{\ml}{\mathcal{L}}
\newcommand{\tlf}{\trianglelefteq}
\newcommand{\rn}{\operatorname{Range}}
\newcommand{\dom}{\operatorname{Dom}}
\newcommand{\lgn}{\operatorname{lg}}
\newcommand{\cl}{\operatorname{cl}}
\newcommand{\Dqf}{\mathbf{D}}
\newcommand{\tpqf}{\operatorname{tp}_{\operatorname{qf}}} 
\newcommand{\tsqf}{\operatorname{\mathbf{D}}_{\operatorname{qf}}}
\newcommand{\upk}{\Upsilon_{\mk}}
\newcommand{\qftp}{\operatorname{tp}_{\operatorname{qf}}}
\newcommand{\tp}{\operatorname{tp}}
\newcommand{\ocirc}{\astrosun}
\newcommand{\posl}{\operatorname{Pos}}
\newcommand{\negl}{\operatorname{Neg}}
\newcommand{\up}{\Upsilon}
\newcommand{\ii}{\mathbf{i}}
\newcommand{\ts}{\mathbf{S}}
\newcommand{\rstr}{\upharpoonright}
\newcommand{\vp}{\varphi}
\newcommand{\ma}{\mathbf{a}}
\newcommand{\mb}{\mathbf{b}}
\newcommand{\mc}{\mathbf{c}}
\newcommand{\trg}{T_{\mathbf{rg}}}
\newcommand{\GEM}{\operatorname{GEM}}
\newcommand{\xc}{\mathbf{c}}
\newcommand{{\xw}}{\mathbf{w}}
\newcommand{\xr}{\mathfrak{r}}
\newcommand{\xt}{\mathfrak{t}}
\newcommand{\inc}{\operatorname{inc}}
\newcommand{\mk}{\mathcal{K}}
\newcommand{\mcs}{\mathcal{S}}
\title[Generalized EM models...]{A separation theorem for simple theories}
\author[Generalized EM models...]{M. Malliaris and S. Shelah} 
\thanks{\emph{Thanks:} 
Malliaris was partially supported by NSF CAREER award 1553653 and a Minerva research foundation membership at IAS. 
Shelah was partially supported by European Research Council grant 338821 and ISF grant 1838/19.  Both authors thank NSF grant 
1362974 to Shelah at Rutgers, ERC 338821, and NSF-BSF 2051825. This is paper 1149 in Shelah's list.}
\address{Department of Mathematics, University of Chicago, 5734 S. University Avenue, Chicago, IL 60637, USA}
\email{mem@math.uchicago.edu}
\address{Einstein Institute of Mathematics, Edmond J. Safra Campus, Givat Ram, The Hebrew
University of Jerusalem, Jerusalem, 91904, Israel, and Department of Mathematics,
Hill Center - Busch Campus, Rutgers, The State University of New Jersey, 110
Frelinghuysen Road, Piscataway, NJ 08854-8019 USA}
\email{shelah@math.huji.ac.il}
\urladdr{http://shelah.logic.at}
\begin{document}

\begin{abstract}  
This paper builds model-theoretic tools to detect changes in complexity among the simple theories. 
We develop a generalization of dividing, called shearing, which depends on a so-called context $\xc$. 
This leads to defining $\xc$-superstability, a syntactical notion, which includes supersimplicity as a special case.  
The main result is 
a separation theorem showing that for any countable context 
$\xc$  and any two theories $T_1$, $T_2$ such that 
$T_1$ is $\xc$-superstable and $T_2$ is $\xc$-unsuperstable, and for arbitrarily large $\mu$, it is possible to build 
models of any theory interpreting both $T_1$ and $T_2$ whose restriction to $\tau(T_1)$ is $\mu$-saturated and whose 
restriction to $\tau(T_2)$ is not $\aleph_1$-saturated.  (This suggests ``$\xc$-superstable'' is really a dividing line.) 
The proof uses generalized Ehrenfeucht-Mostowski models, and along the way, we clarify the use of these 
techniques to realize certain types while omitting others. 
In some sense, shearing allows us to study the interaction of 
complexity coming from the usual notion of dividing in simple theories and the more combinatorial complexity  
detected by the general definition. 
This work is inspired by our recent progress on Keisler's order, but does not use ultrafilters, rather 
aiming to build up the internal model theory of these classes. 
\end{abstract}

\maketitle

\section{Introduction and motivation}

This paper aims to develop internal model-theoretic tools to detect significant changes in complexity among the simple theories. 

Motivating examples of 
simple theories \cite{Sh:93} include the random graph and random $k$-uniform hypergraphs for 
arbitrary finite $k$. It was subsequently shown that pseudofinite fields, certain higher-order analogues of the triangle-free random graph, and the theory 
ACFA are also simple, see \cite{hrushovski1}, \cite{hrushovski1}, \cite{ch}. The 90s saw a great deal of work on simple theories, as 
recorded in the 2002 survey \cite{GIL}. Still, basic questions about simple theories, such as \ref{qst1} below, remain open. 
The tools we have to detect structural changes in stable theories, such as dividing, still work well in simple theories but the extent to which they explain the whole picture is less clear. 

In the course of our recent work on Keisler's order, a large-scale classification program in model theory which compares theories roughly according to the 
likelihood of saturation in their regular ultrapowers, we made a surprising discovery. 
Although the union of the first two classes in Keisler's order is precisely the stable theories \cite{Sh:a}, 
it turns out that this order has infinitely many classes, already within the simple unstable theories with no nontrivial
dividing, those `near' the random graph \cite{MiSh:1050}. 
A key role was played by what were essentially disjoint unions of the higher analogues of the triangle-free random graph, studied by Hrushovski  
\cite{hrushovski1}. 

The thesis that differences seen by ultrafilters should be significant (as ultrafilters 
are, in some sense, very forgiving) suggests that if a stratification of levels of randomness is appearing in this presumably simple part of the 
map, one should look for an internal explanation. 

It is useful to remember what Keisler's order tells us about the stable theories. 
When the second author proved that the union of the first two classes in Keisler's order is precisely the stable theories, his proof used a 
characterization of the saturated models of stable theories: a model of a stable theory is $\lambda^+$-saturated iff it is $\kappa(T)$-saturated and 
every maximal indiscernible set has size at least $\lambda^+$ \cite[III.3]{Sh:a}. This required developing forking (dividing) and uniqueness of nonforking extensions in stable theories.

The analogous characterization of saturated models of simple theories  
seems to be a real challenge to our understanding:

\begin{qst} \label{qst1}
Give a characterization of the saturated models of simple theories analogous to the theorem that a model of a stable theory is $\lambda^+$-saturated iff it is $\kappa(T)$-saturated and every maximal indiscernible set has size at least $\lambda^+$. 
\end{qst}

Although Question \ref{qst1} remains for the time being open,  in what follows, we will be guided by and will further develop
this core idea of the relation between understanding 
dividing and understanding saturation.

It is also useful to recall some particulars of the higher analogues of triangle-free graphs from \cite{hrushovski1}. 
Let $T_{n,k}$ denote the 
$(n+1)$-free $(k+1)$-hypergraph,  i.e. the
model completion of the theory of a uniform $(k+1)$-ary hypergraph in which there are no $(n+1)$ vertices of which every $(k+1)$ form a hyperedge. 
The triangle-free random graph is not simple, however Hrushovski showed that for $n>k \geq 2$, $T_{n,k}$ is simple with only trivial dividing, 
see \ref{fact15} and \ref{t:trd} below.  So where does the complexity of the $T_{n,k}$s come from? `Amalgamation' is a natural answer, 
and was key to \cite{hrushovski1} and to the property in \cite[1.5]{MiSh:1050}.
Moreover, these amalgamation problems appeared orthogonal to forking.  

However, the methods of the present paper open up a 
different answer. 

We introduce a natural extension of dividing, which we call 
\emph{shearing}, and which includes dividing as a special case.   This definition is developed by looking at dividing in a certain  
canonical context, that of Ehrenfeucht-Mostowski models, and studying realization of types there. 
  In the first part of the paper,  extending an idea from \cite{MiSh:1124}, 
we develop the relation of weak definability of types in generalized Ehrenfeucht-Mostowski models to realizing those types in larger templates. 
There are many parallels to stable phenomena, and various
 definitions which specialize to the familiar ones in the stable case, but they have their own flavor.  
 
In the second part of the paper, we isolate the main mechanism of this correspondence as the definition of \emph{shearing}, which 
a priori makes no reference to $\GEM$ models or to realizing types. 
Dividing involves inconsistency of a formula instantiated along an indiscernible 
sequence; shearing involves inconsistency of a formula instantiated along a generalized indiscernible sequence. 
The definition of shearing involves
choosing an element $I$ from a class $\mk$ of index models, extending the class of linear orders and satisfying certain basic requirements. A \emph{countable context} 
$\mc = (I, \mk)$ is essentially 
a choice of some nontrivial countable $I$ in some allowed $\mk$.  We introduce a notion of a theory being \emph{$\xc$-superstable}, 
essentially the analogue of superstability (or supersimplicity) for the corresponding shearing. 

Theorem \ref{firstmainth} below, the ``separation theorem,'' then explains the connection between shearing and saturation: 
it says essentially that given two theories $T_1, T_2$ and a countable context $\xc$ such that 
$T_1$ is $\xc$-superstable and $T_2$ is $\xc$-unsuperstable, it is possible to build a model (of any theory interpreting both $T_1$ and $T_2$, without loss of generality in disjoint signatures) whose reduct to $\tau(T_1)$ is arbitrarily saturated while the reduct to $\tau(T_2)$ is not even $\aleph_1$-saturated.  
In some sense, we may add weak definitions for all relevant types from $T_1$ while types from $T_2$ remain in this sense undefinable. 
(Alternately, either half of the theorem can be taken as a recipe for building very saturated or very unsaturated models of a given theory according 
to its $\xc$-superstability for a given context.)

Some consequences for $\tlf^*$ are given in \S \ref{s:tlf}. 
In \S \ref{s:simple}, we prove, in some sense, that the focus of shearing is within simplicity. 
\S \ref{other-kappa} outlines natural extensions and some open problems. A companion paper \cite{MiSh:F2061} in progress  
gives a full analysis of the case of the random graph, 
 characterizing the contexts for which it is $\xc$-superstable, and proving that the theories $T_{n,k}$ are strictly more complex in the sense 
of shearing. 

We thank the anonymous referee for many excellent comments and helpful questions. 
We also thank A. Peretz, N. Ramsey, F. Parente, and D. Ulrich.   

\setcounter{tocdepth}{1}
\tableofcontents

\section{Basic notation and definitions}
\setcounter{theoremcounter}{0}

\begin{conv}
All theories are complete and first order unless otherwise stated. 
\end{conv}

\begin{conv}
Given a universal class of models $\mk$, 
we will write ``$J \in \mk$ is $\aleph_0$-saturated'' to mean ``$J$ is countably homogeneous and countably universal for elements of $\mk$,'' 
which makes sense even if $\mk$ is not elementary. 
\end{conv}

We work in the setup of generalized Ehrenfeucht-Mostowski (GEM) models. These methods begin with the EM models of Ehrenfeucht and Mostowski 1965 \cite{EM} and were further developed in e.g.  
Shelah 1978, chapters VII-VIII \cite{Sh:a} and Shelah \cite{Sh:E59}. 
A self-contained introduction may be found in 
our recent paper \cite{MiSh:1124}, \S 3, which takes up the development of these techniques and 
adds the ``G'' for ``generalized" to stress that we may vary the index model $I$, 
see below.  Here we review some basic definitions motivated there, and clarify our assumptions for the 
present paper.

For Ehrenfeucht and Mostowski, index models were linear orders; we will use expansions of linear orders, 
which need not come from an elementary class. (An example from \cite{MiSh:1124} is the class $\mk_\mu$ of linear orders expanded by 
$\mu$ unary predicates which partition the domain; note the ``partition'' requirement implies the class is not elementary.)  
The following general definition will suffice for this paper.\footnote{Item \ref{d:imc}(5) is more than is needed but simplifies our proofs here; asking that 
$\Dqf(\mk)$, the set of quantifier-free types, has amalgamation would suffice.}

\begin{defn}[Index model class] \label{d:imc} \emph{ } Call $\mk$ an index model class, abbreviated \emph{imc}, 
when for some signature $\tau = \tau_\mk \supseteq \{ < \}$,  
\begin{enumerate}
\item  $\mk$ is a class of $\tau$-models, closed under isomorphism, but not necessarily an elementary class. 
\item For each $I \in \mk$, $<^I$ linearly orders $I$.
\item $\mk$ is universal,\footnote{This implies $\mk$ is an $EC(\emptyset, \Gamma)$-class, that is, the set of models of a first order theory which omit some 
$($possibly empty$)$ set of types.  Inversely, 
if $T$ is universal in $\ml(T)$, $\Gamma$ a set of q.f. types then $EC(T, \Gamma)$ is a universal class. As mentioned, $\mk$ need not be an 
elementary class.} so  $I \in \mk$ iff every finitely generated submodel of $I$ is in $\mk$.
\item We could allow partial functions, so for every function symbol $F \in \tau$, there is a predicate $P_F$ which is always interpreted as its domain.  
\item For every $I \in \mk$ there is an $\aleph_0$-saturated $J \in \mk$ with $I \subseteq J$. 

\item $\mk$ is Ramsey, see \ref{d:r1} below.  
\end{enumerate}
\end{defn}

\begin{defn}[$\GEM$ models and proper templates, \cite{Sh:E59} Definition 1.8] \label{d:t-e59} \emph{ }
We say 
$N = \GEM(I, \Phi) = \GEM(I, \Phi, \ma)$ is a generalized Ehrenfeucht-Mostowski model with skeleton ${\ma}$ when
for some vocabulary $\tau = \tau_\Phi$ we have the following.  
\begin{enumerate}
\item $I$ is a model, called the index model. 
\item $N$ is a $\tau_\Phi$-structure and ${\ma} = \{ \bar{a}_t : t \in I \}$ generates $N$. 
\item $\langle \bar{a}_t : t \in I \rangle$ is quantifier free indiscernible in $N$. 
\item $\Phi$ is a template, taking $($for each $n<\omega$$)$ the quantifier free type of 
$\bar{t} = \langle t_0, \dots, t_{n-1} \rangle$ in $I$ to the quantifier free type of 
$\bar{a}_{\bar{t}}$ in $N$. $($So $\Phi$ determines $\tau_\Phi$ uniquely, and also 
a theory $T_\Phi$, the maximal $\tau_\Phi$-theory which holds in every such $N$.$)$ 
\end{enumerate}
$($Note that $\GEM(I, \Phi)$ is not uniquely determined as we have to 
choose the elements of e.g. $N \setminus \rn(\ma)$, but as usually no confusion arises we may omit the additional information. 
So really, ``$N = \GEM(I, \Phi)$'' is a relation.$)$
\end{defn}

When it is useful to specify the skeleton $\ma$ generating a given $\GEM$ model we may display it. Templates 
are simply possible instructions, which may not be `coherent' or give rise to a model; properness says they do. 

\begin{defn} \label{d:proper}
The template $\Phi$ is called proper for $I$ if 
there is $M$ such that $M = \GEM(I, \Phi)$. We say $\Phi$ is proper for a class $\mk$ if $\Phi$ is proper for 
all $I \in \mk$. 
\end{defn}

\begin{defn} \label{conv-upsilon}
Given a class $\mk$, write $\upk$ for the class of templates proper for $\mk$, and write 
$\up$ when $\mk$ is clear from context. 
\end{defn}

\begin{conv} \label{c:nice} 
All templates we consider are assumed to satisfy: 
\begin{enumerate}
\item[(a)] 
nontriviality, i.e. we may add in the $\GEM$ definition the condition
that $\lgn(\bar{a}_t) \geq 1$ and $\langle \bar{a}_t : t \in I \rangle$ is without repetition, 
\item[(b)] $T_\Phi$ is well defined and 
has Skolem functions, where well defined means: 
\begin{enumerate}
\item[(1)] $T_\Phi$ is complete. 
\item[(2)] for every $I \in \mk$, $\langle \bar{a}_{t} : t \in I \rangle$ is indiscernible, not just quantifier-free 
indiscernible, in $\GEM(I, \Phi)$. $($this really follows$)$
\item[(3)] $\GEM(I, \Phi)$ is unique in the sense that it depends, up to isomorphism, 
on $\Phi$ and the isomorphism type of $I$.  More fully: if $N = \GEM(I, \Phi)$ then for some $\ma$, 
$N = \GEM(I, \Phi, \ma)$ and if $N^\prime = \GEM(I, \Phi, \ma^\prime)$ then there is a unique isomorphism from $N$ onto 
$N^\prime$ mapping $\ma$ to $\ma^\prime$, i.e. $\bar{a}_s$ to $\bar{a}^\prime_s$ for $s \in I$. 
\item[(4)] for every $J \supseteq I$ from $\mk$ we have $\GEM(I, \Phi) \preceq \GEM(J, \Phi)$.
\\ More fully, considering the parenthesis in $\ref{d:t-e59}$ we should say: for every $J \supseteq I$ and 
$N_2 = \GEM(J, \Phi)$ there is $N_1 = \GEM(I, \Phi)$ such that $N_1 \subseteq N_2$ hence $N_1 \preceq N_2$. 
Moreover if $N_2 = \GEM(J, \Phi, \ma)$ then $N_1 = \GEM(I, \Phi, {\ma \rstr I})$. $($As in $(3)$, if $N^\prime = \GEM(I, \Phi, \ma^\prime)$ 
then there is a unique isomorphism from $N^\prime$ onto $N_1$ mapping $\ma^\prime$ onto $\ma \rstr I$$)$. 
\end{enumerate}
\end{enumerate}
\end{conv}

\begin{defn} \label{d:order}
Given a class of templates $\Upsilon$, let $\leq_{\Upsilon}$ be the natural partial order 
on $\Upsilon$, that is, $\Phi \leq_{\Upsilon} \Psi$ means that $\tau(\Phi) \subseteq \tau(\Psi)$ and 
$\GEM(I,\Phi) \subseteq \GEM(I, \Psi)$ and $\GEM_{\tau(\Phi)}(I,\Phi) \preceq \GEM_{\tau(\Phi)}(I, \Psi)$.   
We may use $\leq$ when $\Upsilon$ is clear from context. 
\end{defn}

\begin{defn} \label{d:r1}
We say the class $\mk$ is Ramsey when:   given any 
\begin{enumerate}[a)]
\item $J \in \mk$ which is $\aleph_0$-saturated,  
\item model $M$, and 
\item sequence ${\mb} = \langle \bar{b}_t : t \in J \rangle$ of finite sequences from $M$ with the length of $\bar{b}_t$ 
determined by $\tpqf(t, \emptyset, J)$, 
\end{enumerate}
there exists a template $\Psi$ which is proper for $\mk$  such that: 
\begin{enumerate}[i)]
\item $\tau(M) \subseteq \tau(\Psi)$ 
\item $\Psi$ reflects ${\mb}$ in the following sense: 

\noindent for any $s_0, \dots, s_{n-1}$ from $J$, 
\\ any $\vp = \vp(x_0, \dots, x_{m-1}) \in \ml(\tau(M))$, 
\\ and any $\tau(M)$-terms $\sigma_\ell(\bar{y}_0, \dots, \bar{y}_{n-1})$ for $\ell = 0, \dots, m-1$, 

\begin{quotation}
\vsbr
\noindent \underline{if} 
$ M \models \vp[\sigma_0(\bar{b}_{t_0}, \dots, \bar{b}_{t_{n-1}}), \dots, \sigma_{m-1}(\bar{b}_{t_0}, \dots, \bar{b}_{t_{n-1}})] $
\\ for every $t_0, \dots, t_{n-1}$ realizing $\tpqf(s_0~^\smallfrown \cdots ^\smallfrown s_{n-1}, \emptyset, J)$ in $J$,

\vsbr 
\noindent \underline{then} $\GEM(J, \Psi) \models \vp[\sigma_0(\bar{a}_{s_0}, \dots, \bar{a}_{s_{n-1}}), \dots, \sigma_{m-1}(\bar{a}_{s_0}, \dots, \bar{a}_{s_{n-1}})]$
\end{quotation}
where $\langle \bar{a}_s: s \in J \rangle$ denotes the skeleton of $\GEM(J, \Psi)$. 
\end{enumerate}
\end{defn}

\noindent 
We will generally use this definition in the form of Corollary \ref{d:ramsey-exp}.  

\begin{cor} \label{d:ramsey-exp} If $\mk$ is Ramsey,  whenever we are given:
\begin{enumerate}[a)]
\item $J \in \mk$ is $\aleph_0$-saturated
\item $\Phi$ a template proper for $\mk$
\item $M = \GEM(J, \Phi)$ with skeleton $\ma$ 
\item $N^+$, an elementary extension or expansion of $M$, or both   
\end{enumerate}
then there is a template $\Psi$ proper for $\mk$ with $\tau(\Psi) \supseteq \tau(N^+)$ and $\Psi \geq \Phi$.  
Moreover, $\Psi$ reflects $\ma$ in the sense described in $\ref{d:r1}$ ii), with $\ma$ here replacing $\mb$ there. 
\end{cor}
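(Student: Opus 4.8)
The plan is to derive Corollary~\ref{d:ramsey-exp} from Definition~\ref{d:r1} by checking that the hypotheses (a)--(d) of the corollary give us exactly the data (a)--(c) needed to invoke the Ramsey property, once we package $N^+$ correctly. First I would set $\mb = \ma$, the skeleton of $M = \GEM(J,\Phi)$, so $\mb = \langle \bar a_t : t \in J\rangle$; by Convention~\ref{c:nice}(a) the lengths $\lgn(\bar a_t)$ depend only on $\tpqf(t,\emptyset,J)$, so condition (c) of \ref{d:r1} is met. The ambient ``model $M$'' of \ref{d:r1}(b) will be $N^+$ itself, which makes sense because $N^+$ is an elementary extension and/or expansion of $M$ and hence the sequence $\ma$ still lies in $N^+$ (extensions add elements, expansions add symbols, neither removes the $\bar a_t$). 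Since $J$ is $\aleph_0$-saturated and in $\mk$, \ref{d:r1}(a) holds. So the Ramsey property yields a template $\Psi$, proper for $\mk$, with $\tau(N^+) \subseteq \tau(\Psi)$ and with the reflection property \ref{d:r1}(ii) for $\ma$ inside $N^+$.

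Next I would verify the two remaining assertions: that $\Psi \geq_\Upsilon \Phi$ in the sense of Definition~\ref{d:order}, and that the ``moreover'' clause (reflection of $\ma$) is literally the conclusion of \ref{d:r1}(ii). The latter is immediate by our choice $\mb = \ma$. For $\Psi \geq \Phi$ we need $\tau(\Phi)\subseteq\tau(\Psi)$, $\GEM(J,\Phi)\subseteq\GEM(J,\Psi)$, and $\GEM_{\tau(\Phi)}(J,\Phi) \preceq \GEM_{\tau(\Phi)}(J,\Psi)$. The inclusion of signatures is clear since $\tau(\Phi)\subseteq\tau(M)\subseteq\tau(N^+)\subseteq\tau(\Psi)$. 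For the inclusion and elementarity of the $\GEM$ models, the point is that $\Psi$ was built to reflect $\ma$: apply the reflection clause to atomic and then to arbitrary $\vp\in\ml(\tau(N^+))$ (using that $T_\Phi$ has Skolem functions by Convention~\ref{c:nice}(b), so every element of $\GEM(J,\Phi)$ is $\sigma(\bar a_{t_0},\dots,\bar a_{t_{n-1}})$ for some $\tau(\Phi)$-term $\sigma$, and terms are a special case of the terms allowed in \ref{d:r1}(ii)). Concretely: if $\GEM(J,\Phi)\models \vp[\sigma_0(\bar a_{\bar s}),\dots]$ then by indiscernibility this holds for all tuples of the same quantifier-free type, so the ``if'' of \ref{d:r1}(ii) is satisfied (taking $M$ there to be $\GEM(J,\Phi)$ viewed inside $N^+$, noting $M\preceq N^+$ means the two agree on $\ml(\tau(\Phi))$-formulas), whence $\GEM(J,\Psi)\models\vp[\sigma_0(\bar a_{\bar s}),\dots]$. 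Running this for $\vp$ and $\neg\vp$ gives that the map $\bar a_t^{\GEM(J,\Phi)}\mapsto \bar a_t^{\GEM(J,\Psi)}$ extends to a $\tau(\Phi)$-elementary embedding, which is the required $\leq_\Upsilon$ relation; that it is an inclusion (not merely an embedding) is a standard normalization, identifying the skeletons.

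The main obstacle, I expect, is bookkeeping rather than mathematics: keeping straight the three roles of ``the model'' — $M=\GEM(J,\Phi)$, its enlargement $N^+$, and the new $\GEM(J,\Psi)$ — and making sure the reflection clause of \ref{d:r1} is applied with the correct choice of ambient model and terms. In particular one must be slightly careful that applying \ref{d:r1}(ii) with ambient model $N^+$ produces reflection of all of $\mathrm{Th}(N^+)$-definable behavior of $\ma$, and then separately observe that restricting to $\tau(\Phi)$-formulas and $\tau(\Phi)$-terms recovers the comparison with $\Phi$; this is where the hypothesis ``$N^+$ elementary extension or expansion of $M$, or both'' is used, since it guarantees $M$ and $N^+$ assign the same truth values to $\tau(\Phi)$-sentences about $\ma$. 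Once those identifications are made, properness of $\Psi$ for $\mk$ and completeness of $T_\Psi$ come directly from the conclusion of the Ramsey property together with Convention~\ref{c:nice}, and there is nothing further to prove.
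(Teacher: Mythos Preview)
Your proposal is correct and matches the paper's intended approach: the paper states this corollary without proof, treating it as an immediate unpacking of Definition~\ref{d:r1} with $\mb=\ma$ and the ambient model taken to be $N^+$. You have supplied more detail than the paper does, in particular the verification of $\Psi\geq\Phi$ via reflection plus the full indiscernibility of $\ma$ in $M$ (Convention~\ref{c:nice}(b)(2)); this is exactly the argument one would give if asked to spell it out.
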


\begin{rmk}
The term ``Ramsey property'' for an index model class is also justified 
by Scow's result that $($in our language$)$ this  
corresponds naturally to the set of finite substructures of elements of the class 
being a Ramsey class in the sense of  Ne\v{s}et\v{r}il \cite{ns} and
of Kechris-Pestov-Todor\v{c}evi\'{c} \cite{kpt}. See Scow \cite{scow2} Theorem 4.31. 
\end{rmk}

The last definition of this section will be crucial for the rest of the paper.  Recall the definition of ``index model class,'' \ref{d:imc}, which 
had various mild restrictions on which classes of index models we may consider. 
For many of our arguments we will fix not only some index model class $\mk$ but some particular $I \in \mk$, and the following conditions 
ensure in various ways that our $I$ is not trivial.

\begin{defn}[Context] \label{d:context}
A context $\xc$ is a tuple $(I, \mk) = (I_\xc, \mk_\xc)$ such that $\mk$ is an index model class and $I \in \mk$, and in addition: 

\begin{enumerate}
\item  If $\tau(\mk)$ includes function symbols, then in addition we require that $I = \cl(I)$. 

\item $I_\xc$ is \emph{nontrivial}, meaning that $I \neq \cl(\bar{t}, I)$ for every finite $\bar{t} \subseteq I$. 

\item $I$ is \emph{reasonable}, meaning that whenever 
$I \subseteq J$ where $J \in \mk$ is $\aleph_0$-saturated, if $\bar{t} \in {^{\omega>}I}$, $s \in J$ and 
\[ \mbox{ \emph{(for all $r \in J$) ($\tpqf(r,\bar{t}, J) = \tpqf(s, \bar{t}, J)$ implies $r=s$) }} \]
 then $s \in \cl_I(\bar{t})$.

\item $I$ is \emph{non-1-trivial}, meaning that whenever $I \subseteq J$ where $J \in \mk$ is $\aleph_0$-saturated, 
$\bar{t} \in {^{\omega>}I}$, $s \in J$ and $s \notin \cl(\bar{t})$ 
then 
\[ \{  r \in J ~ : ~ \tpqf(r,\bar{t}, J) = \tpqf(s,\bar{t}, J) \}  \mbox{ is infinite.} \] 
\end{enumerate}
\end{defn}

\begin{ntn} \label{notation15}
Given a context $\xc$,  which fixes $\mk = \mk_\xc$ and $\Upsilon = \Upsilon_\mk$, and given a theory $T$, 
\begin{enumerate}
\item[(a)] Let $\Upsilon[T]$ be the class of $\Phi \in \Upsilon$ such that $\tau_\Phi \supseteq\tau(T)$ and 
$T_\Phi \supseteq T$ and $T_\Phi$ has Skolem functions for $T$. 
\item[(b)] Let $\Upsilon[\lambda, T]$ be the class of $\Phi \in \Upsilon[T]$ such that $\tau(\Phi)$ has size $\leq \lambda$.
\end{enumerate}
\end{ntn}

\begin{ntn}  \label{notation14}
Given any linearly ordered set $I$, 
let $\inc_n(I)$ denote the set of strictly increasing $n$-element sequences from $I$, and let $\inc(I) = \bigcup_n \inc_n(I)$. 
\end{ntn}

\vspace{5mm}

\section{$\mk$-indiscernible sequences}
\setcounter{theoremcounter}{0}

This section discusses $\mk$-indiscernible sequences, for a given index model class $\mk$, Definition \ref{d:imc} above.   
These were introduced in \cite{Sh:a} and have an interesting and varied history in the model theoretic 
literature, both in works of the second author and many others.  
Notably, 
the idea that generalized indiscernibles could give insight into model-theoretic dividing lines has been developed in a different direction by 
Scow \cite{scow2} and Guingona-Hill-Scow \cite{ghs}.

Readers familiar with 
some such definition are nonetheless encouraged to read the remark after Definition \ref{d:current}. 

To start, for the purposes of discussion, the familiar definition of an indiscernible sequence may be written as follows. 

\begin{defn} \label{d:ind-a}
Suppose we are given an ordered set $(I, <)$, a model $N$ of $T$, $A \subseteq N$, and a map $f : I \rightarrow {^{\omega>}N}$. For each 
$t \in I = \dom (f)$, write $\bar{b}_t$ for $f(t)$, so the image of $f$ is the sequence $\mb = \langle \bar{b}_t : t \in I \rangle$. We say $\mb$
is an \emph{indiscernible sequence over $A$} when it satisfies: 
for all $k<\omega$, all $t_0,\dots, t_{k-1}$ and $t^\prime_0, \dots, t^\prime_{k-1}$ from $I$,  if 
\begin{equation}
\label{ind-1} 
\tpqf({{t}_0}^\smallfrown {{t}_1}^\smallfrown \cdots ^\smallfrown {t}_{k-1}, \emptyset, I) = 
\tpqf({t^\prime_0}^\smallfrown {t^\prime_1}^\smallfrown \cdots ^\smallfrown t^\prime_{k-1}, \emptyset, I) 
\end{equation}
then in $N$, or equivalently in the monster model $\mathfrak{C} = \mathfrak{C}_T$, 
\begin{equation} 
\tp_{\tau(T)}({\bar{b}_{{t}_0}} ~^\smallfrown ~ {\bar{b}_{{t}_1}} ~ ^\smallfrown ~ \cdots ~^\smallfrown ~{\bar{b}_{{t}_{k-1}}}, A, \mathfrak{C}) 
= \tp_{\tau(T)}({{\bar{b}_{t^\prime_0}}} ~ ^\smallfrown~ {\bar{b}_{t^\prime_1}} ~ ^\smallfrown  ~\cdots ~ ^\smallfrown{\bar{b}_{t^\prime_{k-1}}}, A, \mathfrak{C}). 
\end{equation}
\end{defn}

\vspace{5mm}

\noindent In the following key definition,  we choose an $I$ which may be an expansion of a linear order,  
the domain of $f$ changes from $I$ to ${^{\omega>}I}$,  
and \ref{d:ind-a}(\ref{ind-1}) is updated in the natural way (note the inset line beginning ``$\ell < k$'' in \ref{d:current} is trivially satisfied when the 
$t$'s are singletons). 

\br

\begin{defn}[$\mk$-indiscernible sequence] \label{d:current} \emph{ } 
Suppose we are given a context $\xc$, thus $I = I_\xc$ and $\mk = \mk_\xc$. 
Suppose we are given a model $N$ of $T$, $A \subseteq N$, and a map $f: {^{\omega >}I} \rightarrow {^{\omega >}N}$. 
For each $\bar{t} \in \dom(f)$, write $\bar{b}_{\bar{t}}$ for $f(\bar{t})$, so the image of $f$ is the sequence 
$\mb = \langle \bar{b}_{\bar{t}} : \bar{t} \in {^{\omega >}I} \rangle$. We say $\mb$ is a \emph{$\mk$-indiscernible sequence over $A$} 
when it satisfies:
for all $k < \omega$, all $\bar{t}_0, \dots, \bar{t}_{k-1}$ and all $\bar{t}^\prime_0,\dots, \bar{t}^\prime_{k-1}$ from ${^{\omega>}I}$, if 
\[ \ell < k \implies \lgn(\bar{t}_\ell) = \lgn(\bar{t}^\prime_\ell)  
 ~ \mbox{ and } \]
\[ \tpqf(  {\bar{t}_0}  ~ ^\smallfrown {\bar{t}_1} ~^\smallfrown~ \cdots ~^\smallfrown ~\bar{t}_{k-1}, \emptyset, I) = 
\tpqf(  {\bar{t}^\prime_0} ~^\smallfrown ~{\bar{t}^\prime_1} ~^\smallfrown ~\cdots ~ ^\smallfrown  ~ \bar{t}^\prime_{k-1}, \emptyset, I) \] 
then in $N$, or equivalently in the monster model $\mathfrak{C} = \mathfrak{C}_T$, 
\[ \tp_{\tau(T)}({\bar{b}_{{\bar{t}}_0}} ~^\smallfrown ~ {\bar{b}_{{\bar{t}}_1}} ~ ^\smallfrown ~ \cdots ~^\smallfrown ~{\bar{b}_{{\bar{t}}_{k-1}}}, A, \mathfrak{C}) = 
\tp_{\tau(T)}({{\bar{b}_{\bar{t}^\prime_0}}} ~ ^\smallfrown~ {\bar{b}_{\bar{t}^\prime_1}} ~ ^\smallfrown  ~\cdots ~ ^\smallfrown{\bar{b}_{\bar{t}^\prime_{k-1}}}, A, \mathfrak{C}). \]
\end{defn}

\br
\noindent Definition \ref{d:current} improves the range of \ref{d:ind-a} substantially. A very useful and less obvious way it does so  
may be observed as follows. If $\mk$ is a class of linear orders and $I \in \mk$, and if we are given a function $f$ and a sequence $\mb$ satisfying 
Definition \ref{d:ind-a}, we may extend the domain of $f$ naturally to ${^{\omega>}I}$ by setting 
\begin{equation}
\label{e:same} \bar{b}_{\bar{t}} =  \bar{b}_{t_0} ~^\smallfrown ~\cdots ~ ^\smallfrown \bar{b}_{t_{\ell-1}}         \mbox{ when } \bar{t}  = \langle t_0, \dots, t_{\ell-1} \rangle 
\end{equation}
to generate a sequence $\mb$ satisfying \ref{d:current}. 
However, Definition \ref{d:current} doesn't ask that something like (\ref{e:same}) be true. 
A priori, in \ref{d:current}, 
\begin{equation} \label{e:caveat} 
\bar{b}_{t^\smallfrown s} \mbox{ may not be equal to } \bar{b}_t~^\smallfrown \bar{b}_s. 
\end{equation}
Consider the following family of examples (\ref{d:earlier}), which will require a few definitions.  

\begin{ntn} Given a context $\xc$, $\Dqf(I)$ is the set of quantifier-free types\footnote{It would be more consistent 
with standard notation, if a little less readable, to write $\mathbf{D}_{\operatorname{qf}}(I)$.}
of strictly increasing 
finite sequences of elements of $I$.  
\end{ntn} 

Let us name the set of tuples in $I$ sharing a quantifier-free type ($\inc_n$: \ref{notation14}). 

\begin{defn}
For $I \in \mk$ and $\xr \in \Dqf(I)$, and implicitly $n = n(\xr)$, let 
\[ Q^I_\xr = Q^I_{\xr, n} = \{ \bar{t} : \bar{t} \in \inc_n(I), \tpqf(\bar{t}, \emptyset, I) = \xr \} \]
be the set of realizations of $\xr$ in $I$. 
\end{defn}

\begin{defn}[$\xr$-indiscernible sequence]
\label{d:earlier} \emph{ } 
Suppose we are given a context $\xc$, thus $\mk = \mk_\xc$ and $I = I_\xc$ .  Suppose we are given a theory $T$, a model 
$N \models T$, $A \subseteq N$, a type $\xr \in \Dqf(I)$, and 
a map $f: Q^I_\xr \rightarrow {^{\omega >}N}$.  
For each $\bar{t} \in Q^I_\xr$ 
write $\bar{b}_{\bar{t}}$ for $f(t)$. 
We say 
\[  \mb =  \langle \bar{b}_{\bar{t}} : \bar{t} \in Q^I_\xr  \rangle \] 
is a \emph{$\xr$-indiscernible sequence over $A$} when: 

\begin{enumerate}
\item[\emph{(a)}] for all $\bar{t} \in Q^I_\xr$, $\lgn(\bar{b}_{\bar{t}})$ is finite and constant. 
\item[\emph{(b)}] for all finite $k$, if $\bar{t}_0, \dots, \bar{t}_{k-1},  {\bar{t}^\prime_0}, \dots, {\bar{t}^\prime}_{k-1}  \in Q^I_\xr$ 
and 
\[ \tpqf({\bar{t}_0}~^\smallfrown ~{\bar{t}_1} ~ ^\smallfrown ~ \cdots ~ ^\smallfrown \bar{t}_{k-1}, \emptyset, I) = 
\tpqf({\bar{t}^\prime_0} ~^\smallfrown ~{\bar{t}^\prime_1} ~ ^\smallfrown ~ \cdots  ~ ^\smallfrown  ~ \bar{t}^\prime_{k-1}, \emptyset, I) \] 
then in $N$, or equivalently in the monster model $\mathfrak{C} = \mathfrak{C}_T$, 
\[ \tp_{\tau(T)}({\bar{b}_{{\bar{t}}_0}} ~^\smallfrown ~ {\bar{b}_{{\bar{t}}_1}} ~ ^\smallfrown ~ \cdots ~^\smallfrown ~{\bar{b}_{{\bar{t}}_{k-1}}}, A, \mathfrak{C}) = 
\tp_{\tau(T)}({{\bar{b}_{\bar{t}^\prime_0}}} ~ ^\smallfrown~ {\bar{b}_{\bar{t}^\prime_1}} ~ ^\smallfrown  ~\cdots ~ ^\smallfrown{\bar{b}_{\bar{t}^\prime_{k-1}}}, A, \mathfrak{C}). \]
\end{enumerate}
\end{defn}

\begin{obs}  
Definition \ref{d:earlier} can naturally be considered as a special case of Definition \ref{d:current}. 
\end{obs}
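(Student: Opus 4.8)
The plan is to make the identification explicit through a padding–restriction correspondence. Fix a context $\xc$ (so $I = I_\xc$, $\mk = \mk_\xc$) and $\xr \in \Dqf(I)$ with $n = n(\xr)$. Starting from an $\xr$-indiscernible sequence $\mb = \langle \bar{b}_{\bar{t}} : \bar{t} \in Q^I_\xr \rangle$ over $A$ as in Definition \ref{d:earlier}, I would set $\bar{b}^+_{\bar{t}} := \bar{b}_{\bar{t}}$ for $\bar{t} \in Q^I_\xr$ and $\bar{b}^+_{\bar{t}} := \langle\,\rangle$ (the empty sequence) for all other $\bar{t} \in {}^{\omega>}I$, and claim that $\mb^+ = \langle \bar{b}^+_{\bar{t}} : \bar{t} \in {}^{\omega>}I\rangle$ is $\mk$-indiscernible over $A$ in the sense of Definition \ref{d:current}. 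Conversely, I would observe that the restriction to $Q^I_\xr$ of any $\mk$-indiscernible sequence over $A$ is $\xr$-indiscernible over $A$. Together these two facts say that $\xr$-indiscernible sequences are exactly the restrictions to $Q^I_\xr$ of $\mk$-indiscernible sequences, which is the asserted inclusion of notions.

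For the forward direction, take $\bar{t}_0, \dots, \bar{t}_{k-1}$ and $\bar{t}'_0, \dots, \bar{t}'_{k-1}$ in ${}^{\omega>}I$ satisfying the hypotheses of Definition \ref{d:current}: $\lgn(\bar{t}_\ell) = \lgn(\bar{t}'_\ell)$ for each $\ell < k$, and the two concatenations have the same quantifier-free type in $I$. Restricting that common quantifier-free type to the block of coordinates indexed by $\ell$ (which makes sense because the componentwise lengths agree) yields $\tpqf(\bar{t}_\ell, \emptyset, I) = \tpqf(\bar{t}'_\ell, \emptyset, I)$ for each $\ell$. Since $\bar{t} \in Q^I_\xr$ is equivalent to $\tpqf(\bar{t},\emptyset,I) = \xr$, a condition on the quantifier-free type alone which already forces length $n$ and strict increase, the set $S := \{\ell < k : \bar{t}_\ell \in Q^I_\xr\}$ equals $\{\ell < k : \bar{t}'_\ell \in Q^I_\xr\}$. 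For $\ell \notin S$ we have $\bar{b}^+_{\bar{t}_\ell} = \bar{b}^+_{\bar{t}'_\ell} = \langle\,\rangle$, which vanishes under concatenation, so $\bar{b}^+_{\bar{t}_0}{}^\smallfrown\cdots{}^\smallfrown\bar{b}^+_{\bar{t}_{k-1}}$ is literally the concatenation of the $\bar{b}_{\bar{t}_\ell}$ over $\ell \in S$ (in increasing order of $\ell$), and likewise on the primed side. The desired equality of $\tau(T)$-types over $A$ is then precisely clause (b) of Definition \ref{d:earlier} applied to $\langle \bar{t}_\ell : \ell \in S\rangle$ and $\langle \bar{t}'_\ell : \ell \in S\rangle$, whose concatenated quantifier-free types agree, being the appropriate coordinate-restriction of the agreeing full ones.

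The converse direction is easier still: on $Q^I_\xr$ every tuple has length $n$, so the componentwise length hypothesis in Definition \ref{d:current} is automatic and clause (b) of Definition \ref{d:earlier} becomes a verbatim instance of the type-equality there, while clause (a) of Definition \ref{d:earlier} follows because Definition \ref{d:current} with $k = 1$ forces $\lgn(\bar{b}_{\bar{t}})$ to depend only on $\tpqf(\bar{t},\emptyset,I)$, which is constantly $\xr$ on $Q^I_\xr$. There is no serious obstacle here; the only point requiring care is that the padding be quantifier-free-type invariant and introduce no spurious coincidences among coordinates of the concatenations, and both are guaranteed by padding with the empty sequence together with the fact that $Q^I_\xr$ is a single quantifier-free type class. (If one insists on nonempty entries, one can instead pad by a fixed tuple enumerating a finite subset of $A$ when $A \neq \emptyset$, at the cost of a marginally longer verification using automorphisms of $\mathfrak{C}$ fixing $A$ pointwise.)
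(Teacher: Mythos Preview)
Your proposal is correct and takes essentially the same approach as the paper: the paper's proof is the single sentence ``Extend $f$ in \ref{d:earlier} to ${^{\omega >}I}$ by setting $f(u) = \emptyset$ for all $u \in {^{\omega>}I} \setminus Q^I_\xr$,'' which is exactly your padding-by-empty-sequence construction. You have supplied the verification the paper omits and added a converse direction, neither of which the paper bothers with, but the core idea is identical.
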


\begin{proof}
Extend $f$ in \ref{d:earlier} to ${^{\omega >}I}$ by setting $f(u) = \emptyset$ for all $u \in {^{\omega>}I} \setminus Q^I_\xr$. 
\end{proof}

As another example, $\mk$-indiscernible sequences arise naturally in $\GEM$ models. 

\begin{expl}
For any context $\mc = (I, \mk)$ and any $M = \GEM(I, \Phi, \ma)$, the template $\Phi$ determines
an $f$ showing that the skeleton $\ma$ is a 
$\mk$-indiscernible sequence. 
\end{expl}

\begin{rmk}
In the example of a skeleton, of course, equation (\ref{e:same}) above does hold; see also convention \ref{c53}.
\end{rmk}

So far we have been careful to write $\bar{t}$ for finite tuples from $I$ of length possibly $>1$, as distinguished from singletons $t \in I$, in order to clearly make the point in 
equation (\ref{e:caveat}), p. \pageref{e:caveat} above.  However, for the remainder of the paper, it will greatly simplify readability to 
also allow $s,t$ to range over elements of 
$\inc(I)$.  

\begin{conv}[Dropping some overlines]
Beginning in $\S \ref{s:wd}$ and to the end of the paper, unless otherwise stated, we allow $s,t$ to range over elements of 
${^{\omega>}I}$, not just $I$. For example, referring to sequences as in $\ref{d:earlier}$, we may write 
\[  \mb =  \langle \bar{b}_{{t}} : {t} \in Q^I_\xr  \rangle \] 
when $n = n(\xr)$ is not necessarily $1$.   $($This convention doesn't mean we won't continue to use overlines; it just means that the 
lack of an overline doesn't mean the length is $1$.$)$
\end{conv} 

Classically in model theory, a main use of indiscernible sequences is in the definition of dividing, 
and so we may expect that the more robust notion of indiscernible sequence 
would give us a more finely calibrated notion of dividing. This will be developed in \S \ref{s:fd}, after a section which may  
justify some particulars of that definition.

\vspace{5mm}

\section{Weak definability and saturation}  \label{s:wd}
\setcounter{theoremcounter}{0}

Developing an idea from \cite{MiSh:1124} \S 9,\footnote{the reader does not need to have seen that paper to follow the present section.} 
this section shows that $\GEM$-models reveal a useful weakening of the phenomenon of definability of types from stable theories. 
Moreover, we will see that existence of these weak definitions may be characterized in terms of realization and omission of types 
in extensions of the given $\GEM$ model, and so is tightly connected to the problem of building saturated models in this setup.

\begin{conv} \label{c53}
When $M = \GEM(I, \Phi)$ with skeleton $\ma = \langle \bar{a}_t : t \in I \rangle$, then whenever $\bar{t} = \langle t_0, \dots, t_{k-1} \rangle \in \inc(I)$, 
\[ \bar{a}_{\bar{t}} \mbox{ \emph{ abbreviates } } \bar{a}_{t_0} ~ ^\smallfrown~ \bar{a}_{t_1} ~ ^\smallfrown  ~\cdots ~ ^\smallfrown  \bar{a}_{t_{k-1}}. \]
\end{conv}

\vspace{4mm}

\noindent 

\begin{disc} \label{disc:enum}
\emph{To motivate the first main definition of the section, Definition \ref{d:gd}, suppose we are given a context $\xc = (I, \mk)$, 
a complete theory $T$, and $M = \GEM(I, \Phi) \models T$ with skeleton $\ma = \langle \bar{a}_t : t \in I \rangle$. Suppose 
$p \in \ts_{\tau(T)}(M)$ is a type or a partial type, so we may enumerate it as} 
\begin{equation}
\label{enum-1}
\langle \vp_\alpha(x,\bar{b}_\alpha) : \alpha < \kappa \rangle
\end{equation}
\emph{for some $\kappa$ depending on $p$. Since we are 
in a $\GEM$ model, we may write a more informative version of (\ref{enum-1}),}  
\begin{equation}
\label{eq:enum} \langle \vp_\alpha(x, \bar{\sigma}_\alpha (\bar{a}_{\bar{t}_\alpha})) ~:~ \alpha < \kappa \rangle 
\end{equation}
\emph{where each $\bar{\sigma}_\alpha$ abbreviates some finite sequence of $\tau(\Phi)$-terms 
$\langle \sigma_\ell (\bar{y}_0, \dots, \bar{y}_{n-1}) : \ell <  m(\alpha) \rangle$, and 
$\bar{t}_{\alpha} = \langle t_0, \dots, t_{n-1} \rangle \in \inc_n(I)$, so 
$\bar{a}_{\bar{t}_{\alpha}}$ is a sequence 
from the skeleton; thus,  $\bar{\sigma}_\alpha (\bar{a}_{\bar{t}_\alpha})$ abbreviates 
$\langle \sigma_\ell(\bar{a}_{\bar{t}_\alpha}) : \ell < m(\alpha) \rangle$.   [In order to evaluate this expression, it should of 
course be the case that for each $i < n$, $\lgn(\bar{a}_{t_i}) = \lgn(\bar{y}_i)$.]
 The choice of $\bar{\sigma}_{\alpha}$, $\bar{a}_{\bar{t}_{\alpha}}$ need not be unique; any choice with the property that 
$\langle \sigma_\ell(\bar{a}_{\bar{t}_{\alpha}}) : \ell < m(\alpha) \rangle$ 
evaluates correctly in $M$ to $\bar{b}_\alpha$, will do.} 
\end{disc}

\begin{defn} \label{d:detailed}
Given a context $\xc = (I, \mk)$, $M = \GEM(I, \Phi) = \GEM(I, \Phi, \ma)$ and a type $p \in \ts(M)$, 
call any enumeration of $p$ satisfying $(\ref{eq:enum})$ of Discussion $\ref{disc:enum}$ a \emph{detailed enumeration}. 
\end{defn}

\begin{disc}
\emph{Continuing \ref{disc:enum}, each item in the sequence $(\ref{eq:enum})$ has three natural ingredients: 
the formula $\vp_\alpha$, the sequence of $\tau(\Phi)$-terms $\bar{\sigma}_\alpha$, and 
$\xr_\alpha = \qftp(\bar{t}_\alpha, \emptyset, I)$. The move from $\bar{t}_\alpha$ to its quantifier-free type $\xr_\alpha$ potentially loses 
information.  Our question is whether this is serious, i.e. whether there is a partial function} 
\begin{equation} 
\label{eq:f}
F : (\mbox{ $\tau(T)$-formulas }) \times (\mbox{ finite sequences of $\tau(\Phi)$-terms }) \times \Dqf(I)  \longrightarrow \{ 0, 1 \} 
\end{equation}
\emph{such that given any $J$ with $I \subseteq J \in \mk$, the set of formulas} 
\begin{equation} \label{eq:f2}
\{  \vp(x, \bar{\sigma}(\bar{a}_{\bar{t}}))^{\ii} :  (\vp, \bar{\sigma}, \xr) \in \dom(F),  ~\bar{t} \in \inc(J), \tpqf(\bar{t}, \emptyset, J) = \xr, 
~F( \vp, \bar{\sigma}, \xr) = \ii \}
\end{equation}
\emph{when evaluated in $N = \GEM(J, \Phi)$, is consistent and extends $p$.} 
\end{disc}

We will formally define such functions $F$ in \ref{d:gd} below after a few additional remarks and adjustments. 

First, why do we consider all larger $J$'s?  The deeper answer will be that, just as the usual definability of types is most useful in controlling extensions of the 
given type to larger models,  here we will use $F$ in applications of  
\ref{d:ramsey-exp}, which will require $J$ to be sufficiently saturated. The simpler, initial answer is that for many natural $I$, restricting to $I = J$ gives $F$ trivially, as the 
next example explains.  

\begin{defn} \label{d:sep} 
Let $\xc = (I, \mk)$ be a context.  We say $I$ is \emph{separated} when $s \neq t \in I$ implies 
$\tpqf(s, \emptyset, I) \neq \tpqf(t, \emptyset, I)$.
\end{defn}

An example of \ref{d:sep} which played a key role in {\cite{MiSh:1124} \S 5}: for a given infinite $\mu$, $\mk_\mu$ is the class of linear orders 
expanded by $\mu$ unary predicates which partition the domain, which is known to be an index model class. A separated $I \in \mk_\mu$ is one in which each element of $I$ has its own color. 

\begin{rmk} 
When $I$ is separated, each $\bar{t} \in \inc(I)$ is the unique realization of its quantifier-free type 
$\xr = \tpqf(\bar{t}, \emptyset, I)$, so for the case $I = J$, a function $F$ following $(\ref{eq:f})$ exists trivially, and the more  
interesting question concerns $J \supseteq I$. 
\end{rmk}

One more example will explain the appearance of the finite $\bar{t}_*$ in Definition \ref{d:gd}. 

\begin{expl} \label{ex-orders}
\emph{Let $\mk$ be the class of infinite linear orders, and $I = (\mathbb{Q}, <)$. Let $T$ be the theory of an equivalence relation with infinitely 
many infinite classes. Choose $M = \GEM(I, \Phi, \ma)$ to be a countable model with $\ma = \langle a_t : t \in I \rangle$ a sequence of elements from 
distinct equivalence classes. By our assumption \ref{c:nice}, there are Skolem functions for $T$, say, $\langle f_i : i < \omega \rangle$ in $\tau(\Phi)$ 
interpreted so that $\langle f^M_i(a_t) : i < \omega \rangle$ enumerates the equivalence class of ${a}_t$.   Let $b$ be any element of $M$ 
and let $p$ be the partial type $\{ E(x,b) \}$. Then we may choose a detailed enumeration of $p$, say, 
\[ p = \langle~ E(x,f_i({a}_{t_*}))~ \rangle \]
for some $i = i_p <\omega$ and some $t_* = t_p \in I$. But since any two $t, t^\prime$ in $I$ have the same quantifier-free type, 
no function $F$ satisfying (\ref{eq:f})-(\ref{eq:f2}) above exists.    This is easily solved by allowing $F$ to depend on some finite sequence from $I$, 
here $t_*$.} 
\end{expl}

\begin{defn} 
For $\bar{t}_* \in \inc(I)$, let $\Dqf(I, \bar{t}_*)$ denote the set of quantifier-free types over $\bar{t}_*$ of strictly increasing finite sequences 
of elements of $I$, i.e. 
\[ \Dqf(I, \bar{t}_*) = \{   \tpqf(\bar{t}, \bar{t}_*, I) : \bar{t} \in \inc(I) \}. \]  
\end{defn}

\vspace{4mm}

We arrive at the main definition of the section. 

\begin{defn}[Weakly definable type] \label{d:gd}
Suppose we are given a context $\xc = (I, \mk)$, a complete theory $T$,  $M = \GEM(I, \Phi) = \GEM(I, \Phi, \ma) \models T$ and a partial type or type $p \in \ts_{\tau(T)}(M)$, of $\ml(\tau_T)$. 
Say $p$ is \emph{weakly definable} when there exist 
\begin{enumerate}
\item[(a)] a detailed enumeration 
$\langle \vp_\alpha(x, \bar{\sigma}_\alpha (\bar{a}_{\bar{t}_\alpha})) ~:~ \alpha < \kappa \rangle$ of $p$, where $\kappa = |p|$, 
\item[(b)] a finite sequence $\bar{t}_* \in \inc(I)$,  
\item[(c)] a partial function $F$ depending on $\bar{t}_*$, such that 
\[ F : \{ \bar{t}_* \} \times  (\mbox{ $\tau(T)$-formulas }) \times (\mbox{ finite sequences of $\tau(\Phi)$-terms }) \times~ \Dqf(I, \bar{t}_*)  \longrightarrow \{ 0, 1 \} \]
and for some $\aleph_0$-saturated $J$ with $I \subseteq J \in \mk$, the set of formulas
\[ q = \{  \vp(x, \bar{\sigma}(\bar{a}_{\bar{t}}))^{F( \bar{t}_*, \vp, \bar{\sigma}, \xr)} :  (\bar{t}_*, \vp, \bar{\sigma}, \xr) \in \dom(F),  ~\bar{t} \in \inc(J), \tpqf(\bar{t}, \bar{t}_*, J) = \xr \} \] 
when evaluated in $N = \GEM(J, \Phi)$, is consistent and extends $p$.
\end{enumerate} 
\end{defn}

\begin{ntn}
In the context of $\ref{d:gd}$, we may also write``$p$ is weakly definable over $\bar{t}_*$'' or ``$p$ has a weak definition over $\bar{t}_*$'' to emphasize the choice of the finite $\bar{t}_*$.
\end{ntn}

\begin{obs} \label{o:larger}
If $p$ is a partial type of $\GEM(I, \Phi)$ and is weakly definable, then $p$ remains weakly definable in $\GEM(I, \Psi)$ for any 
$\Psi$ with $\Phi \leq \Psi \in \Upsilon[T]$, as witnessed by the same $\bar{t}_*$ and $F$. 
\end{obs}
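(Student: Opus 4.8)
\textbf{Proof proposal for Observation \ref{o:larger}.}

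The plan is to unwind the definition of weak definability (Definition \ref{d:gd}) and observe that every ingredient transfers verbatim along $\leq_{\Upsilon[T]}$, using only the compatibility built into Definition \ref{d:order}. Suppose $p \in \ts_{\tau(T)}(\GEM(I,\Phi))$ is weakly definable, witnessed by a detailed enumeration $\langle \vp_\alpha(x, \bar{\sigma}_\alpha(\bar{a}_{\bar{t}_\alpha})) : \alpha < \kappa \rangle$, a finite $\bar{t}_* \in \inc(I)$, a partial function $F$, and some $\aleph_0$-saturated $J$ with $I \subseteq J \in \mk$ such that the associated set of formulas $q$ is consistent and extends $p$ when evaluated in $\GEM(J,\Phi)$. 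Fix $\Psi$ with $\Phi \leq_{\Upsilon[T]} \Psi$; note $\tau(\Phi) \subseteq \tau(\Psi)$, so in particular all the $\tau(\Phi)$-terms $\bar{\sigma}_\alpha$ appearing in the enumeration and all the $\tau(\Phi)$-terms in $\dom(F)$ are also $\tau(\Psi)$-terms, and the skeleton indexed by $I$ (resp.\ $J$) in $\GEM(I,\Psi)$ (resp.\ $\GEM(J,\Psi)$) can be taken to be the same sequence of index elements, so the notation $\bar{a}_{\bar{t}}$ is unambiguous.

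First I would check that the same enumeration is still a detailed enumeration of $p$, now viewed inside $M' := \GEM(I,\Psi)$. By Definition \ref{d:order}, $\GEM(I,\Phi) \subseteq \GEM(I,\Psi)$ and $\GEM_{\tau(\Phi)}(I,\Phi) \preceq \GEM_{\tau(\Phi)}(I,\Psi)$; since $p$ is a type in $\ml(\tau(T))$ and $\tau(T) \subseteq \tau(\Phi)$, the restriction clause guarantees that $p$, as computed over $M$, is the same partial type when read in $M'$, and each $\tau(\Phi)$-term $\bar{\sigma}_\alpha$ evaluates in $M'$ exactly as in $M$ on elements of the (common) skeleton. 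Hence $\langle \vp_\alpha(x, \bar{\sigma}_\alpha(\bar{a}_{\bar{t}_\alpha})) : \alpha < \kappa \rangle$ is a detailed enumeration of $p$ in $M'$, giving clause (a); clause (b) is literally the same $\bar{t}_*$; and the domain/codomain specification of $F$ in clause (c) is unchanged since $\Dqf(I,\bar{t}_*)$ depends only on the index model class, not on $\Phi$.

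It remains to verify the consistency-and-extension requirement in clause (c) for $\Psi$. I would take the same $\aleph_0$-saturated $J$ and form
\[ q' = \{ \vp(x, \bar{\sigma}(\bar{a}_{\bar{t}}))^{F(\bar{t}_*, \vp, \bar{\sigma}, \xr)} : (\bar{t}_*, \vp, \bar{\sigma}, \xr) \in \dom(F), \ \bar{t} \in \inc(J), \ \tpqf(\bar{t},\bar{t}_*,J) = \xr \}, \]
which is syntactically identical to the original $q$ (the formulas $\vp$ are $\tau(T)$-formulas, the terms are $\tau(\Phi)$-terms, all unchanged). Applying Definition \ref{d:order} again with $J$ in place of $I$: $\GEM(J,\Phi) \subseteq \GEM(J,\Psi)$ and $\GEM_{\tau(\Phi)}(J,\Phi) \preceq \GEM_{\tau(\Phi)}(J,\Psi)$. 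Since every formula in $q' = q$ lies in $\ml(\tau(\Phi))$ (indeed in $\ml(\tau(T))$ with parameters named by $\tau(\Phi)$-terms), the truth value of any finite conjunction from $q$ in $\GEM(J,\Psi)$ agrees, via the elementary-submodel clause, with its truth value in $\GEM(J,\Phi)$; thus consistency of $q$ evaluated in $\GEM(J,\Phi)$ yields consistency of $q$ evaluated in $\GEM(J,\Psi)$, and likewise "$q$ extends $p$" transfers because both $p$ and $q$ are sets of $\ml(\tau(T))$-formulas over the common skeleton. Therefore the same $\bar{t}_*$ and $F$ witness that $p$ is weakly definable in $\GEM(I,\Psi)$.

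The only point requiring any care — and the step I expect to be the main (minor) obstacle — is the bookkeeping that the skeletons of $\GEM(\cdot,\Phi)$ and $\GEM(\cdot,\Psi)$ can be chosen compatibly (so that "$\bar{a}_{\bar{t}}$" means the same tuple in both), which is exactly what the containment $\GEM(I,\Phi) \subseteq \GEM(I,\Psi)$ in Definition \ref{d:order} is set up to provide, together with Convention \ref{c:nice}(b)(4) on uniqueness; once that is pinned down, everything else is a direct application of the elementary-submodel clause of $\leq_{\Upsilon[T]}$. No genuinely new construction is needed: weak definability is preserved upward because all of its data (enumeration, $\bar{t}_*$, $F$, witnessing $J$, witnessing set $q$) is expressed purely in $\tau(\Phi)$-vocabulary, over which $\Phi$ and $\Psi$ agree elementarily.
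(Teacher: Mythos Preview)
Your proposal is correct and is exactly the unwinding the paper has in mind; the paper states this as an Observation without proof, treating it as immediate from the definitions, and your argument supplies the routine verification that all the data of weak definability live in $\tau(\Phi)$ and hence transfer along $\Phi \leq_{\Upsilon[T]} \Psi$ via the elementary-submodel clause of Definition~\ref{d:order}.
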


\begin{obs} \label{o:corresp}
\emph{In the context of $\ref{d:gd}$, note that it follows from the definition of $\GEM$-model that if $p$ has a weak definition over some $\bar{t}_* \in \inc(I)$, 
and if ${\bar{s}^*}$ is any other sequence from $J$ with $\tpqf({\bar{s}^*}, \emptyset, J) = \tpqf(\bar{t}_*, \emptyset, J)$,  then 
the set of formulas}
\[ \{  \vp(x, \bar{\sigma}(\bar{a}_{\bar{t}}))^{F( {\bar{s}^*}, \vp, \bar{\sigma}, \xr)} :  ({\bar{s}^*}, \vp, \bar{\sigma}, \xr) \in \dom(F),  ~\bar{t} \in \inc(J), \tpqf(\bar{t}, {\bar{s}^*}, J) = \xr \} \]
\emph{when evaluated in $N = \GEM(J, \Phi)$, is consistent.  Moreover, this consistent set of formulas 
extends a natural analogue of $p$, namely, the type obtained by 
replacing every occurrence of $\bar{t}_*$ in the given detailed enumeration of $p$ by ${\bar{s}^*}$.} 
\end{obs}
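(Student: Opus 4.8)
The plan is to exhibit the displayed set $q'$ (evaluated in $N=\GEM(J,\Phi)$) as a conjugate copy, on each finite subset, of the set $q$ produced by Definition~\ref{d:gd} with $\bar t_*$ in the role of $\bar s^*$; recall $q$ is consistent and contains $p$, and that the $\aleph_0$-saturated model witnessing this may be taken to be the very $J$ in which $\bar s^*$ lives. The conjugacy will come from the hypothesis $\tpqf(\bar s^*,\emptyset,J)=\tpqf(\bar t_*,\emptyset,J)$. I will use throughout the identification this hypothesis induces between quantifier-free types over $\bar t_*$ and quantifier-free types over $\bar s^*$: it is under this identification that a $\xr\in\Dqf(I,\bar t_*)$ is being compared with $\tpqf(\bar t,\bar s^*,J)$ in the statement of $q'$, and that ``$F(\bar s^*,\vp,\bar\sigma,\xr)$'' abbreviates ``$F(\bar t_*,\vp,\bar\sigma,\xr)$''.

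The engine is a transport lemma drawn from the homogeneity half of ``$\aleph_0$-saturated'' for $J$: for any finitely many $\bar t^{\,1},\dots,\bar t^{\,n}\in\inc(J)$ there are $\bar r^{\,1},\dots,\bar r^{\,n}\in\inc(J)$ with
\[ \tpqf(\bar r^{\,1}\,{}^\smallfrown\cdots{}^\smallfrown\,\bar r^{\,n}\,{}^\smallfrown\,\bar t_*,\ \emptyset,\ J)\;=\;\tpqf(\bar t^{\,1}\,{}^\smallfrown\cdots{}^\smallfrown\,\bar t^{\,n}\,{}^\smallfrown\,\bar s^*,\ \emptyset,\ J). \]
One extends the finite partial isomorphism $\bar s^*\mapsto\bar t_*$ of $J$, one element at a time, until its domain contains every entry of the $\bar t^{\,j}$, and lets $\bar r^{\,j}$ be the image of $\bar t^{\,j}$ (still strictly increasing, partial isomorphisms preserving $<$). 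Then $\tpqf(\bar r^{\,j},\bar t_*,J)$ equals $\tpqf(\bar t^{\,j},\bar s^*,J)$ under the identification above, and $\tpqf(\bar r^{\,1}{}^\smallfrown\cdots{}^\smallfrown\bar r^{\,n},\emptyset,J)=\tpqf(\bar t^{\,1}{}^\smallfrown\cdots{}^\smallfrown\bar t^{\,n},\emptyset,J)$.

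With this, I would first check $q'$ is consistent, i.e. that every finite $q'_0\subseteq q'$ is realized in $\mathfrak{C}_T$. List the skeleton index tuples $\bar t^{\,1},\dots,\bar t^{\,n}\in\inc(J)$ occurring in the parameters of $q'_0$, transport them to $\bar r^{\,1},\dots,\bar r^{\,n}$, and let $q_0$ be the result of replacing each $\bar a_{\bar t^{\,j}}$ by $\bar a_{\bar r^{\,j}}$ in $q'_0$; the transport lemma plus $F(\bar t_*,\cdot)=F(\bar s^*,\cdot)$ give $q_0\subseteq q$, so $q_0$ is consistent. Since the skeleton of $N$ is indiscernible \emph{in the full language $\tau_\Phi$} (Convention~\ref{c:nice}(b)(2)), the equality $\tpqf(\bar t^{\,1}{}^\smallfrown\cdots{}^\smallfrown\bar t^{\,n},\emptyset,J)=\tpqf(\bar r^{\,1}{}^\smallfrown\cdots{}^\smallfrown\bar r^{\,n},\emptyset,J)$ forces $\bar a_{\bar t^{\,1}}{}^\smallfrown\cdots{}^\smallfrown\bar a_{\bar t^{\,n}}$ and $\bar a_{\bar r^{\,1}}{}^\smallfrown\cdots{}^\smallfrown\bar a_{\bar r^{\,n}}$ to have the same $\tau_\Phi$-type over $\emptyset$ in $N$, whence the parameter tuples $\langle\bar\sigma(\bar a_{\bar t^{\,j}})\rangle$ and $\langle\bar\sigma(\bar a_{\bar r^{\,j}})\rangle$ appearing in $q'_0$ and $q_0$ have the same $\tau(T)$-type over $\emptyset$; take $\Theta\in\aut(\mathfrak{C}_T)$ sending the latter to the former. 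Then $\Theta$ carries $q_0$ onto $q'_0$ formula by formula (the $\tau(T)$-formulas and the exponents unchanged), so a realization of the consistent finite set $q_0$ maps to one of $q'_0$. For the ``moreover'', take $p'$ to be the enumeration obtained from the given detailed enumeration of $p$ by transporting every index tuple $\bar t_\alpha$ along $\bar t_*\mapsto\bar s^*$ (well defined by the transport lemma); since $p\subseteq q$, for each $\alpha$ the triple $(\bar t_*,\vp_\alpha,\bar\sigma_\alpha,\xr_\alpha)$ lies in $\dom(F)$ with value $1$, hence so does $(\bar s^*,\vp_\alpha,\bar\sigma_\alpha,\xr_\alpha)$, and the transported tuple $\bar t'_\alpha$ witnesses $\vp_\alpha(x,\bar\sigma_\alpha(\bar a_{\bar t'_\alpha}))\in q'$; thus $p'\subseteq q'$.

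No deep obstacle is expected; the point demanding care is the handling of languages and identifications. One must use that the skeleton is indiscernible in $\tau_\Phi$ and not merely in $\tau(T)$, so that a $\tau(T)$-formula carrying the $\tau(\Phi)$-term parameters $\bar\sigma(\bar a_{\bar t^{\,j}})$ may be pushed through the conjugacy; and one must apply the $\bar t_*\leftrightarrow\bar s^*$ identification of quantifier-free types uniformly — inside $\dom(F)$, in the clause ``$\tpqf(\bar t,\bar s^*,J)=\xr$'', and in the parameters of the analogue $p'$ — so that ``$q'$ extends $p'$'' is genuine membership of formulas rather than mere equivalence of partial types.
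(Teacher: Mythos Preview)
Your argument is correct and is precisely the unpacking the paper has in mind: in the paper this is stated as an observation with no proof beyond ``it follows from the definition of $\GEM$-model,'' and the two ingredients you isolate---$\aleph_0$-homogeneity of $J$ to transport finite configurations from the $\bar s^*$-side to the $\bar t_*$-side, and full $\tau_\Phi$-indiscernibility of the skeleton (Convention~\ref{c:nice}(b)(2)) to conclude the corresponding parameter tuples have the same type in $N$---are exactly what makes that immediate. Your care about the identification of $\Dqf(I,\bar t_*)$ with types over $\bar s^*$, and about working in $\tau_\Phi$ before restricting to $\tau(T)$, is well placed.
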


\begin{rmk}
In Definition $\ref{d:gd}$ the particular choice of $J$ will not matter, only that it is $\aleph_0$-saturated and extends $I$.  
We could have stated the definition for some, equivalently every, $\aleph_0$-saturated $J \supseteq I$ from $\mk$. 
\end{rmk}

\begin{disc}
\emph{ In Definition \ref{d:gd}, existence of a weak definition depends on $I, \mk, \Phi$, 
not only on the type. 
We might also say it is the extension $q$ of $p$ which has the weak definition.  When such a weak definition exists, 
then for each $J$ the extension $q \supseteq p$ we obtain is unique.  (We aren't asserting this is  
independent of the choice of  
the detailed enumeration, and we have also left open the possibility of varying the domain of $F$ to include e.g. formulas not used in $p$ -- 
but once $F$ is given, for each larger $J$ there is no ambiguity.) It may not be a complete type over $N$, since in $J$ there 
may be many $\bar{t}$'s which do not realize 
any type in $\Dqf(I)$ and so are never used, for example, 
if $I$ is separated and $J$ is $\aleph_0$-saturated, $J$ will contain many finite sequences in which distinct 
elements have the same quantifier-free 1-types, and these have no analogue in $I$.  However, if $I$ is $\aleph_0$-saturated, then $q$ will be a complete type. }
\end{disc}

\br

\begin{claim}[Definable implies weakly definable, for formulas] \label{c:stable-def}
Suppose $\xc = (I, \mk)$ is a context, $T$ a complete theory,  $M = \GEM(I, \Phi) = \GEM(I, \Phi, \ma) \models T$. 
\begin{enumerate}
\item[(a)] Suppose $\Delta = \{ \vp, \neg \vp \}$ for some stable formula $\vp$ of $T$. Any type $p \in \ts_{\Delta}(M)$ 
has a weak definition over some finite $\bar{t}_* \in \inc(I)$. 
\item[(b)]  In the previous item, $\Delta$ may be of any finite size as long as it contains only formulas which are stable in $T$. 
\end{enumerate}
\end{claim}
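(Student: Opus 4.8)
The plan is to invoke the local theory of stable formulas to obtain a definition of $p$ of a fixed syntactic shape with parameters in $M$, and then to read off the weak definition $F$ from it, using that in a $\GEM$-model those parameters are $\tau(\Phi)$-terms applied to finitely many skeleton elements. Recall the relevant standard facts about a stable formula $\vp = \vp(x,\bar{y})$ of $T$: there is a single formula $\psi_\vp(\bar{y};\bar{z})$ which is a finite Boolean combination of instances $\vp(z_i,\bar{y})$ (in particular a $\tau(T)$-formula), with $\bar{z}$ of length bounded in terms of $\vp$ and $T$, such that for every $N \models T$, every $M \preceq N$, and every complete $\vp$-type $p$ over $M$, there is $\bar{c} \in M$ with $\vp(x,\bar{b}) \in p \iff M \models \psi_\vp(\bar{b};\bar{c})$ for all $\bar{b}$ from $M$; and moreover the set $\{\vp(x,\bar{b})^{\varepsilon} : \bar{b} \text{ from } N,\ \varepsilon = 1 \iff N \models \psi_\vp(\bar{b};\bar{c})\}$ is a consistent complete $\vp$-type over $N$ extending $p$ (the definable, equivalently nonforking, extension). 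Apply this with $M = \GEM(I,\Phi,\ma) \models T$, with $N = \GEM(J,\Phi)$ for any $\aleph_0$-saturated $J \in \mk$ with $I \subseteq J$ (such $J$ exists by \ref{d:imc}(5), and then $M \preceq N$ by Convention~\ref{c:nice}), and with the given $p \in \ts_\Delta(M)$ viewed as a complete $\vp$-type over $M$: this yields a defining parameter $\bar{c} \in M$.

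Since $\ma$ generates $M$ (Definition~\ref{d:t-e59}), write $\bar{c} = \bar{\sigma}_*(\bar{a}_{\bar{t}_*})$ for a finite tuple $\bar{\sigma}_*$ of $\tau(\Phi)$-terms and some $\bar{t}_* \in \inc(I)$, obtained by collecting the finitely many relevant skeleton indices and sorting them into increasing order; this $\bar{t}_*$, whose length is bounded since $\bar{z}$ is, will be the finite sequence required in~\ref{d:gd}(b). Fix any detailed enumeration of $p$, and define the partial function $F$ on $\{\bar{t}_*\} \times \{\vp,\neg\vp\} \times (\text{finite tuples of }\tau(\Phi)\text{-terms}) \times \Dqf(I,\bar{t}_*)$ by declaring $F(\bar{t}_*,\vp,\bar{\sigma},\xr) = 1$ precisely when $M \models \psi_\vp(\bar{\sigma}(\bar{a}_{\bar{t}'});\bar{c})$ for some, equivalently any, $\bar{t}' \in \inc(I)$ with $\tpqf(\bar{t}',\bar{t}_*,I) = \xr$ (and dually on $\neg\vp$, with $\psi_\vp$ replaced by its negation). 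This is well defined: the $\tau(\Phi)$-type of $\bar{\sigma}(\bar{a}_{\bar{t}'}) ~^\smallfrown~ \bar{c}$ in $M$ depends, via the template and full indiscernibility of the skeleton (Convention~\ref{c:nice}), only on $\tpqf(\bar{t}' ~^\smallfrown~ \bar{t}_*,\emptyset,I)$, which in turn is determined by $\xr$ together with the fixed type $\tpqf(\bar{t}_*,\emptyset,I)$, and $\psi_\vp$ is a $\tau(T) \subseteq \tau(\Phi)$-formula.

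It remains to check that the associated set $q$ of~\ref{d:gd}, evaluated in $N$, extends $p$ and is consistent. For ``extends $p$'': each formula of $p$ has the shape $\vp(x,\bar{\sigma}(\bar{a}_{\bar{t}}))^{\varepsilon}$ with $\bar{t} \in \inc(I) \subseteq \inc(J)$; setting $\xr = \tpqf(\bar{t},\bar{t}_*,I) = \tpqf(\bar{t},\bar{t}_*,J)$, the fact that $\bar{c}$ defines $p$ in $M$ gives $\varepsilon = F(\bar{t}_*,\vp,\bar{\sigma},\xr)$, so the same formula lies in $q$. For consistency: by the template together with $M \preceq N$, for every $\bar{t} \in \inc(J)$ with $\tpqf(\bar{t},\bar{t}_*,J) = \xr$ the $\tau(\Phi)$-type of $\bar{\sigma}(\bar{a}_{\bar{t}}) ~^\smallfrown~ \bar{c}$ in $N$ coincides with that of $\bar{\sigma}(\bar{a}_{\bar{t}'}) ~^\smallfrown~ \bar{c}$ in $M$ for $\bar{t}'$ as above, whence $N \models \psi_\vp(\bar{\sigma}(\bar{a}_{\bar{t}});\bar{c}) \iff F(\bar{t}_*,\vp,\bar{\sigma},\xr) = 1$; thus every formula of $q$ belongs to the consistent complete $\vp$-type over $N$ defined by $\psi_\vp(\bar{y};\bar{c})$, so $q$ is consistent. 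This proves (a). For (b), with $\Delta = \{\vp_0,\dots,\vp_{m-1}\}$ all stable, run the argument in each coordinate: $p\restriction\vp_i$ is defined by $\psi_{\vp_i}(\bar{y}_i;\bar{c}_i)$ with $\bar{c}_i = \bar{\sigma}_*^i(\bar{a}_{\bar{t}_*^i}) \in M$; let $\bar{t}_*$ be a finite increasing tuple enumerating all the $\bar{t}_*^i$, and define $F$ by the same recipe, using $\psi_{\vp_i}$ for formulas built from $\vp_i$. The only new point is joint consistency of $q = \bigcup_i q_i$ in $N$: each $q_i$ is contained in the definable extension to $N$ of $p\restriction\vp_i$, and the union of these definable extensions is the unique nonforking (heir) extension of the complete $\Delta$-type $p$ to $N$, hence consistent by the basic theory of finite sets of stable formulas.

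I expect the genuinely nontrivial input to be the cited local-stability fact that the definable extension of $p$ to the larger $\GEM$-model $N$ is consistent; this is the only place where stability of the formulas is used, and in case (b) it is the assertion that the coordinatewise definable extensions of a complete finite-$\Delta$-type over a model (all formulas stable) remain jointly consistent. The remainder is bookkeeping, and the care needed there is in verifying that $F$ depends only on the quantifier-free data $(\bar{t}_*,\vp_i,\bar{\sigma},\xr)$ and not on chosen representatives --- which rests on quantifier-free (indeed full) indiscernibility of the skeleton and the template clauses of Convention~\ref{c:nice} --- and that passing from $I$ to $J$ leaves the relevant quantifier-free types unchanged.
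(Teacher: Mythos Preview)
Your proof is correct and follows essentially the same approach as the paper: pull the defining formula $\theta(\bar{y},\bar{c})$ (your $\psi_\vp$) from local stability theory, write $\bar{c}$ as terms in finitely many skeleton elements to get $\bar{t}_*$, and use indiscernibility of the skeleton over $\bar{t}_*$ to verify that the resulting $F$ gives a consistent extension in $\GEM(J,\Phi)$. The paper's treatment of (b) is terser---it simply says definitions work formula-by-formula and concatenates the $\bar{t}_*$'s---while you make explicit the point that joint consistency of the $q_i$ follows from uniqueness of the nonforking extension of a $\Delta$-type for finite stable $\Delta$; this is a welcome clarification but not a different argument.
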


\begin{proof} 
Since definitions operate formula-by-formula and the concatenation of finitely many finite $\bar{t}_*$'s is still finite, it will suffice to prove case (a). So let us assume $\Delta = \{ \vp(x,\bar{y}), \neg \vp(x,\bar{y}) \}$ where $\vp$ is a stable formula, and 
$\ell(x)$ need not be 1. 

As $\vp$ is stable and $M$ is a model, there is a formula $\theta = \theta(\bar{y}, \bar{z})$ and a sequence of elements 
$\bar{c} \in {^{\lgn(\bar{z})}M}$ such that for all $\bar{b} \in {^{\lgn(\bar{y})} M}$, 
\[ \vp(x,\bar{b}) \in p ~ \mbox{ if and only if } ~ M \models \theta(\bar{b}, \bar{c}). \]
Fix some sequence $\bar{\sigma}_*$ of $\tau(\Phi)$-terms and some $\bar{t}_* \in \inc(I)$ so that evaluated in $M$, 
\[ \bar{c} = \bar{\sigma}_*(\bar{a}_{\bar{t}_*}). \]
Fix any detailed enumeration of $p$ : 
\[  \langle \vp(x, \bar{\sigma}_\alpha (\bar{a}_{\bar{t}_\alpha}))^{\ii_\alpha} ~:~ \alpha < \kappa \rangle. \]
Consider the function $F$ given by  
\[ (\bar{t}_*, \vp, \bar{\sigma}_\alpha, \tpqf(\bar{t}_\alpha, \bar{t}_*, I))~ \mapsto ~ \ii_\alpha.  \]
Fix any $\aleph_0$-saturated $J \supseteq I$ from $\mk$ and we would like to show the application of $F$ 
defines a consistent $q \supseteq p$. 
Recall from $\ref{c:nice}(3)$ that $\GEM_{\tau(T)}(I, \Phi) \preceq \GEM_{\tau(T)}(J, \Phi)$.  
So if $\vp(x,\bar{\sigma}_\alpha(\bar{a}_{\bar{t}_\alpha}))^{\ii_\alpha} \in p$, then for any other $\bar{t}^\prime \in \inc(J)$ such that 
\[ \tpqf(\bar{t}^\prime, \bar{t}_*, J) = \tpqf(\bar{t}_{\alpha}, \bar{t}_*, I) \]
we have that in $N = \GEM_{\tau(T)}(J, \Phi)$, again recalling $\ref{c:nice}(2)$, 
\[ \tp_{\tau(T)}(\bar{a}_{\bar{t}^\prime}, \bar{a}_{\bar{t}_*}, N) = \tp_{\tau(T)}(\bar{a}_{\bar{t}_{\alpha}}, \bar{a}_{\bar{t}_*}, N). \]
In particular, 
\[ \psi(\bar{a}_{\bar{t}^\prime}, \bar{a}_{\bar{t}_*}) := \theta(\bar{\sigma}_{\alpha}(\bar{a}_{\bar{t}^\prime}), 
\bar{\sigma}_*(\bar{a}_{\bar{t}_*})) =  \theta(\bar{\sigma}_{\alpha}(\bar{a}_{\bar{t}^\prime}), \bar{c}) \]
will hold in $N$ if and only if 
\[ \psi(\bar{a}_{\bar{t}_{\alpha}}, \bar{a}_{\bar{t}_*}) := \theta(\bar{\sigma}_{\alpha}(\bar{a}_{\bar{t}_{\alpha}}), 
\bar{\sigma}_*(\bar{a}_{\bar{t}_*})) = \theta(\bar{\sigma}_{\alpha}(\bar{a}_{\bar{t}_\alpha}), \bar{c})\]
holds in $N$, so if and only if $\ii_\alpha = 1$. 
So $F$ agrees with the definition given by $\theta(\bar{y}, \bar{c})$, thus its output will be consistent. 
\end{proof}

\begin{disc}
\emph{In the precursor to this paper \cite{MiSh:1124} \S 9 we summarized the main results proved there by suggesting a definition corresponding to 
(in the present notation)
weak definitions over the empty set.  The proof of \cite{MiSh:1124}, Claim 5.10 
there established that for $T = \trg$ the theory of 
the random graph, $\mk = \mk_\mu$ the class of 
linear orders expanded by $\mu$ unary predicates which partition the domain, and a context $\mc = (I, \mk)$ where 
$I$ is separated,  we have that in any $M = \GEM(I, \Phi) \models T$, any partial $\Delta$-type over $M$ for 
$\Delta = \{ R(x,y), \neg R(x,y) \}$ has a weak definition over the empty set.}
\end{disc}

\begin{concl} 
\label{c:stx} Weak definability of $\vp$-types is strictly weaker than definability of $\vp$-types, since a $\vp$-type over a model is definable if and only if 
$\vp$ is stable. 
\end{concl}

\begin{disc}
\emph{The extension of stability in \ref{c:stx} requires looking locally. Notice we have \emph{not} called a type weakly definable when each of its formulas is.  Rather, we require a single finite sequence 
$\bar{t}_*$ which works for the entire type. (What if each formula is weakly definable but the 
type is not? Then there is no problem in realizing each $\vp$-type in some larger $\Psi$, but we won't be able to realize the 
entire type at the same time.) 
This is justified by Claim \ref{c:143} below, and indeed, the careful reader may guess that 
non-superstability, suitably extended, will have an important role to play in what follows.  } 
\end{disc}
\br

We now connect weak definability to the construction of saturated models. 
For the remainder of the section, let the following be arbitrary but fixed. 

\begin{hyp} \label{m2} \emph{ For the rest of the section,  }
\begin{enumerate}
\item $\xc$  a context, so $I = I_\xc$ and $\mk = \mk_\xc$ are given.   
\item $T$  a complete first-order theory. 
\item $\Phi \in \Upsilon = \Upsilon_\mk[T]$, recalling notation $\ref{notation15}$. 
\item $M = \GEM_{\tau(T)}(I, \Phi)$. 
\item ``there exists $\Psi \geq \Phi$'' always means $\Psi \in \Upsilon[T]$. 
\end{enumerate}
\end{hyp}

\begin{claim} \label{c:142}
Let $p = p(\bar{x})$ be a partial type in $M$. Suppose $p$ has a weak definition over some 
finite $\bar{t}_* \subseteq I$. Then there exists $\Psi \in \Upsilon$, $\Psi \geq \Phi$ such that $p$ is realized in $\GEM_{\tau(T)}(I, \Psi)$. 
\end{claim}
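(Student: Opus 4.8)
\textbf{Proof proposal for Claim \ref{c:142}.}

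The plan is to use the weak definition of $p$ to produce a $\GEM$ model over a large saturated index model that realizes an extension of $p$, and then to use the Ramsey property (in the form of Corollary \ref{d:ramsey-exp}) to ``pull back'' this realization to a new template over the original $I$. First I would unpack the hypothesis: by Definition \ref{d:gd}, there is a detailed enumeration $\langle \vp_\alpha(x, \bar\sigma_\alpha(\bar a_{\bar t_\alpha})) : \alpha < \kappa \rangle$ of $p$, a finite $\bar t_* \in \inc(I)$, and a partial function $F$ such that for some $\aleph_0$-saturated $J$ with $I \subseteq J \in \mk$, the set $q$ of formulas obtained by applying $F$ over all $\bar t \in \inc(J)$ realizing the relevant quantifier-free types (over $\bar t_*$) is consistent in $N = \GEM_{\tau(T)}(J, \Phi)$ and extends $p$. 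So in $N$ we may choose an element (or tuple) $\bar d$ realizing $q$, hence realizing $p$.

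Next I would set up the application of Corollary \ref{d:ramsey-exp}. Take the $\aleph_0$-saturated $J$ just mentioned, the template $\Phi$, and $M' = \GEM(J, \Phi)$ with skeleton $\langle \bar a_s : s \in J\rangle$. Let $N^+ = (M', \bar d)$, i.e., $M'$ expanded by constants naming the coordinates of $\bar d$ (equivalently one may work inside an elementary extension of $M'$ where $\bar d$ lives and then name it; since $\bar d$ realizes a type over $M'$ it can be taken in $M'$ itself when $M'$ is sufficiently saturated, but naming by constants in an expansion is cleaner and is exactly what \ref{d:ramsey-exp}(d) allows). By Corollary \ref{d:ramsey-exp} there is a template $\Psi \in \Upsilon[T]$ with $\tau(\Psi) \supseteq \tau(N^+) \supseteq \tau(T) \cup \{$new constants$\}$ and $\Psi \geq \Phi$, such that $\Psi$ reflects the skeleton $\bar a$ of $M'$ in the sense of \ref{d:r1}(ii). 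The key point is that the new constants of $\Psi$, evaluated in $\GEM(J,\Psi)$ (and hence in $\GEM(I,\Psi)$, using $\GEM(I,\Psi) \preceq \GEM(J,\Psi)$ from \ref{c:nice}(3)/\ref{d:order}), will name a tuple $\bar d'$; I must check $\bar d'$ realizes $p$ over $M = \GEM_{\tau(T)}(I,\Phi)$.

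For this last verification I would argue formula by formula. Each formula of $p$ has the form $\vp_\alpha(x, \bar\sigma_\alpha(\bar a_{\bar t_\alpha}))$ with $\bar t_\alpha \in \inc(I) \subseteq \inc(J)$; the statement ``$\bar d$ satisfies $\vp_\alpha(x, \bar\sigma_\alpha(\bar a_{\bar t_\alpha}))$'' is a $\tau(N^+)$-sentence with parameters from the skeleton at the fixed tuple $\bar t_\alpha$, so the reflection property of $\Psi$ (applied with $s_0, \dots, s_{n-1}$ enumerating $\bar t_\alpha$, noting $\bar t_\alpha$ realizes its own quantifier-free type in $J$) transfers its truth in $N^+$ to truth in $\GEM(J, \Psi)$ with the constants interpreted and $\bar a_{\bar t_\alpha}$ now the $\Psi$-skeleton at $\bar t_\alpha$; then one descends to $\GEM(I, \Psi)$ by elementarity. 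One also uses $\Phi \leq \Psi$ so that the $\tau(\Phi)$-terms $\bar\sigma_\alpha$ evaluate the same way on the skeleton. Hence $\bar d'$ realizes every formula of $p$ in $\GEM_{\tau(T)}(I, \Psi)$, as desired. The main obstacle is bookkeeping around \emph{which} sequence of parameters to feed into the Ramsey/reflection machinery: $q$ (and $\bar d$'s type over $M'$) involves infinitely many tuples $\bar t$ ranging over all of $\inc(J)$, but reflection in \ref{d:r1}/\ref{d:ramsey-exp} is stated for each finite sentence separately, so I need to be careful that reflecting each formula of $p$ individually suffices — which it does, since realizing a type just means realizing each of its formulas, and the constants naming $\bar d$ are a single fixed expansion used uniformly. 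A secondary subtlety is ensuring $T_\Psi \supseteq T$ and $\Psi$ has Skolem functions for $T$ so that $\Psi \in \Upsilon[T]$ in the sense of \ref{notation15}; this is part of what Corollary \ref{d:ramsey-exp} delivers together with \ref{c:nice}.
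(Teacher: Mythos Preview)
There is a genuine gap in your use of the reflection clause of the Ramsey property. Reflection (Definition \ref{d:r1}(ii), Corollary \ref{d:ramsey-exp}) has a \emph{universal} hypothesis: to conclude that $\GEM(J,\Psi) \models \vp_\alpha(\bar c, \bar\sigma_\alpha(\bar a_{\bar t_\alpha}))$ (where $\bar c$ is your tuple of new constants), you must know that $N^+ \models \vp_\alpha(\bar d, \bar\sigma_\alpha(\bar a_{\bar t'}))$ for \emph{every} $\bar t'$ realizing $\tpqf(\bar t_\alpha, \emptyset, J)$. Your remark that ``$\bar t_\alpha$ realizes its own quantifier-free type'' only gives you one such instance. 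The weak definition $F$ guarantees $\bar d \models \vp_\alpha(\bar x, \bar\sigma_\alpha(\bar a_{\bar t'}))$ only for those $\bar t'$ with $\tpqf(\bar t', \bar t_*, J) = \tpqf(\bar t_\alpha, \bar t_*, I)$; for $\bar t'$ with the same quantifier-free type over $\emptyset$ but a different one over $\bar t_*$, there is no reason $\bar d$ satisfies the formula (and $F$ may well dictate the opposite truth value). Folding $\bar t_*$ into the tuple does not help: then you need the formula for every $\bar s^\smallfrown \bar t'$ of the right type, i.e.\ for every copy $\bar s$ of $\bar t_*$, and a single constant $\bar d$ realizes $q_{\bar t_*}$ only, not $q_{\bar s}$ for all such $\bar s$.

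The paper's proof addresses exactly this point by replacing constants with new \emph{function symbols} $G_0,\dots,G_{k-1}$ applied to $\bar a_{\bar s}$: in an elementary extension $N_1$ of $N$ one realizes $q_{\bar s}$ for every $\bar s$ with $\tpqf(\bar s,\emptyset,J)=\tpqf(\bar t_*,\emptyset,I)$ (this uses Observation \ref{o:corresp}), and interprets $\bar G(\bar a_{\bar s})$ as that realization. Now the sentence $\vp_\alpha(\bar G(\bar a_{\bar s}), \bar\sigma_\alpha(\bar a_{\bar t'}))$ genuinely holds for every $\bar s^\smallfrown \bar t'$ of the right quantifier-free type, so reflection applies and $\bar G(\bar a_{\bar t_*})$ realizes $p$ in $\GEM(I,\Psi)$. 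The moral is that the realization must be allowed to vary with the copy of $\bar t_*$; a constant cannot do this.
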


\begin{rmk} 
In the special case when $\bar{t}_*$ is empty, this was noted in \cite{MiSh:1124}, $9.6$.
\end{rmk}

\begin{proof}[Proof of \ref{c:142}.]
Let $J \in \mk$ be an $\aleph_0$-saturated extension of $I$. Let $N = \GEM(J, \Phi)$. 
By hypothesis, there is a finite $\bar{t}_* \in \inc(I)$ and a function $F = F_{\bar{t}_*}$ giving a weak definition of $p$ over $\bar{t}_*$. 
Applying $F$ in the larger setting of $J$, let $q$ be the type 
$q = q_{ \bar{t}_*} (\bar{x})$ from Definition \ref{d:gd}.  Let 
\[ \mcs = \{ \bar{s} \subseteq J : \tpqf(\bar{s}, \emptyset, J) = \tpqf(\bar{t}_*, \emptyset, I) \}. \]
Recalling Observation \ref{o:corresp}, for each $\bar{s} \in \mcs$, 
let $F_{\bar{s}}$ denote the result of replacing $\bar{t}_*$ by $\bar{s}$ in the definition of $F$, 
and let $q_{\bar{s}} (\bar{x})$ denote the corresponding set of formulas. 
As $N$ is a $\GEM$-model, for each $\bar{s} \in \mcs$, 
$q_{\bar{s}} (\bar{x})$ is also a partial type. Let $N_1$ be a large elementary extension of $N$ in which each of the partial types in the set 
\[ \{  q_{\bar{s}} (\bar{x}) : \bar{s} \in \mcs \} \]
is realized, noting that $\bar{t}_* \in \mcs$ and therefore $q = q_{\bar{t}_*}$ belongs to this set.  Let $\bar{c}_{\bar{s}}$ denote a realization of 
$ q_{\bar{s}}$ in $N_1$. 
Let $\ma = \langle \bar{a}_t : t \in J \rangle$ denote the skeleton of the $\GEM$-model $N = \GEM(J, \Phi)$. 
Let $k = \lgn(\bar{x})$. 
Let $G_0, \dots, G_{k-1}$ be new $\lgn(\bar{a}_{\bar{t}_*})$-place function symbols.  
As $N \preceq N_1$, we may expand $N_1$ by interpreting the $G_i$'s so that 
\[ \langle G^{N_1}_i(\bar{a}_{\bar{s}}) : i < k \rangle = \bar{c}_{\bar{s}} \] 
for each $\bar{s} \in \mcs$. 
Finally, we may further expand $N_1$ by adding Skolem functions. 
Let $N^+_1$ denote this expanded model.  Apply the Ramsey property, Corollary \ref{d:ramsey-exp}, with $\ma$, $J$, $N^+_1$, $\Phi$ to obtain 
$\Psi \in \Upsilon[T]$, $\Psi \geq \Phi$. 

Why is this enough? By the reflection property mentioned in \ref{d:ramsey-exp}, the template $\Psi$ will record from $N^+_1$ the information 
that for each 
$\bar{s} \in \mcs$,  and each $(\bar{s}, \vp, \bar{\sigma}, \xr) \in \dom F_{\bar{s}}$ [where recall that $\xr \in \Dqf(I, \bar{s})$],  
\[  N^+_1 \models \vp ( ~  G^{N_1}_0(\bar{a}_{\bar{s}}), \dots, G^{N_1}_{k-1}(\bar{a}_{\bar{s}})~,~ \bar{s}(\bar{a}_{\bar{t}})~)^{F_{\bar{s}}(\bar{s}, \vp, \bar{\sigma}, \xr)} 
~~\mbox{ for every $\bar{t} \in \inc(J)$ realizing $\xr$.} \] 
That is, $\Psi$ records the truth or falsity of this formula as a property of $\tpqf(\bar{t}~^\smallfrown \bar{s}, \emptyset, J)$.  
This will ensure that in $\GEM(J, \Psi)$, for every $\bar{s} \in \mcs$, $\bar{G}(\bar{a}_{\bar{s}})$ 
will realize every formula of $q_{\bar{s}}$. This holds a fortiori in $\GEM(I, \Psi)$, which completes the proof. 
\end{proof}

\begin{claim} \label{c:143} 
Let $p$ be a partial type of $M = \GEM_{\tau(T)}(I, \Phi)$.  
Suppose there is some $\Psi \geq \Phi$ such that $p$ is realized in $\GEM(I, \Psi)$. 
Then $p$ has a weak definition over some finite $\bar{t}_* \in \inc(I)$ in $\GEM(I, \Psi)$. 
\end{claim}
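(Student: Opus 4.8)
The plan is to reverse the construction in Claim~\ref{c:142}, extracting a weak definition from a realization. Let $\bar{c} \in \GEM(I,\Psi)$ realize $p$, and recall $\GEM_{\tau(T)}(I,\Phi) \preceq \GEM_{\tau(T)}(I,\Psi)$ by $\ref{d:order}$. Since the skeleton $\ma = \langle \bar{a}_t : t \in I\rangle$ generates $\GEM(I,\Psi)$, we may write $\bar{c} = \bar{\tau}_*(\bar{a}_{\bar{t}_*})$ for some finite sequence $\bar{\tau}_*$ of $\tau(\Psi)$-terms and some $\bar{t}_* \in \inc(I)$; this $\bar{t}_*$ will be the parameter for the weak definition. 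First I would fix a detailed enumeration $\langle \vp_\alpha(x, \bar{\sigma}_\alpha(\bar{a}_{\bar{t}_\alpha})) : \alpha < \kappa \rangle$ of $p$ (in the signature $\tau(\Phi)$, since $p$ is a type of $M$, using Discussion~\ref{disc:enum}), where without loss of generality $\bar{t}_*$ is an initial segment of each $\bar{t}_\alpha$ (absorb $\bar{t}_*$ into the $\bar{t}_\alpha$ by padding, which only enlarges the finite sequences involved and does not change quantifier-free types over $\bar{t}_*$).

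The key step is to define $F$ by reading off, inside $\GEM(I,\Psi)$, whether $\vp_\alpha(\bar{\tau}_*(\bar{a}_{\bar{t}_*}), \bar{\sigma}_\alpha(\bar{a}_{\bar{t}_\alpha}))$ holds, and — crucially — observing that this truth value depends only on $\tpqf(\bar{t}_\alpha, \bar{t}_*, I)$. This is exactly the indiscernibility built into the $\GEM$ model: if $\bar{t}' \in \inc(I)$ has $\tpqf(\bar{t}', \bar{t}_*, I) = \tpqf(\bar{t}_\alpha, \bar{t}_*, I)$, then $\bar{a}_{\bar{t}_*}{}^\smallfrown\bar{a}_{\bar{t}'}$ and $\bar{a}_{\bar{t}_*}{}^\smallfrown\bar{a}_{\bar{t}_\alpha}$ have the same type in $\GEM(I,\Psi)$ by $\ref{c:nice}(2)$, so in particular $\vp_\alpha$ evaluated at the corresponding $\bar{\tau}_*,\bar{\sigma}_\alpha$-images gives the same answer. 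So I would set
\[ F(\bar{t}_*, \vp_\alpha, \bar{\sigma}_\alpha, \tpqf(\bar{t}_\alpha, \bar{t}_*, I)) = \ii_\alpha, \]
where $\ii_\alpha \in \{0,1\}$ records whether $\GEM(I,\Psi) \models \vp_\alpha(\bar{c}, \bar{\sigma}_\alpha(\bar{a}_{\bar{t}_\alpha}))$; this is consistent (well-defined as a function) precisely by the indiscernibility observation. Then I would check that $F$ gives a weak definition: for an $\aleph_0$-saturated $J \supseteq I$ in $\mk$, the set $q$ of formulas produced by $F$ in $\GEM(J,\Psi)$ is realized by $\bar{c}$ itself — because $\GEM(I,\Psi) \preceq \GEM(J,\Psi)$ (by $\ref{c:nice}(3)$, noting $\Psi \in \Upsilon[T]$), and for every $\bar{t} \in \inc(J)$ realizing some $\xr \in \Dqf(I,\bar{t}_*)$ used in $\dom F$, the formula $\vp(\bar{c}, \bar{\sigma}(\bar{a}_{\bar{t}}))^{F(\dots)}$ holds by the same quantifier-free-type-to-type transfer, now from $I$ to $J$. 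Hence $q$ is consistent and, restricting attention to the $\bar{t}_\alpha$'s which lie in $I$, extends $p$; this verifies Definition~\ref{d:gd}.

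One subtlety to handle carefully: Definition~\ref{d:gd} requires the weak definition to be stated in $\GEM(I,\Psi)$, i.e.\ with respect to the template $\Psi$ and its terms, so the detailed enumeration of $p$ and the terms $\bar{\sigma}_\alpha$ should be taken as $\tau(\Psi)$-terms — which is fine since $\tau(\Phi) \subseteq \tau(\Psi)$, and a $\tau(\Phi)$-detailed enumeration is also a $\tau(\Psi)$-one. The main obstacle I expect is bookkeeping around the parameter $\bar{t}_*$: one must ensure that the single finite $\bar{t}_*$ coming from the realization $\bar{c}$ simultaneously serves as the base for \emph{all} the quantifier-free types $\tpqf(\bar{t}_\alpha, \bar{t}_*, I)$ appearing in the enumeration, which is why the padding step (making $\bar{t}_*$ an initial segment of each $\bar{t}_\alpha$, using $\inc(I)$ and possibly reordering) is needed and must be done before defining $F$. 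Everything else is a direct application of the defining indiscernibility of $\GEM$ models together with the elementarity $\GEM(I,\Psi) \preceq \GEM(J,\Psi)$.
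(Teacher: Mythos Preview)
Your proposal is correct and follows essentially the same approach as the paper: pick $\bar{t}_*$ so that the realization $\bar{c}$ lies in $\GEM(\bar{t}_*,\Psi)$, then define $F$ by reading off the truth value of $\psi(\bar{c},\bar{\sigma}(\bar{a}_{\bar{s}}))$ in $\GEM(I,\Psi)$, noting this depends only on $\tpqf(\bar{s},\bar{t}_*,I)$ by indiscernibility of the skeleton, and finally observe that $\bar{c}$ itself realizes the resulting $q$ in $\GEM(J,\Psi)$. The padding step (making $\bar{t}_*$ an initial segment of each $\bar{t}_\alpha$) is harmless but not needed, since Definition~\ref{d:gd} already works with types in $\Dqf(I,\bar{t}_*)$, i.e.\ quantifier-free types \emph{over} $\bar{t}_*$; the paper's proof simply omits this and defines $F$ directly on all compatible tuples.
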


\begin{proof} 
Suppose $\bar{c}$ realizes $p$ in $GEM(I,\Psi)$.  Let $\bar{t}_*$ be a finite subset of $I$ such that 
$\bar{c} \in \GEM(\bar{t}_*, \Psi)$, noting that if $\bar{c}$ is named by constants or is otherwise in the algebraic closure of the empty set, 
we may choose $\bar{t}_*$ to be empty.    Let $J \supseteq I$ be $\aleph_0$-saturated, and assume the skeleton $\ma$ of 
$\GEM(J, \Psi)$ extends that of $\GEM(I, \Psi)$. 

For any\footnote{assuming the given lengths of the variables, types, sequences are compatible.} 
tuple $(\bar{t}_*, \psi, \bar{\sigma}, \xr)$, let $F$ be given by $ F( \bar{t}_*, \psi, \bar{\sigma}, \xr ) = 1 $ when  
for some, equivalently every, finite $\bar{s} \subseteq I$ with $\tpqf(\bar{s}, \bar{t}_*, I) = \xr$, we have that 
$\GEM(I, \Psi) \models$ $\psi[\bar{c}, \bar{\sigma}(\bar{a}_{\bar{s}}) ]$;   and  $ F( \bar{t}_*, \psi, \bar{\sigma}, \xr ) = 0 $ otherwise. 
Since $\Psi$ is a template, and since $\bar{c}$ realizes $p$, this function is well defined and has the required properties.  
\end{proof}

\begin{disc}
\emph{If $p$ is a partial type of $M = \GEM_{\tau(T)}(I, \Phi)$, and has a weak definition in some $\GEM(I, \Psi)$ for $\Psi \geq \Phi$, 
must there be a weak definition already in $\GEM(I, \Phi)$? After all, $I$ has not changed.  This question has to do with the 
choice of detailed enumeration. If we fix a detailed enumeration of $p$ in $\GEM(I, \Phi)$, then whether or not this 
specific detailed enumeration gives rise to a weak definition is determined by $\Phi$;  a later, larger $\Psi \geq \Phi$ won't be able 
to change the situation.  However, our definition \ref{d:gd} starts by choosing in the given model, some detailed enumeration, 
and certainly with richer templates, the available detailed enumerations may increase. This is why a priori, a weak 
definition may become available later in some $\GEM(I, \Psi)$.}
\end{disc}

\begin{cor} Let $p$ be a partial type of $M = \GEM_{\tau(T)}(I, \Phi)$. \\  Then we have  (a) if and only if (b):
\begin{enumerate}
\item[(a)] there exists some $\Psi _1 \geq \Phi$ such that $p$ has a weak definition in $\GEM(I, \Psi_1)$ 
\\ $($and therefore 
has a weak definition in $\GEM(I, \Psi)$ for all $\Psi \geq \Psi_1$$)$.  
\item[(b)] there exists some $\Psi_2 \geq \Phi$ such that $p$ is realized in $\GEM(I, \Psi_2)$ 
\\ $($and therefore 
realized in $\GEM(I, \Psi)$ for all $\Psi \geq \Psi_2$$)$. 
\end{enumerate}
\end{cor}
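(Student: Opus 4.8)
The plan is to derive this corollary directly from the two preceding claims, \ref{c:142} and \ref{c:143}, together with the monotonicity observations \ref{o:larger} and \ref{o:corresp}, so no new model-theoretic work should be needed.

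First, for the direction (a) $\implies$ (b): assume $\Psi_1 \geq \Phi$ is such that $p$ has a weak definition in $\GEM(I, \Psi_1)$. Apply Claim \ref{c:142} with $\Psi_1$ playing the role of $\Phi$ there (this is legitimate since $\Psi_1 \in \Upsilon[T]$ by Hypothesis \ref{m2}, and $p$ is a partial type of $\GEM_{\tau(T)}(I, \Psi_1)$, being a partial type of the smaller model $M$). Claim \ref{c:142} then produces $\Psi_2 \in \Upsilon$ with $\Psi_2 \geq \Psi_1 \geq \Phi$ such that $p$ is realized in $\GEM_{\tau(T)}(I, \Psi_2)$. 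The parenthetical ``therefore'' clause — that $p$ is then realized in $\GEM(I, \Psi)$ for all $\Psi \geq \Psi_2$ — follows because $\GEM_{\tau(T)}(I, \Psi_2) \preceq \GEM_{\tau(T)}(I, \Psi)$ by Definition \ref{d:order} (the ordering $\leq_\Upsilon$ is set up precisely so that the $\tau(T)$-reduct embeds elementarily), so any realization of $p$ persists.

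For the direction (b) $\implies$ (a): assume $\Psi_2 \geq \Phi$ is such that $p$ is realized in $\GEM(I, \Psi_2)$. Apply Claim \ref{c:143} with $\Psi_2$ in the role of $\Psi$; it yields that $p$ has a weak definition over some finite $\bar{t}_* \in \inc(I)$ in $\GEM(I, \Psi_2)$, so we may take $\Psi_1 = \Psi_2$. The ``therefore'' clause — that $p$ then has a weak definition in $\GEM(I, \Psi)$ for all $\Psi \geq \Psi_1$ — is exactly Observation \ref{o:larger}, which says weak definability is preserved as witnessed by the same $\bar{t}_*$ and $F$ when passing to a larger template.

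I do not anticipate a genuine obstacle here; the only point requiring a little care is bookkeeping about which claim is being instantiated at which template, and checking that the hypotheses of \ref{c:142} and \ref{c:143} (in particular that $p$ is still a partial type of the relevant $\GEM$-model, and that the templates in question lie in $\Upsilon[T]$ and are $\leq_\Upsilon$-above $\Phi$) are met after the substitutions. If anything is delicate it is confirming that Claim \ref{c:142}'s output template can be taken to satisfy $\Psi_2 \geq \Phi$ and not merely $\Psi_2 \geq \Psi_1$, but this is immediate from transitivity of $\leq_\Upsilon$.
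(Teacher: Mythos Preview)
Your proof is correct and follows essentially the same route as the paper's own argument: invoke Claim~\ref{c:142} (with $\Psi_1$ in the role of $\Phi$) for (a)~$\Rightarrow$~(b), invoke Claim~\ref{c:143} and set $\Psi_1 = \Psi_2$ for (b)~$\Rightarrow$~(a), and justify the parentheticals via the definition of $\leq_\Upsilon$ and Observation~\ref{o:larger} respectively. The only superfluous item is your mention of Observation~\ref{o:corresp} in the opening sentence, which you never actually use and is not needed here.
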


\begin{proof}
If $p$ has a weak definition in $\GEM(I, \Psi_1)$ then Claim \ref{c:142} gives $\Psi_2 \geq \Psi_1$ such that 
$p$ is realized in $\GEM(I, \Psi_2)$. If $p$ is realized in $\GEM(I, \Psi_2)$ then letting $\Psi_1 = \Psi_2$, 
Claim \ref{c:143} shows that $p$ has a weak definition [that is, over some finite $\bar{t}_*$] in $\GEM(I, \Psi_1)$.  
For the parentheticals, being realized is clearly preserved under increasing the template by definition of the order on templates, 
and Observation \ref{o:larger} records that being weakly definable is too. 
\end{proof}

\begin{cor} Let $p$ be a partial type of $M = \GEM_{\tau(T)}(I, \Phi)$. 
\\  Then we have (a) if and only if (b):
\begin{enumerate}
\item[(a)] for no $\Psi _1 \geq \Phi$ does $p$ have a weak definition in $\GEM(I, \Psi_1)$.  
\item[(b)] for no $\Psi_2 \geq \Phi$ is $p$ is realized in $\GEM(I, \Psi_2)$. 
\end{enumerate}
\end{cor}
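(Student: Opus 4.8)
The plan is to observe that this corollary is the contrapositive of the immediately preceding one, so nothing new needs to be proved; I would simply invoke that previous corollary and negate both sides. Concretely, I would note that statement (a) here, ``for no $\Psi_1 \geq \Phi$ does $p$ have a weak definition in $\GEM(I,\Psi_1)$,'' is exactly the negation of part (a) of the previous corollary, and statement (b) here, ``for no $\Psi_2 \geq \Phi$ is $p$ realized in $\GEM(I,\Psi_2)$,'' is exactly the negation of part (b) of the previous corollary. Since that corollary asserts the equivalence of its (a) and (b), the equivalence of their negations is immediate by propositional logic.

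For a self-contained argument not routing through the previous corollary, one direction goes via Claim \ref{c:143}: if (a) fails, i.e. $p$ is realized in some $\GEM(I,\Psi_2)$ with $\Psi_2 \geq \Phi$, then Claim \ref{c:143} (applied with $\Psi = \Psi_2$) produces a finite $\bar{t}_* \in \inc(I)$ over which $p$ has a weak definition in $\GEM(I,\Psi_2)$, contradicting (a) — well, matching the negation of (a). The other direction goes via Claim \ref{c:142}: if (b) fails, i.e. $p$ has a weak definition over some finite $\bar{t}_* \subseteq I$ in $\GEM(I,\Psi_1)$ for some $\Psi_1 \geq \Phi$, then Claim \ref{c:142} (applied inside $\GEM(I,\Psi_1)$, using Observation \ref{o:larger} to see the hypothesis of \ref{c:142} is met, or just re-reading \ref{c:142} with $\Phi$ replaced by $\Psi_1$) gives $\Psi \geq \Psi_1 \geq \Phi$ with $p$ realized in $\GEM(I,\Psi)$, matching the negation of (b). So (a) $\iff$ (b).

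I do not anticipate any real obstacle here; the only thing to be careful about is the quantifier bookkeeping — ``weak definition'' in Definition \ref{d:gd} already quantifies existentially over the finite $\bar{t}_*$ and over the auxiliary $\aleph_0$-saturated $J \supseteq I$, so when Claims \ref{c:142} and \ref{c:143} speak of ``a weak definition over some finite $\bar{t}_*$'' they are producing exactly the object whose non-existence (a) asserts, with no mismatch. Thus the proof is a one-line appeal to the preceding corollary.

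\begin{proof}
This is the contrapositive of the preceding corollary: statement (a) here is the negation of item (a) there, and statement (b) here is the negation of item (b) there. Since those two items were shown to be equivalent, so are their negations.
\end{proof}
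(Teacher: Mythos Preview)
Your proof is correct and matches the paper's treatment: the paper gives no proof for this corollary, treating it as immediate from the preceding one, which is exactly your one-line argument. One small slip in your self-contained explanatory paragraph: you swapped the labels, writing ``if (a) fails, i.e.\ $p$ is realized'' when that is the failure of (b), and likewise in the next sentence; the underlying logic and the formal proof block are fine.
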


\vspace{5mm}

What is the core mechanism underlying the appearance of weak definability? 
\\ Suppose we look locally: this suggests:

\begin{qst} \label{first-q}
Let $p$ be a $\vp$-type or partial $\vp$-type in $M = \GEM_{\tau(T)}(I, \Phi)$.  Does there exist a finite sequence $\bar{t}_* \in \inc(I)$, 
a formula $\psi(x, \bar{b}) \in p$ $($so $\psi = \vp$ or $\neg \vp$$)$,  a finite sequence
$\bar{\sigma}$ of $\tau(\Phi)$-terms, and $\bar{t}  \in \inc(I)$ such that 
\begin{equation}
\label{fds0} \psi(x,\bar{b}) = \psi(x, \bar{\sigma}^{\GEM(I, \Phi)}(\bar{a}_{\bar{t}})) 
\end{equation}
and such that for some $\aleph_0$-saturated $J$, with $I \subseteq J \in \mk$,  in $N = \GEM(J, \Phi)$, the set of formulas 
\begin{equation}
\label{fds1} \{ \psi(\bar{x}, \sigma^N(\bar{a}_{\bar{s}})   ) : \tpqf(\bar{s}, \bar{t_*}, J) = \tpqf(\bar{t}, \bar{t}_*, I) \} 
\end{equation}  
\emph{extends $p$ and} is consistent? 
\end{qst}

Note that as $\Phi \leq \Psi$, and 
$\GEM(I, \Phi) \preceq \GEM(J, \Psi)$ by \ref{c:nice}, $\sigma(\bar{a}_{\bar{t}})$ evaluates identically in 
both the larger and smaller model.  Recall that the notation $J[\bar{t}_*]$ means $J$ expanded by constants for elements of $\bar{t}_*$. 

\begin{disc} \label{d-333}
\emph{Equations (\ref{fds0}) and (\ref{fds1}) tell us that in a possibly larger model there is a sequence 
\[ \langle \sigma^N(  \bar{a}_{\bar{s}})  : \bar{s} \in Q^{J[\bar{t_*}]}_\xr \rangle \]  
which is $\mk$-indiscernible over $\GEM(\bar{t}_*, \Psi)$ and which includes $\bar{b} = \sigma^N (\bar{a}_{\bar{t}})$, 
and the question essentially asks whether $\psi$ instantiated along this sequence is inconsistent.}   
\end{disc}

\br

The clarity brought by the larger $J$ is important, recalling e.g. \ref{d:sep}.
Still, the instructions as to whether or not to realize $p$ in $\GEM(I, \Phi)$ must come from the template $\Phi$. 
If inconsistency appears in $\GEM(J, \Phi)$,  this template cannot produce a realization even 
for $I$.

\br

\begin{disc}
\emph{In the next sections, we will see that this definition has a special explanatory power considered alone,  a priori free of 
connection to $\GEM$-models.} 
\end{disc}

\vspace{5mm}

\section{Shearing} \label{s:fd}
\setcounter{theoremcounter}{0}

In this section we develop a definition that will be central to the rest of the paper. Informally, it is the right extension of dividing (in the usual sense 
of model theory) 
to the case where we allow $\mk$-indiscernible sequences, for $\mk$ any index model class, not necessarily only linear orders.

\begin{ntn}  When $I_0 \subseteq J \in \mk$ is a set, writing  
$J[I_0]$ means $J$ expanded by constants for the elements of $I_0$, and likewise for $J[\bar{s}]$ when $\bar{s} \subseteq J$ is a sequence.
\end{ntn}

\begin{defn}[Shearing] \label{e5}  
Suppose we are given a context $\xc$, a theory $T$, $M \models T$, $A \subseteq M$, and a formula $\vp(\bar{x}, \bar{c})$ of the language of $T$ with 
$\bar{c} \in {^{\omega>} M}$. 

We say that 
\[ \mbox{ \emph{the formula $\vp(\bar{x}, \bar{c})$ shears over $A$ in $M$ for $(I_0, I_1, \xc)$} } \]
when there exist a model $N$, a sequence $\mb$ in $N$, enumerations $\bar{s}_0$ of $I_0$ and $\bar{t}$ of $I_1$, and an $\aleph_0$-saturated $J \supseteq I$ such that:
\begin{enumerate}
\item $I_0 \subseteq I_1$ are finite subsets of $I$
\item $M \preceq N$
\item $\mb = \langle \bar{b}_{\bar{s}} : \bar{s} \in {^{\omega >}(J[I_0])} \rangle $ is $\mk$-indiscernible in $N$ over $A$
\item $\bar{c} = \bar{b}_{\bar{t}}$, and 
\item the set of formulas 
\[ \{ \vp(\bar{x}, \bar{b}_{\bar{t}^\prime}) : \bar{t}^\prime \in {^{\lgn(\bar{t})}(J)}, 
\tpqf(\bar{t}^\prime, \bar{s}_0, J) = \tpqf(\bar{t}, \bar{s}_0, I) \} \]
is contradictory.
\end{enumerate}
\end{defn}

\begin{conv} \emph{Some conventions for Definition \ref{e5}:}
\begin{enumerate}
\item \emph{If $\xc$ is clear, we may write ``...for $(I_0, I_1)$'' instead of ``...for $(I_0, I_1, \xc)$.''}
\item \emph{We may write ``the formula $\vp(\bar{x}, \bar{c})$ shears over $A$ in $M$'' to mean that there is some $(I_0, I_1)$ 
for which this holds.} 
\end{enumerate}
\end{conv}

\begin{obs} 
Changing $J$ in the definition \ref{e5} does not matter as long as $I_1 \subseteq J \in \mk$ and $J$ is $\aleph_0$-saturated.
\end{obs}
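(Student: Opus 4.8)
The statement asserts that in Definition \ref{e5}, the particular choice of the $\aleph_0$-saturated model $J \supseteq I$ is irrelevant: if shearing of $\vp(\bar x,\bar c)$ over $A$ in $M$ for $(I_0,I_1,\xc)$ is witnessed using one such $J$, it is witnessed using any other. The plan is to take two $\aleph_0$-saturated models $J, J' \in \mk$ with $I_1 \subseteq J$ and $I_1 \subseteq J'$, assume the shearing data $(N, \mb, \bar s_0, \bar t, J)$ exists, and produce analogous data over $J'$. The key point is that, by the convention following Definition \ref{d:imc} and the assumptions in an imc, any two $\aleph_0$-saturated members of $\mk$ that contain $I$ are ``the same'' from the point of view of quantifier-free types over any fixed finite $\bar s_0 \subseteq I$: both are countably homogeneous and countably universal for $\mk$, so there is a back-and-forth correspondence. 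Concretely, since $I_0 \subseteq I_1 \subseteq I$ is finite, $J[I_0]$ and $J'[I_0]$ are again $\aleph_0$-saturated in the corresponding expanded class, and realize exactly the same quantifier-free types.

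First I would set up the transfer: fix $\bar s_0$ enumerating $I_0$ inside $I$ (hence inside both $J$ and $J'$). Because $\mb = \langle \bar b_{\bar s} : \bar s \in {}^{\omega>}(J[I_0]) \rangle$ is $\mk$-indiscernible over $A$, the type of any finite concatenation $\bar b_{\bar s^0}{}^\smallfrown\cdots{}^\smallfrown\bar b_{\bar s^{k-1}}$ over $A$ depends only on $\tpqf(\bar s^0{}^\smallfrown\cdots{}^\smallfrown\bar s^{k-1},\emptyset,J)$, equivalently on the quantifier-free type over $\bar s_0$. I would then define a new sequence $\mb' = \langle \bar b'_{\bar u} : \bar u \in {}^{\omega>}(J'[I_0]) \rangle$ indexed by $J'[I_0]$, by setting $\bar b'_{\bar u}$ to realize (in a suitable elementary extension $N'$ of $M$) the type over $A$ that $\mb$ assigns to the common quantifier-free type of $\bar u$ over $\bar s_0$ — and doing this coherently for all finite tuples simultaneously via compactness, exactly because $J'[I_0]$ and $J[I_0]$ realize the same quantifier-free types and $\mb$ is $\mk$-indiscernible. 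This makes $\mb'$ an $\mk$-indiscernible sequence over $A$ in $N'$, with $\bar b'_{\bar t} = \bar c$ (using that $\bar t$, enumerating $I_1 \subseteq I$, lies in $J'$ too and has the same quantifier-free type over $\bar s_0$ as before, since $I_1$ is a fixed finite subset of $I$).

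Next I would verify clause (5): the set $\{ \vp(\bar x,\bar b'_{\bar t^\prime}) : \bar t^\prime \in {}^{\lgn(\bar t)}(J'),\ \tpqf(\bar t^\prime,\bar s_0,J') = \tpqf(\bar t,\bar s_0,I) \}$ is contradictory. Any finite subset of this set corresponds, via the identification of quantifier-free types, to a finite subset of the analogous set over $J$ (each $\bar t^\prime$ over $J'$ realizing $\tpqf(\bar t,\bar s_0,I)$ has, in $J$, a realization of the same quantifier-free type, and $\mb$, $\mb'$ assign the same $\vp$-instance type to matching quantifier-free types). Inconsistency of a set of formulas is finitary (compactness), so contradictoriness transfers. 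Thus $(N',\mb',\bar s_0,\bar t,J')$ witnesses that $\vp(\bar x,\bar c)$ shears over $A$ in $M$ for $(I_0,I_1,\xc)$ using $J'$.

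\textbf{Main obstacle.} The one genuinely non-formal point is the back-and-forth between $J[I_0]$ and $J'[I_0]$: one must check that expanding an $\aleph_0$-saturated $J \in \mk$ by finitely many constants from $I$ keeps it ``$\aleph_0$-saturated'' in the relevant sense (countably homogeneous and countably universal), and that this suffices to guarantee the two expanded models realize exactly the same quantifier-free finite types — so that the coherent choice of $\mb'$ via compactness is legitimate and the quantifier-free type matching in clause (5) goes through. This is where the imc axioms (universality, closure conditions, and the saturation conventions) of Definition \ref{d:imc} are used; everything else is a routine compactness-and-indiscernibility argument.
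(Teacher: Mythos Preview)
The paper states this Observation without proof, treating it as evident from the definitions. Your argument is correct and supplies the details the paper omits: the essential points are exactly the ones you identify, namely that (i) any two $\aleph_0$-saturated $J, J' \in \mk$ containing the finite set $I_1$ realize the same quantifier-free types over $\bar{s}_0$ (by countable universality and homogeneity), (ii) a $\mk$-indiscernible sequence over $J$ can be transported to one over $J'$ via compactness, anchored at $\bar{b}'_{\bar{t}} = \bar{c}$, and (iii) the contradictoriness in clause (5) is a finitary condition depending only on a finite configuration of quantifier-free types, hence transfers. One small slip: in your first paragraph you write ``contain $I$'' where you should write ``contain $I_1$'' (matching the Observation's hypothesis), but your later paragraphs get this right and the argument goes through with $I_1$ in place of $I$ throughout.
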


\begin{disc} \emph{Definition \ref{e5} is parallel to the usual notion of dividing in that 
$M$ is not assumed to be a $\GEM$-model. Even if it is, $\mb$ need have no connection to the skeleton. 
Only the parameter $\bar{c}$ of $\vp(\bar{x},\bar{c})$ and the set $A$ are required to be in $M$  (though by the first of these 
$\mb$ has nonempty intersection with $M$). 
A priori, the sequence $\mb$ belongs to $N$. }
\end{disc}

\begin{claim} \label{c:monot}
Suppose $\vp(\bar{x}, \bar{c})$ shears over $A$ in $M$ for $(I_0, I_1, \xc)$. Suppose $I^\prime_1 \supseteq I^\prime_0 \subseteq I_0$ and 
$I_1 \subseteq I^\prime_1$. Then $\vp(\bar{x}, \bar{c})$ shears over $A$ in $M$ for $(I^\prime_0, I^\prime_1, \xc)$. 
\end{claim}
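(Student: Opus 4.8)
The plan is to unwind Definition \ref{e5} for the hypothesis ``$\vp(\bar x,\bar c)$ shears over $A$ in $M$ for $(I_0,I_1,\xc)$'' and re-package the very same witnessing data to verify it for $(I'_0,I'_1,\xc)$. So let $N$, $\mb=\langle \bar b_{\bar s}:\bar s\in {^{\omega>}(J[I_0])}\rangle$, the enumerations $\bar s_0$ of $I_0$ and $\bar t$ of $I_1$, and the $\aleph_0$-saturated $J\supseteq I$ be as guaranteed by \ref{e5}(1)--(5). The natural move is to keep $N$ and $J$ exactly as they are. We must produce from $\mb$ a sequence indexed by ${^{\omega>}(J[I'_0])}$ that is $\mk$-indiscernible over $A$, pick enumerations $\bar s'_0$ of $I'_0$ and $\bar t\,'$ of $I'_1$, check $\bar c=\bar b_{\bar t\,'}$, and verify the contradictory-set condition relative to $\bar s'_0$.

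First I would handle the index-model bookkeeping. Since $I'_0\subseteq I_0$, expanding $J$ by constants for $I_0$ is a further expansion of expanding it by constants for $I'_0$; so a sequence that is $\mk$-indiscernible over $A$ when indexed by ${^{\omega>}(J[I_0])}$ restricts (or rather, re-reads) to one indexed by ${^{\omega>}(J[I'_0])}$ — indeed $\mk$-indiscernibility over the bigger constant set is a stronger requirement, and we only need the weaker one. Concretely, define $\bar b'_{\bar s}=\bar b_{\bar s}$ for $\bar s\in {^{\omega>}(J[I'_0])}$, using that every such $\bar s$ is also a tuple in $J[I_0]$; the hypothesis that $\mb$ respects quantifier-free types over (the constants for) $I_0$ a fortiori makes $\mb'$ respect quantifier-free types over the constants for $I'_0$, which is exactly \ref{e5}(3) for the primed data. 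Next choose enumerations: let $\bar s'_0$ enumerate $I'_0$ and $\bar t\,'$ enumerate $I'_1$, where we may arrange that $\bar t\,'$ begins with (a copy of) $\bar t$, i.e. that $I_1\subseteq I'_1$ is reflected as an initial segment, padding out to the rest of $I'_1$. Then set $\bar c'=\bar b'_{\bar t\,'}$; since $\bar t\subseteq\bar t\,'$ and $\bar b'$ agrees with $\bar b$ on tuples from $J[I_0]$, the coordinates of $\bar b'_{\bar t\,'}$ corresponding to $\bar t$ give back $\bar b_{\bar t}=\bar c$. (If one insists on $\bar c=\bar b_{\bar t\,'}$ exactly rather than up to a coordinate permutation/extension, one takes $\bar t\,'$ to literally extend $\bar t$ and notes $\vp$ can be replaced by the formula in the correspondingly reordered/extended variables — this is a harmless renaming, and I would spell it out only as a remark.)

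The step requiring genuine care — and the one I expect to be the main obstacle — is condition \ref{e5}(5): we must show the set
\[ \{\vp(\bar x,\bar b'_{\bar t\,'{}^{\prime\prime}}):\bar t\,'{}^{\prime\prime}\in {^{\lgn(\bar t\,')}(J)},\ \tpqf(\bar t\,'{}^{\prime\prime},\bar s'_0,J)=\tpqf(\bar t\,',\bar s'_0,I)\} \]
is contradictory, knowing that the analogous set with $\bar t$ and $\bar s_0$ is contradictory. The point is that this new set \emph{contains a finitely-satisfiable-implies-contradiction witness built from the old one}: from any inconsistent finite subset of the old set, indexed by finitely many $\bar t^{(i)}$ with $\tpqf(\bar t^{(i)},\bar s_0,J)=\tpqf(\bar t,\bar s_0,I)$, I want to produce inconsistency in the new set. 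The subtlety is that $\bar s_0$ enumerates $I_0\supseteq I'_0$, so ``$\tpqf(\cdot,\bar s_0,J)=\tpqf(\bar t,\bar s_0,I)$'' is a stronger constraint on the index tuples than ``$\tpqf(\cdot,\bar s'_0,J)=\tpqf(\bar t\,',\bar s'_0,I)$'' — meaning the old, already-contradictory family is a sub-family of realizations, but indexed by $\bar t$-length tuples, not $\bar t\,'$-length tuples. The clean way around this is to use $\mk$-indiscernibility of $\mb$ (over $A$, and noting $I_0\subseteq J[I_0]$ so the extra coordinates of $\bar t\,'$ lying in $I'_1\setminus I_1$ but all inside $I_1\subseteq I$ are handled by $\mk$-indiscernibility): any $\bar t\,'{}^{\prime\prime}$ realizing $\tpqf(\bar t\,',\bar s'_0,I)$ over $J$ restricts, on its first $\lgn(\bar t)$ coordinates, to some $\bar t^{(i)}$; and because the remaining coordinates of $\bar t\,'$ are constants from $I_1\subseteq I$ (hence fixed by the constants-for-$I_1$ part of the expansion once we also note $I_1\subseteq I'_1$), $\vp(\bar x,\bar b'_{\bar t\,'{}^{\prime\prime}})$ reduces, via $\mk$-indiscernibility, to a formula equivalent to $\vp$ applied to the old sequence along some realization of $\tpqf(\bar t,\bar s_0,I)$-type-or-weaker. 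I would therefore finish by: take the finite inconsistent subfamily from the old set; observe each of its members appears (up to the $\mk$-indiscernibility equivalence just described) among the new set's members by choosing appropriate $\bar t\,'{}^{\prime\prime}$ extending the old indices with the fixed $I_1$-coordinates; conclude the new set is contradictory. The remaining clauses \ref{e5}(1)--(2) and (4) are immediate: $I'_0\subseteq I'_1$ are finite subsets of $I$ by hypothesis on the primed sets, $M\preceq N$ is unchanged, and (4) was arranged above. I expect writing \ref{e5}(5) carefully — keeping straight which constant-expansion governs which quantifier-free type and invoking $\mk$-indiscernibility over $A$ (Definition \ref{d:current}) at the right granularity — to be where essentially all the content of the proof lies; everything else is renaming of index data.
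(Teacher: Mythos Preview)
Your treatment of condition \ref{e5}(3) contains a genuine error: the sentence ``$\mk$-indiscernibility over the bigger constant set is a stronger requirement, and we only need the weaker one'' has the direction reversed. The structure $J[I_0]$ is an \emph{expansion} of $J[I'_0]$ (more constants named), so quantifier-free types in $J[I_0]$ are \emph{finer}. Consequently the implication ``same qf-type in $J[I_0]\Rightarrow$ same type over $A$'' has fewer instances to verify and is the \emph{weaker} condition; $\mk$-indiscernibility indexed by $J[I'_0]$ is the stronger one. Concretely: take $\mk=$ linear orders, $J=\mathbb{Q}$, $I_0=\{0\}$, $I'_0=\emptyset$. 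Then $-1$ and $1$ have the same qf-type in $J[\emptyset]$ but different qf-types in $J[\{0\}]$, so an $\mb$ that is $\mk$-indiscernible for $J[\{0\}]$ may well have $\bar b_{\langle -1\rangle}$ and $\bar b_{\langle 1\rangle}$ of different types over $A$, whence $\mb$ fails to be $\mk$-indiscernible for $J[\emptyset]$. So you cannot simply ``re-read'' $\mb$ as a sequence indexed by $J[I'_0]$; this breaks your argument at exactly the point you flagged as routine, and the later discussion of (5) inherits the problem because it presupposes $\mb'=\mb$.

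The paper gives no proof of this claim, but the intended argument is that one must build a genuinely new $\mb'$. The key structural fact you have not exploited is $I_0\setminus I'_0\subseteq I_1\subseteq I'_1$: the constants you are ``losing'' when passing from $I_0$ to $I'_0$ already occur among the coordinates of $\bar t\,'$. It is cleanest to argue in two moves. First enlarge $I_1$ to $I'_1$ while keeping $I_0$: set $\bar t\,'=\bar t\,{}^\smallfrown\bar w$ and define $\bar b'_{\bar r}:=\bar b_{\bar r\restriction\lgn(\bar t)}$ on tuples of length $\lgn(\bar t\,')$ (and, say, $\emptyset$ elsewhere); this is easily checked to be $\mk$-indiscernible for $J[I_0]$, satisfies $\bar b'_{\bar t\,'}=\bar c$, and (5) follows since by $\aleph_0$-saturation of $J$ every old witness $\bar t^{(i)}$ extends to a length-$\lgn(\bar t\,')$ tuple of the correct qf-type. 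Second, shrink $I_0$ to $I'_0$: now the coordinates of each $\bar r$ at the positions where $\bar t\,'$ carries $I_0\setminus I'_0$ serve as surrogate constants, and one uses them (together with the original $\mb$) to define $\mb''$ so that the qf-type of $\bar r$ over $I'_0$ \emph{together with those coordinates} recovers enough of the qf-type over $I_0$ to invoke indiscernibility of $\mb$. Writing this second step carefully is where the actual content lies; your proposal bypasses it with an incorrect monotonicity claim.
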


\begin{disc}
\emph{Suppose $\vp(\bar{x}, \bar{c})$ shears over $A$ in $M$ for $(I_0, I_1, \xc)$, as witnessed by $\bar{s}_0, \bar{t}$, $N$, and $\mb$.  
The same data work to show that $\vp(\bar{x}, \bar{c})$ shears in elementary extensions of $N$, and also if we take the reduct of $N$ 
to a language which still contains $\tau(\vp)$.  However, shearing does {not} necessarily persist under expansions 
(consider what happens if we name $\bar{c}$ by a constant.)}
\end{disc}

\vspace{4mm}

\begin{claim}[Dividing implies shearing] \label{needed1}
Let $T$ be any complete theory and 
suppose $\vp(\bar{x}, \bar{a})$ divides over some set $A$ in the monster model of $T$.  Then $\vp(\bar{x}, \bar{a})$ $\xc$-shears over $A$ for any 
context $\xc$. 
\end{claim}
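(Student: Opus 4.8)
The plan is to unwind the definition of shearing (Definition \ref{e5}) in the simplest possible case, namely $I_0 = I_1 = \emptyset$, and to observe that in that case shearing reduces almost verbatim to ordinary dividing, provided we can produce the required $\mk$-indiscernible sequence. Recall that $\vp(\bar x, \bar a)$ divides over $A$ means there is an $A$-indiscernible sequence $\langle \bar a_i : i < \omega \rangle$ (in the usual, linear-order sense) with $\bar a_0 = \bar a$ such that $\{ \vp(\bar x, \bar a_i) : i < \omega \}$ is $k$-inconsistent for some $k$, hence contradictory. We want to repackage this as a $\mk$-indiscernible sequence indexed by ${}^{\omega>}(J[\emptyset]) = {}^{\omega>}J$ for a suitable $\aleph_0$-saturated $J \in \mk$.

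First I would fix an $\aleph_0$-saturated $J \in \mk$ extending $I$ (this exists by Definition \ref{d:imc}(5)). The key technical point is to build, inside a large elementary extension $N \succeq M$, a sequence $\mb = \langle \bar b_{\bar s} : \bar s \in {}^{\omega>}J \rangle$ that is $\mk$-indiscernible over $A$ in the sense of Definition \ref{d:current}, and such that for a chosen singleton $t \in J$ (or a chosen $\bar t \in \inc(J)$, taking $I_1 = \emptyset$ forces $\bar t$ to be... — actually with $I_1 = \emptyset$ one needs $\bar c = \bar b_{\langle\rangle}$, so let me instead take $I_0 = \emptyset$, $I_1 = \emptyset$ won't let $\bar c$ vary; the clean choice is to allow a nonempty but still "trivial" role). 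Let me restructure: I will just take some fixed single element; more precisely I will use that the monster of $T$ contains an ordinary $A$-indiscernible sequence $\langle \bar a_i : i \in I' \rangle$ indexed by any linear order $I'$ witnessing dividing (stretch it to any length), and then use a stretching/extraction argument together with $\aleph_0$-saturation of $J$ to index a $\mk$-indiscernible array by ${}^{\omega>}J$. Concretely: embed a copy of $(J,<)$ (the underlying order) into a linear order along which $\vp(\bar x, -)$ still divides, and define $\bar b_{\bar s}$ for $\bar s = \langle s_0 < \dots < s_{n-1}\rangle$ to be (an indiscernible-preserving image of) $\bar a_{s_0}$; the non-order structure of $J$ is simply ignored, which is legitimate because ordinary indiscernibility of $\langle \bar a_i \rangle$ over $A$ in the linear order immediately yields $\mk$-indiscernibility over $A$ of the resulting sequence — two tuples $\bar s_0 ^\smallfrown \cdots ^\smallfrown \bar s_{k-1}$ with the same quantifier-free $\tau_\mk$-type in $J$ in particular have the same order type, hence their images have the same $\tp_{\tau(T)}$ over $A$. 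One must check the constancy-of-length bookkeeping in Definition \ref{d:current}, which is routine since we use sequences of a fixed finite length.

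Next, set $\bar c = \bar a = \bar b_{\bar t}$ for the appropriate index $\bar t$ corresponding to the base point of the indiscernible sequence (with $I_0 = I_1$ a suitable finite subset of $I$, or empty, matched so that condition (4) of \ref{e5} holds), and verify condition (5): the set $\{\vp(\bar x, \bar b_{\bar t'}) : \tpqf(\bar t', \bar s_0, J) = \tpqf(\bar t, \bar s_0, I)\}$ is contradictory. But all such $\bar t'$ have images $\bar b_{\bar t'}$ forming, up to reordering, an infinite subfamily of the original $A$-indiscernible sequence (here $\aleph_0$-saturation of $J$ and nontriviality of $I$ — Definition \ref{d:context} — guarantee infinitely many such $\bar t'$, so we recover an infinite indiscernible subsequence), along which $\vp(\bar x, -)$ is $k$-inconsistent; hence the set is contradictory. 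This gives all five clauses of \ref{e5}, so $\vp(\bar x, \bar a)$ $\xc$-shears over $A$.

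\textbf{Main obstacle.} The genuine work is the middle step: producing an $\mk$-indiscernible sequence indexed by ${}^{\omega>}J$ (for $J \in \mk$ $\aleph_0$-saturated) that reflects the ordinary dividing configuration, while respecting the length-function constraints and the requirement $\bar c \in M$. The subtlety, flagged in equation (\ref{e:caveat}), is that in Definition \ref{d:current} we may \emph{not} assume $\bar b_{t^\smallfrown s} = \bar b_t ^\smallfrown \bar b_s$; but this works \emph{in our favor} here, since we are free to define $\bar b_{\bar s}$ however we like, and the simplest choice (depending only on the $<$-least coordinate, or on a fixed-length initial segment) both satisfies Definition \ref{d:current} trivially and makes condition (5) a restatement of $k$-inconsistency. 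The one point requiring care is ensuring that $\bar c$, which must lie in $M$, equals $\bar b_{\bar t}$ for an index $\bar t$ realizing in $I$ the appropriate quantifier-free type — this is why $I_0, I_1$ are allowed to be nonempty finite subsets of $I$ in \ref{e5}, and by the nontriviality clauses of Definition \ref{d:context} we have enough room in $I$ to pick such $\bar t$; then extend to $N \succeq M$ large enough to contain the rest of $\mb$.
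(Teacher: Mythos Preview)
Your core observation is correct and gives a genuinely more elementary argument than the paper's. The key insight---that an ordinary $A$-indiscernible sequence $\langle \bar a_j : j \in J\rangle$ indexed by the underlying linear order of $J$ is automatically $\mk$-indiscernible over $A$, because $< \in \tau_\mk$ and hence equal quantifier-free $\tau_\mk$-types force equal order types---lets you bypass the $\GEM$-model and Ramsey-property machinery entirely. The paper instead builds a $\GEM$ model, names $A$ by constants, expands by new function symbols selecting the dividing sequence from the skeleton, and then applies the Ramsey property (Corollary~\ref{d:ramsey-exp}) to extract a genuine $\mk$-indiscernible sequence witnessing shearing; this is heavier but illustrates the Ramsey machinery and directly yields the stronger statement recorded in the Remark following the claim (shearing for \emph{any} finite $I_0$ and $I_1 = I_0 \cup \{t\}$ with $t \notin \cl(I_0)$). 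Your route already suffices for the claim as stated, and with a small tweak (letting $\bar b_{\bar s}$ pick out the coordinate of $\bar s$ corresponding to $t$ rather than always the first one) it also gives the stronger version.

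One point does need fixing: your choice of $I_0, I_1$ is muddled. Both $I_0 = I_1 = \emptyset$ and $I_0 = I_1$ nonempty fail---in the first case $\bar t = \langle\rangle$ so nothing varies in condition~(5) of Definition~\ref{e5}, and in the second the quantifier-free type over $\bar s_0$ pins every coordinate of $\bar t'$ to an element of $I_0$, so again the set in~(5) is a singleton. The correct choice is $I_0 = \emptyset$ and $I_1 = \{t\}$ for some $t \in I \setminus \cl(\emptyset)$ (such $t$ exists by nontriviality, \ref{d:context}(2)); then $\bar t = \langle t\rangle$, the set $\{t' \in J : \tpqf(t',\emptyset,J) = \tpqf(t,\emptyset,I)\}$ is infinite by non-1-triviality \ref{d:context}(4), and condition~(5) becomes precisely $k$-inconsistency of $\vp(\bar x,-)$ along an infinite subsequence of your stretched dividing sequence. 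With this correction your argument is complete.
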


\begin{rmk} 
The proof will show more, namely that we can choose any finite $I_0 \subseteq I$ and any $I_1 \supseteq I_0$ such that 
$I_1 = I_0 \cup \{ t \}$ where $t \notin \cl(I_0)$, and $\vp$ will $(I_0, I_1, \xc)$-shear over $A$.  
$($In fact there is nothing in the proof that prevents $t$ from having length longer than $1$.$)$ 
\end{rmk}

\begin{proof}[Proof of \ref{needed1}] 
The idea of the proof is simple: use the Ramsey property to upgrade a dividing sequence to a sequence witnessing shearing. 
However, we check all the details.

By our assumption, there are $1< k<\omega$ and a formula 
\begin{equation} \label{e:param}
\vp(\bar{x}, \bar{a})
\end{equation}
which $k$-divides over $A$ in the monster model of $T$.  
Let $\xc = (I, \mk) = (I_\xc, \mk_\xc)$ be given, recalling that this means satisfying 
\ref{d:imc} and \ref{d:context}. 
Fix any finite $I_0 \subseteq I$ and choose $t \in I_\xc \setminus I_0$ with 
$t$ not in the definable closure of $I_0$ in $I_\xc$.
Fix an enumeration $\bar{s}_0$ of $I_0$. 
 Let $J$ be $\aleph_0$-saturated, and without loss of generality, $I_0 \subseteq J$. 
Let 
\[  \mcs = \{ s \in J : \tpqf(t, \bar{s}_0, J) = \tpqf(s, \bar{s}_0, I) \} \]
which is infinite by our assumption \ref{d:context}, and inherits a linear ordering from $J$.   Let $I_1 = I_0 \cup \{ t \}$. We will show that 
$\vp$ shears for $(I_0, I_1, \xc)$. 
Let 
\[ \mb = \langle \bar{b}_s : s \in \mcs \rangle, \mbox{ i.e. }  \langle \bar{b}_s : s \in (\mcs, <) \rangle \] 
[where the intention is: indexed by $\mcs$ considered as a linearly ordered set] be an indiscernible sequence over $A$ in the monster model of 
$T$ witnessing the $k$-dividing of $\vp$. To belabor the point, $\mb$ is only indiscernible in the usual sense, as by compactness we can choose such a sequence indexed by any infinite linear order. In particular, 
$\bar{a}$ from (\ref{e:param}) belongs to $\mb$, and 
\[ \{ ~\vp(\bar{x}, \bar{b}_s) : s \in \mcs ~\}  \]
is $1$-consistent but $k$-contradictory. 

We now appeal to a $\GEM$-model. Choose some template $\Psi$ proper for $\mk$ so that 
$M \rstr \tau(T) \models T$, where $M = \GEM(J, \Phi)$ (this is always possible, see e.g. \ref{cont-for-t}). Let $\ma = \langle \bar{a}_s : s \in J \rangle$ be the skeleton of $M$. 
Let $M_0$ be an elementary extension of $M$ which contains $\mb$ and $A$. Let $M^+_0$ be the expansion of $M_0$ in which every element of $A$ 
is named by a constant. 
Let $m_0 = |I_0 \cup \{ t \}|$ and $m_1 = \lgn(\bar{a}_s)$ where $\bar{a}_s$ is any member of the skeleton, and 
let $\ell = \lgn(\bar{b}_s)$ for any $\bar{b}_s \in \mb$.  
Let $m = m_0 \cdot m_1$.  [Even though we are looking to build an $\mcs$-indexed sequence, 
recall we are working over $I_0$, so we need to carry out the next 
part of the construction uniformly over all copies of $I_0$ for the Ramsey property to work as desired.] 
Let $M^{++}_0$ be the following further expansion of $M^+_0$: add to the language a new sequence of $\ell$-many 
$m$-ary function symbols. Interpret these functions in $M^{++}_0$ so that 
for each $\bar{s}^\prime_0 \in {^{\omega>}J}$ with $\tpqf(\bar{s}^\prime_0, \emptyset, J) = \tpqf(\bar{s}_0, \emptyset, I)$,  and for  
each $s \in \mcs$, 
\[ F^{M^{++}_0}_i (\bar{a}_{\bar{s}^\prime _0~^\smallfrown \langle s \rangle}) : i < \ell \rangle = \bar{b}_s. \]
Apply the Ramsey property to $M, \ma, M^{++}_0, \Phi$ and let $\Psi \geq \Phi$ be the template returned. 

Let $N = \GEM(J, \Psi)$. 
Note that in $N \rstr \tau(T)$ there will be an automorphic image $A^\prime$ of $A$, which is 
named by constants in the $\GEM$-model $N$. 
For each $s \in \mcs$, let 
\[ \bar{b}^\prime_s = \langle F^{N}_i (\bar{a}_{\bar{s}_0 ~ ^\smallfrown \langle s \rangle}) : i <\ell \rangle. \]
To match the notation of Definition \ref{e5}: for some, equivalently every, $s \in \mcs$, let $\xr = \tpqf(s, \emptyset, J[\bar{s}_0]) 
=\tpqf(t, \emptyset, J[\bar{s}_0])$.   
Recall that $\{ s \in Q^{J[\bar{s}_0]}_\xr \}$ denotes $\{ s \in J, \tpqf(s, \emptyset, J[\bar{s}_0]) = \xr \}$. 
Then by the Ramsey property, 
\[ \mb^\prime = \langle \bar{b}^\prime_s : s \in Q^{J[\bar{s}_0]}_\xr \rangle \]
is an $\xr$-indiscernible sequence, and [because $A^\prime$ is named by constants in 
$N$] it is $\xr$-indiscernible over $A^\prime$, the copy of $A$ in $N$. 
Moreover, by the Ramsey property and the choice of our original sequence $\mb$, 
\[ \{ \vp(x,\bar{b}^\prime_s) : s \in Q^{J[\bar{s}_0]}_\xr \} \] will be $1$-consistent but 
$k$-inconsistent. 
Finally, observe that we may without loss of generality assume $\bar{a}$ from (\ref{e:param})  
belongs to the sequence $\mb^\prime$, as follows. By the Ramsey property, 
the type of $A, \bar{a}$ in the monster model for $T$ will be the same as 
that of $A^\prime, \bar{b}^\prime_t$.  So we may move $A^\prime$ to $A$ by an automorphism $G$, 
and then move $G(\bar{b}^\prime_t)$ to $\bar{a}$ by an automorphism $F$ which fixes $A$ pointwise. 
Then the sequence $F(G(\mb^\prime))$ will witness the $(I_0, I_1, \xc)$-shearing of $\vp$ as desired.  
\end{proof}

For later quotation, we single out the special case of linear orders. 

\begin{cor} \label{needed1a}
Let $M$ be any model, $A \subseteq M$ 
and let $\vp(\bar{x}, \bar{a})$ be any formula of $M$. Suppose $\vp(\bar{x}, \bar{a})$ divides over $A$ in the 
usual sense. Let $\mk$ be the class of linear orders, $I$ any infinite member of $\mk$, and $\xc = (I, \mk)$. 
Then for any finite 
$I_0 \subseteq I$ and any $t \in I \setminus I_0$, writing $I_1 = I_0 \cup \{ t \}$, 
we have that $\vp(\bar{x}, \bar{a})$ ~  
$(I_0, I_1, \xc)$-shears. 
\end{cor}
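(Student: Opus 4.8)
The plan is to derive Corollary \ref{needed1a} directly from Claim \ref{needed1} (``Dividing implies shearing'') together with the remark immediately following it, which sharpens the conclusion of \ref{needed1}. First I would observe that the class $\mk$ of linear orders is an index model class in the sense of Definition \ref{d:imc}: it is closed under isomorphism, universal (a linear order is determined by its finite suborders), has no function symbols, and is Ramsey by the classical Ramsey theorem; moreover any infinite linear order $I$ extends into an $\aleph_0$-saturated one (e.g.\ a saturated dense linear order), and for $\mk$ the class of all linear orders, $\aleph_0$-saturated means $I$ embeds into $(\mathbb Q,<)$-like orders, so condition (5) of \ref{d:imc} holds. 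Next I would check that $\xc = (I,\mk)$ is a context in the sense of \ref{d:context} when $I$ is infinite: there are no function symbols so (1) is vacuous and $\cl(I)=I$; $I$ is nontrivial since an infinite linear order is not the definable closure of any finite subset (in pure order language, $\dcl(\bar t)=\bar t$); $I$ is reasonable because an element $s$ of an $\aleph_0$-saturated $J\supseteq I$ whose quantifier-free type over a finite $\bar t\subseteq I$ is realized only by $s$ must be a member of $\bar t$ (in a dense order, any gap not pinned down by $\bar t$ has infinitely many realizations), hence $s\in\bar t\subseteq\cl_I(\bar t)$; and $I$ is non-1-trivial for the same density reason — an element $s\notin\bar t$ falls in some open interval determined by $\bar t$ (or is $\pm\infty$ relative to $\bar t$), and in an $\aleph_0$-saturated $J$ that interval contains infinitely many points of the same quantifier-free type over $\bar t$. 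So $\xc$ is a legitimate context.

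With $\xc$ established as a context, I would simply invoke Claim \ref{needed1}: since $\vp(\bar x,\bar a)$ divides over $A$ in the usual sense (equivalently, in the monster model of the complete theory $T=\operatorname{Th}(M)$, after identifying $M$ with an elementary submodel of $\mathfrak C_T$ and $\bar a,A$ with their images), the claim gives that $\vp(\bar x,\bar a)$ $\xc$-shears over $A$. The remark following \ref{needed1} states precisely what is needed for the uniformity in $I_0$ and $t$: the proof of \ref{needed1} in fact works for \emph{any} prescribed finite $I_0\subseteq I$ and \emph{any} $t\in I\setminus\cl(I_0)=I\setminus I_0$, yielding that $\vp$ shears over $A$ for $(I_0,I_1,\xc)$ with $I_1=I_0\cup\{t\}$. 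Since in the pure-order language $\cl(I_0)=I_0$, the hypothesis ``$t\notin\cl(I_0)$'' reduces to ``$t\notin I_0$'', which is exactly the hypothesis of \ref{needed1a}. Finally I would note that the passage from ``$M$ is any model'' to the monster-model formulation used in \ref{needed1} is harmless: dividing of $\vp(\bar x,\bar a)$ over $A$ in $M$ is by definition the existence of an $A$-indiscernible sequence $\langle \bar a_n\rangle$ with $\bar a_0=\bar a$ and $\{\vp(\bar x,\bar a_n):n<\omega\}$ $k$-inconsistent, and such a sequence exists in $M$ (or an elementary extension) iff it exists in $\mathfrak C_T$, so the two notions coincide.

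I do not anticipate a genuine obstacle here — the corollary is essentially a restatement of \ref{needed1} in the special case $\mk={}$linear orders, recorded separately ``for later quotation.'' The only points requiring a line of justification are (a) verifying that $(I,\mk)$ meets the axioms of an index model class and a context, which is routine for linear orders and their densely-ordered $\aleph_0$-saturated companions, and (b) noting that $\cl(I_0)=I_0$ in the function-free order language, so the ``$t\notin\cl(I_0)$'' condition in the remark becomes the stated ``$t\notin I_0$''. If anything is mildly delicate, it is making sure the notion of $\aleph_0$-saturated used in Definition \ref{d:imc}(5) and \ref{d:context}(3)--(4) (``countably homogeneous and countably universal for $\mk$'') is the one satisfied by a countable dense linear order without endpoints when $\mk$ is all linear orders; but this is standard and already implicitly used throughout the section, so I would simply cite it rather than belabor it.

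\begin{proof}[Proof of \ref{needed1a}]
The class $\mk$ of linear orders is an index model class (Definition \ref{d:imc}): it is closed under isomorphism, universal, has no function symbols, every infinite linear order extends to an $\aleph_0$-saturated one, and it is Ramsey by the classical Ramsey theorem. For $I$ infinite, $\xc = (I,\mk)$ is a context (Definition \ref{d:context}): since there are no function symbols, $\cl(I) = I$ and clause (1) is vacuous; an infinite linear order is not the definable closure of any finite subset, so $I$ is nontrivial; and density in an $\aleph_0$-saturated $J \supseteq I$ shows $I$ is reasonable and non-1-trivial, as any $s \in J$ lying outside $\bar t \subseteq I$ in the order sense falls in an interval determined by $\bar t$ that contains infinitely many realizations of $\tpqf(s, \bar t, J)$, while if $s$ is the unique such realization it must be an element of $\bar t$, hence in $\cl_I(\bar t)$.

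Now apply Claim \ref{needed1}: since $\vp(\bar x, \bar a)$ divides over $A$ in $M$ in the usual sense, it divides over $A$ in the monster model of $T = \operatorname{Th}(M)$, so by \ref{needed1} it $\xc$-shears over $A$. By the remark following \ref{needed1}, the argument works for any prescribed finite $I_0 \subseteq I$ and any $t \in I \setminus \cl(I_0)$, yielding that $\vp(\bar x, \bar a)$ shears over $A$ in $M$ for $(I_0, I_1, \xc)$ with $I_1 = I_0 \cup \{ t \}$. Since $\cl(I_0) = I_0$ in the function-free language of linear orders, the condition $t \notin \cl(I_0)$ is just $t \notin I_0$, which is the hypothesis. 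This gives the conclusion.
\end{proof}
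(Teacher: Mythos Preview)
Your proposal is correct and matches the paper's intent: the corollary is stated without proof in the paper, introduced simply as ``for later quotation, we single out the special case of linear orders,'' so it is meant to follow immediately from Claim \ref{needed1} and the remark after it. Your verification that linear orders form an index model class and that $(I,\mk)$ is a context, followed by the observation that $\cl(I_0)=I_0$ in the function-free order language, is exactly the routine unpacking the paper leaves implicit.
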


\vspace{4mm}

\subsection*{Discussion and examples}

We outline here some additional results (originally part of the present paper, but moved to a separate manuscript for reasons of 
space) to give the reader some idea of the landscape. 

The definition of ``$\xc$-unsuperstable'' is given in the next section, and for now can be understood as a strong version of ``shearing occurs''. 

Recall that $T_{3,2}$ is the theory of the generic tetrahedron-free three-hypergraph.
(We have kept the notation consistent with our earlier papers. In his work Ulrich has suggested a reasonable notational change, adding one to the subscripts.) 

\begin{fact} \label{fact15}
For each $n \geq 2$, let $T_{n,1}$ be the theory of the generic $K_{n+1}$-free graph.  Then $T_{n,1}$ is not simple.  
\end{fact}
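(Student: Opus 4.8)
The statement is the classical fact that the generic $K_{n+1}$-free graph (the Henson graph, for $n \geq 2$) is not simple. The plan is to exhibit the tree property, or equivalently to exhibit a formula with the order property witnessing non-stability together with a configuration showing non-simplicity; the cleanest route is to directly verify $TP_2$ or the tree property for the edge formula $\varphi(x;y) = R(x,y)$, since in a generic $K_{n+1}$-free graph one has enormous freedom to realize partial types that merely forbid too-large cliques. Concretely, I would build a tree $\langle \bar a_\eta : \eta \in {}^{\omega>}\omega\rangle$ of parameters such that along each branch the type $\{ R(x, a_{\eta\restriction k}) : k < \omega\}$ is consistent, while for incomparable $\eta, \nu$ the pair $\{ R(x,a_\eta), R(x,a_\nu)\}$ is inconsistent — the latter arranged by placing $a_\eta$ and $a_\nu$ so that they already have $n-1$ common neighbours forming a clique, so that a common neighbour $x$ would complete a $K_{n+1}$. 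That this tree exists is guaranteed by the extension axioms of the model-completion: one never needs to realize a type that itself contains a $K_{n+1}$, and the forbidden configurations are purely ``local'' in the sense the model completion's axioms handle.

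\textbf{Key steps.} First, recall the axiomatization: $T_{n,1}$ is the model completion of the theory of $K_{n+1}$-free graphs, so it satisfies the extension axioms saying that for every finite $K_{n+1}$-free configuration of a vertex $v$ over a finite set $B$ (i.e. a choice of neighbours and non-neighbours of $v$ in $B$ such that adding $v$ creates no $K_{n+1}$), there is such a $v$. Second, construct the parameter tree: fix a clique $\{c_1,\dots,c_{n-1}\}$ (possible since $n-1 < n+1$), and choose all the $a_\eta$ to be common neighbours of $c_1,\dots,c_{n-1}$, with no edges among the $a_\eta$'s themselves and no edges to the $c_i$'s beyond these — this is $K_{n+1}$-free since any clique in $\{c_i\} \cup \{a_\eta\}$ uses at most the $n-1$ $c_i$'s and at most one $a_\eta$ (the $a$'s being pairwise non-adjacent), giving size $\leq n$. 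Third, check branch-consistency: along a branch $\eta \in {}^\omega\omega$, the type $p_\eta = \{R(x,c_i) : i<n-1\} \cup \{R(x, a_{\eta\restriction k}) : k<\omega\}$ asks for a vertex adjacent to $c_1,\dots,c_{n-1}$ and to a chain of $a$'s; any finite subset is $K_{n+1}$-free (a clique among $\{x, c_i's, a's\}$ would again use $\leq n-1$ $c_i$'s, $\leq 1$ of the $a$'s, plus $x$, total $\leq n+1$ — wait, that is $n+1$, so one must be slightly more careful and instead only demand adjacency to the $c_i$'s for, say, $n-2$ of them, or use that along the branch the $a$'s are non-adjacent so any clique with $x$ has size $\leq (n-1)+1+1$; the correct bookkeeping is that the clique uses all $n-1$ of the $c_i$, plus $x$, plus at most one $a$, which is $n+1$) — so in fact I would only include $n-2$ of the $c_i$'s in the branch type, or equivalently only require the $a_\eta$ to be common neighbours of $c_1, \dots, c_{n-2}$ while $c_{n-1}$ is reserved; then a branch vertex $x$ together with $c_1,\dots,c_{n-2}$ and one $a$ is a clique of size $n$, which is fine. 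Fourth, check incomparable-inconsistency: arrange that for $\eta \perp \nu$, the vertices $a_\eta, a_\nu$ ARE adjacent, and together with $c_1,\dots,c_{n-2}$ they form a $K_n$; then a common neighbour $x$ would yield $K_{n+1}$, so $\{R(x,a_\eta),R(x,a_\nu)\} \cup \{R(x,c_i):i<n-2\}$ is inconsistent, and since the $c_i$-part is in every branch type this gives genuine $2$-inconsistency across branches. This is exactly the tree property with $k=2$ (indeed $TP_2$), so $T_{n,1}$ is not simple.

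\textbf{Main obstacle.} The delicate point — and the only real content — is the clique bookkeeping: one must choose the ``backbone'' clique size and the edge pattern among the $a_\eta$'s (adjacent iff incomparable, say) so that (i) the whole parameter configuration stays $K_{n+1}$-free, (ii) each branch type is consistent, and (iii) incomparable pairs are $2$-inconsistent over the common part. Using $n-2$ backbone vertices $c_1,\dots,c_{n-2}$ with all $a_\eta$'s adjacent to all of them, the $a_\eta$'s adjacent to each other exactly when $\eta,\nu$ are $\perp$-incomparable (equivalently: make the $a$'s along each branch an independent set but let branching nodes be mutually adjacent — one must double-check this ``adjacent iff incomparable'' relation is itself $K_{n+1}$-free on the tree, which it is because any clique of $a$'s is an antichain-free, i.e. a set of pairwise-incomparable nodes, and the tree ${}^{\omega>}\omega$ has arbitrarily large such sets, so one must instead restrict to pairwise-incomparable sets of size $\leq 2$ — hence take the $a_\eta$'s to be adjacent iff they are $\perp$-incomparable but only over a binary-branching tree and further only allow cliques of controlled size; the robust fix is to make the $a_\eta$'s pairwise NON-adjacent and instead witness inconsistency differently). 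The genuinely clean construction, which I would ultimately write up, keeps all $a_\eta$ pairwise non-adjacent, uses $n-1$ backbone clique vertices $c_1,\dots,c_{n-1}$, includes all of $R(x,c_i)$ in every branch type, and notes that then a single extra edge $R(x,a_\eta)$ already gives $x$ together with $c_1,\dots,c_{n-1}$ a forbidden $K_{n+1}$ — so branch types are realized by the extension axioms precisely because such an $x$ has exactly $n-1$ clique-neighbours among the parameters and the $a$'s are independent; meanwhile for incomparable $\eta,\nu$ one instead puts an edge between $a_\eta$ and $a_\nu$ through a single shared ``late'' vertex, etc. Since the referee-level details of which edges to include is exactly the sort of finite combinatorial check that the model-completion's extension axioms are designed to discharge, the proof reduces to: (1) state the extension axioms; (2) write down one explicit $K_{n+1}$-free tree of parameters with the $2$-inconsistency and branch-consistency properties; (3) invoke genericity. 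The expected length is short.
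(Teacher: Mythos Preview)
Your proposal has a genuine gap: none of the constructions you sketch actually works, and the obstacle is structural, not just bookkeeping. Consider your final ``clean'' version with $n-1$ backbone clique vertices $c_1,\dots,c_{n-1}$, all $a_\eta$ adjacent to every $c_i$, and branch type containing $\{R(x,c_i):i<n\}\cup\{R(x,a_\eta)\}$. Then $\{x,c_1,\dots,c_{n-1},a_\eta\}$ is already a $K_{n+1}$, so a \emph{single} instance $\psi(x,a_\eta)$ is inconsistent --- there is no branch-consistency to speak of. Backing off to $n-2$ backbone vertices (your earlier attempt) fixes branch-consistency but breaks the sibling structure: for $\psi(x,a)\land\psi(x,b)$ to be inconsistent you need $a,b$ adjacent (so $\{a,b,c_1,\dots,c_{n-2}\}$ is a $K_n$), but then any three pairwise-adjacent siblings give $K_{n+1}$ among the parameters. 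More generally, with $m$ backbone vertices the sibling set must be a graph with clique number $\le n-m$ (for $K_{n+1}$-freeness of parameters) yet with every $k$-subset containing a $K_{n-m}$ (for $k$-inconsistency); Ramsey's theorem makes this impossible on an infinite sibling set. So the single-variable formula $R(x,y)$, even over a fixed backbone, cannot witness the tree property in $T_{n,1}$ for $n\ge 3$.

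The paper's argument avoids this by letting \emph{all} the parameters vary: it takes $\varphi(x;y_0,\dots,y_{n-1})=\bigwedge_{\ell<n}R(x,y_\ell)$ and builds an indiscernible sequence of $n$-tuples $\bar a_i=\langle a_i^0,\dots,a_i^{n-1}\rangle$ with $R(a_i^s,a_j^t)$ iff $s\ne t$ and $i\ne j$ (the ``complement of a matching'' in the $n=2$ case). Any clique among the $a$'s needs distinct superscripts and distinct subscripts, so has size $\le n$; each $\varphi(x,\bar a_i)$ is consistent since the entries of $\bar a_i$ are pairwise nonadjacent; and $\{\varphi(x,\bar a_{i_0}),\dots,\varphi(x,\bar a_{i_{n-1}})\}$ is inconsistent because $x,a_{i_0}^0,a_{i_1}^1,\dots,a_{i_{n-1}}^{n-1}$ would form a $K_{n+1}$. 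This gives $n$-dividing, and since over any $\bar a_i$ one can reproduce an isomorphic configuration, it iterates to the tree property. The moral is that the extra parameter variables are doing real work: they let you encode the forbidden configuration across \emph{several} members of the indiscernible sequence rather than forcing it into the static backbone.
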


\begin{fact}[Hrushovski c. 2002, see \cite{hrushovski1}]  \label{t:trd}
For $n>k\geq 2$, $T_{n,k}$ is simple unstable with only trivial dividing $($i.e. only dividing coming from equality$)$. 
\end{fact}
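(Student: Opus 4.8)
The result is due to Hrushovski; see \cite{hrushovski1}. The plan is to reconstruct it in three steps: set up the Fra\"iss\'e framework, read off instability directly, and then obtain simplicity together with the description of dividing by checking the Kim--Pillay axioms for a ``free amalgamation'' independence relation.

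First I would recall the combinatorial setup. Let $\mathcal{K}_{n,k}$ be the class of all finite $(k+1)$-uniform hypergraphs with no $(n+1)$ vertices all of whose $(k+1)$-subsets are hyperedges. One checks that $\mathcal{K}_{n,k}$ has HP and JEP and, crucially, the \emph{free} amalgamation property: given $A \subseteq B, C$ in $\mathcal{K}_{n,k}$, the structure $D$ with universe $B \cup_A C$ and no hyperedges beyond those of $B$ and of $C$ lies in $\mathcal{K}_{n,k}$. Indeed, a forbidden set $S \subseteq D$ with $|S| = n+1$ meeting both $B \setminus A$ (in some $b$) and $C \setminus A$ (in some $c$) would, since $n+1 \geq k+1$, contain a $(k+1)$-subset through both $b$ and $c$, and that ``mixed'' subset is not a hyperedge of $D$; so $S$ lies in $B$ or in $C$, a contradiction. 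Hence $\mathcal{K}_{n,k}$ is a Fra\"iss\'e class; let $\mathfrak{M}_{n,k}$ be its limit, $T_{n,k} = \operatorname{Th}(\mathfrak{M}_{n,k})$, which has quantifier elimination and, by genericity, trivial algebraic closure, $\acl(X) = X$ for all $X$.

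Second, instability. Fix $k-1$ distinct vertices $\bar c$ and set $\varphi(x,y;\bar c) := R(x,y,\bar c)$. Given any finite graph $(V,E)$, form the hypergraph on $V \sqcup \bar c$ whose hyperedges are exactly $\{v, v', c_1, \dots, c_{k-1}\}$ for $\{v,v'\} \in E$. This lies in $\mathcal{K}_{n,k}$: a forbidden $S$, $|S| = n+1$, would have $|S \cap V| \geq (n+1) - (k-1) = n-k+2 \geq 3$ (using $n > k$), and then, since $k+1 \geq 3$ (using $k \geq 2$), some $(k+1)$-subset of $S$ has at least three vertices in $V$, hence is not a hyperedge of the constructed structure. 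So this structure embeds in $\mathfrak{M}_{n,k}$, on which $\varphi(x,y;\bar c)$ then defines $E$. Varying $(V,E)$ and using QE, $\varphi$ has the independence property, so $T_{n,k}$ is unstable.

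Third, the substantive part: simplicity and triviality of dividing. For $A, B, C$ in $\mathfrak{C} \models T_{n,k}$, say $A$ is \emph{free from $B$ over $C$} when
\[ \acl(ABC) \text{ is the free amalgam of } \acl(AC) \text{ and } \acl(BC) \text{ over } \acl(C), \]
i.e. $\acl(AC) \cap \acl(BC) = \acl(C)$ and no hyperedge of $\mathfrak{C}$ meets both $\acl(AC) \setminus \acl(C)$ and $\acl(BC) \setminus \acl(C)$. I would verify invariance, monotonicity, base monotonicity, transitivity, normality, symmetry, finite character, and, from QE and genericity, the extension property; and then \emph{local character}: every $p \in S(A)$ does not fork over some finite $A_0 \subseteq A$. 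By the Kim--Pillay theorem this yields that $T_{n,k}$ is simple with nonforking equal to this free-amalgamation relation, so dividing is ``trivial'' in the sense that it is governed entirely by disjointness of algebraic closures plus absence of cross-hyperedges — there is no finer forking. The hard part will be precisely local character (equivalently, ruling out $\mathrm{TP}_2$ and the tree property) for the free-amalgamation relation: one must show that the constraints imposed by the forbidden $(n+1)$-configurations cannot propagate along an infinite branching pattern, which is exactly where the hypothesis $n > k$ is used — a forbidden configuration is not forced by $k$-element interactions alone — and where $k \geq 2$ separates this case from the non-simple generic graphs $T_{n,1}$ of Fact \ref{fact15}.
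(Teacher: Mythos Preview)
Your candidate independence relation is wrong, and this is a genuine gap rather than a detail to be filled in. Free amalgamation is \emph{not} nonforking in $T_{n,k}$ (nor even in the random graph): it fails local character. Take $B$ an uncountable set carrying no hyperedges at all and $a$ with $R(a,\bar b)$ for every $\bar b \in [B]^{k}$; this is tetrahedron-free since any $(n+1)$-set through $a$ contains a $(k+1)$-subset lying in $B$, which is a non-edge. Then for $a$ to be free-amalgamation independent from $B$ over $B_0$ you would need every $b\in B$ involved in an edge with $a$ to lie in $B_0$, forcing $B_0=B$. So the Kim--Pillay verification you sketch cannot go through, and in particular the step you flag as ``the hard part'' (local character) is not hard but false for this relation.

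Relatedly, your reading of ``trivial dividing'' is weaker than what is being asserted. The claim is that \emph{no nonalgebraic formula divides at all}; forking independence is literally $A\cap B\subseteq \acl(C)=C$, with no cross-hyperedge clause. Concretely, $R(x,\bar b)$ does not divide over $\emptyset$: along any indiscernible $\langle \bar b_i\rangle$ the set $\{R(x,\bar b_i):i<\omega\}$ is realized by a fresh $x$ carrying exactly those edges, and no forbidden $(n+1)$-clique through $x$ can arise because the $k$-element ``columns'' $\bar b_i$ are too small to supply three pairwise-overlapping $k$-sets. The paper's argument runs exactly along these direct lines: arrange the indiscernible sequence as columns $\bar a_i$, note that $\bigwedge_i \vp(x,\bar a_i)$ can only force instances $R(x,c_1,\dots,c_k)$ with all $c_j$ in a single column (this is where arity $k+1\ge 3$ is used, in contrast to the non-simple $T_{n,1}$), and observe that a forbidden configuration through $x$ would require such instances with parameters drawn from \emph{different} columns, which never happens. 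If you want to run a Kim--Pillay argument instead, the correct candidate is bare disjointness, and the real content then shifts to verifying the independence theorem over models for that relation in $T_{n,k}$---which is essentially the same combinatorial point in a different wrapper.
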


The following is worked out in \cite{MiSh:F2061}: 

\begin{expl}[\cite{MiSh:F2061}] \label{ex:t32}
There is a countable context $\xc$ such that the random graph is $\xc$-superstable, and 
$T_{3,2}$ is $\xc$-unsuperstable;  in $T_{3,2}$, this shearing arises from a formula which is a Boolean combination of 
positive instances of the edge relation.  
\end{expl}

The following general class of examples are constructed in \cite{MiSh:F2061}: 

\begin{expl}[\cite{MiSh:F2061}] \label{unary} Given any $n> k\geq 2$, there is a countable context $\xc$ 
such that $\trg$ is $\xc$-superstable and $T_{n,k}$ is not $\xc$-superstable.  
\end{expl}

\begin{concl} \label{e:t32}
Shearing is strictly weaker than dividing.  
\end{concl}

\begin{disc} \label{d:indr}
\emph{Since shearing is not the same as dividing in simple theories, it necessarily fails some of the usual properties of 
independence relations.}
\end{disc}

\vspace{5mm}

\section{Unsuperstability}  \label{s:unss}
\setcounter{theoremcounter}{0}

\begin{hyp} \label{m2x} \emph{ } 
\begin{enumerate}[1)]
\item $\xc$  is a context, so $I$ and $\mk$ are fixed. 
\item In this section \underline{$I$ is countable}.  We may say: $\xc$ is a countable context. 
\item $J$ is $\aleph_0$-saturated, $I \subseteq J \in \mk$.
\item $T$ will vary, but will always be a complete first order theory.  
\item $\Upsilon_\xc$ denotes the templates proper for $\mk$. We will assume the templates 
$\Phi$ in question satisfy $T_\Phi \supseteq T$ and have Skolem functions for $T$,  i.e. belong to $\Upsilon_\xc[T]$. 
\item Note: when we write $\tpqf(\bar{s}, ...,J) = \tpqf(\bar{t}, ..., J)$ or something of the sort, it's understood 
that $\lgn(\bar{s}) = \lgn(\bar{t})$.  
\end{enumerate}
\end{hyp}

In this section we define ``$T$ is (un)superstable for the countable context $\xc$'' and prove Theorem \ref{t:dir2}.\footnote{We might have said 
``unsupersimple.''} To do so 
we step back from our assumption that $I$ must be the index set for the skeleton of a given $\GEM$ model, to 
simply using $I$ (or a saturated $J$ extending it) as the index set for \emph{some} $\mk$-indiscernible sequence which 
will witness e.g. inconsistency or dividing.  Notice that in the next definition, $I$ is not a priori an input 
to a GEM model, and the $B_n$ are just sets in the monster model, a priori not related to the $I_n$'s 
beyond what is written there. 

\begin{defn} \label{m5a}  Let $\xc$ be a countable context. 
We say $T$ is \emph{unsuperstable} for $\xc$ when there are: 

\begin{enumerate}
\item[(a)] 
an increasing sequence of nonempty finite sets $\langle I_n : n <\omega \rangle$ with 
$I_m \subseteq I_{n} \subseteq I$ for $m < n <\omega$ and $\bigcup_n I_n = I$, 
which are given along with a choice of enumeration $\bar{s}_n$ for each $I_n$ 
where $\bar{s}_n \tlf \bar{s}_{n+1}$ for each $n$ 

\item[(b)]  
an increasing sequence of nonempty, possibly infinite, sets $B_n \subseteq B_{n+1} \subseteq \mathfrak{C}_T$ in the monster 
model for $T$, with $B := \bigcup_n B_n$

\item[(c)] and a partial type  $p$ over $B$, such that  

\end{enumerate}
\[ p \rstr B_{n+1}~~ (I_n, I_{n+1})\mbox{-shears over }B_n. \]
\end{defn}

\begin{rmk}
To extend this definition to $I$ each of whose strict subsets is finitely generated,  add that $\cl(I_n) \subsetneq \cl(I_{n+1})$
and make the parallel changes to the proofs so that the $I_n$'s list finite sequences of generators rather than their closures. 
\end{rmk}

\begin{defn}
When $T$ is not unsuperstable for $\xc$, we say \emph{$T$ is superstable for $\xc$}, or just \emph{$\xc$-superstable}. 
\end{defn}

\begin{rmk}
\emph{Definition \ref{m5a} uses countability of $I$ in an essential way, as it is the union of an increasing chain of finite sets.}
\end{rmk}

For later reference, we state the local version separately.  Comparing to \ref{m5a}, note 
``$T$ is superstable for the countable context $\xc$'' is just the case where 
$(T, \Delta)$ is $\xc$-superstable and $\Delta$ is the set of all formulas of the language.

\begin{defn} \label{m5aa}  Let $\xc$ be a countable context and $\Delta$ a set of formulas of $T$.  
We say 
\[ \mbox{ $(T, \Delta)$ is \emph{unsuperstable} for $\xc$ } \]when $\ref{m5a}$ holds in the case that we replace 
``$\ts(B)$'' in $\ref{m5a}(c)$ by ``$\ts_\Delta(B)$,'' i.e., the type $p$ in $\ref{m5a}(c)$ 
may be taken to be a $\Delta$-type. 
\end{defn}

\begin{claim} 
\label{needed2}
Suppose $(T, \Delta)$ is not supersimple in the usual sense and $\xc = (I, \mk)$ is any countable context. 
Then $(T, \Delta)$ is unsuperstable for $\xc$. 
\end{claim}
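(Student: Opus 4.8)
The plan is to unpack ordinary non-supersimplicity into an infinite dividing chain and then convert each link into a shearing step by a single application of Claim~\ref{needed1} together with the monotonicity Claim~\ref{c:monot}; countability of $I$ is used only to align the index sets. Unpacking the hypothesis: since $(T,\Delta)$ is not supersimple in the usual sense, there are an increasing chain $\langle A_n : n<\omega\rangle$ of subsets of $\mathfrak{C}_T$ and a partial $\Delta$-type $q=q(\bar x)$ over $A:=\bigcup_n A_n$ such that, for every $n$, the partial type $q\rstr A_{n+1}$ divides over $A_n$; concretely, for each $n$ there is a finite conjunction $\vp_n(\bar x,\bar c_n)$ of formulas of $q\rstr A_{n+1}$ (so $\bar c_n$ is a finite tuple from $A_{n+1}$ and $q\vdash \vp_n(\bar x,\bar c_n)$) which divides over $A_n$ in $\mathfrak{C}_T$. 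Since the sets in Definition~\ref{m5a}(b) are allowed to be infinite, no shrinking of bases is needed: at the end we will simply take $B_n:=A_n$ and $p:=q$.

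Next I would choose the index data. By Hypothesis~\ref{m2x}, $I=I_\xc$ is countable, say $I=\{e_k:k<\omega\}$, and by nontriviality of the context (Definition~\ref{d:context}(2)) we have $\cl(\bar t,I)\subsetneq I$ for every finite $\bar t\subseteq I$. Build recursively an increasing chain $I_0\subsetneq I_1\subsetneq\cdots$ of finite (in the presence of function symbols: finitely generated and closed) subsets of $I$: given $I_n$, use nontriviality to pick $t_n\in I\setminus\cl(I_n)$ and set $I_{n+1}:=\cl(I_n\cup\{t_n,e_n\})$. Then $I_n\subsetneq I_{n+1}$, $\bigcup_n I_n=I$, and each $I_{n+1}$ contains a point $t_n\notin\cl(I_n)$; fix enumerations $\bar s_n$ of $I_n$ with $\bar s_n\tlf\bar s_{n+1}$.

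Now I would combine these. Fix $n$. The formula $\vp_n(\bar x,\bar c_n)$ divides over $A_n$ in $\mathfrak{C}_T$, so by the Remark following Claim~\ref{needed1}, applied with the finite set $I_n$, the point $t_n\notin\cl(I_n)$, the set $I_n\cup\{t_n\}$, and the base $A_n$, the formula $\vp_n(\bar x,\bar c_n)$ $(I_n,\,I_n\cup\{t_n\},\,\xc)$-shears over $A_n$. Since $I_n\subseteq I_n$ and $I_n\cup\{t_n\}\subseteq I_{n+1}$, Claim~\ref{c:monot} upgrades this to: $\vp_n(\bar x,\bar c_n)$ $(I_n,I_{n+1},\xc)$-shears over $A_n$. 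As $\vp_n(\bar x,\bar c_n)$ is a conjunction of formulas of $q\rstr A_{n+1}$, the partial type $q\rstr A_{n+1}$ $(I_n,I_{n+1})$-shears over $A_n$. Taking $\langle I_n\rangle$, $\langle\bar s_n\rangle$, $B_n:=A_n$, $B:=A$ and $p:=q$ (a $\Delta$-type), all clauses of Definition~\ref{m5a}, i.e.\ of Definition~\ref{m5aa}, are met, so $(T,\Delta)$ is unsuperstable for $\xc$.

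I expect the main obstacle to be bookkeeping rather than anything conceptual: one must arrange the $I_n$ to exhaust $I$ while still adding, at each step, a point outside $\cl(I_n)$ (this is where countability and nontriviality of the context are both essential), and one must confirm that ``$q\rstr A_{n+1}$ shears over $A_n$'' in Definition~\ref{m5a}(c) is indeed witnessed, as here, by a single implied formula shearing. If instead the intended reading of type-shearing is the stronger ``simultaneous'' one---an $\mk$-indiscernible sequence along which the whole of $q\rstr A_{n+1}$ is re-instantiated inconsistently---one reruns the proof of Claim~\ref{needed1} verbatim, starting from an ordinary $A_n$-indiscernible sequence enumerating copies of the parameter set of $q\rstr A_{n+1}$ chosen so that its $\bar c_n$-coordinates already witness the dividing of $\vp_n$; the Ramsey upgrade in that proof then carries the inconsistency of the $\vp_n$-subfamily, hence of the full re-instantiated type, to the $\mk$-indiscernible sequence.
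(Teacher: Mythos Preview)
Your proposal is correct and takes essentially the same approach as the paper, which simply says ``Immediate from \ref{needed1}; the countability of the context is used only in the definition of $\xc$-superstable.'' You have spelled out the bookkeeping the paper leaves implicit---building the $I_n$'s to exhaust $I$ while securing at each step a point outside $\cl(I_n)$, and invoking the monotonicity Claim~\ref{c:monot} to pass from $(I_n,I_n\cup\{t_n\})$ to $(I_n,I_{n+1})$---and your reading of ``the type shears'' as ``some implied formula shears'' matches how the paper uses the notion elsewhere (e.g.\ in the proofs of \ref{m20aa} and \ref{t:dir2}).
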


\begin{proof} 
Immediate from \ref{needed1}; the countability of the context is used only in the definition of $\xc$-superstable. 
\end{proof}

For the complementary claim, see \ref{needed3} below. 

\begin{obs} \label{cont-for-t}
For any theory $T$ and context $\xc$, for any $\Phi \in \Upsilon_\xc$, there is  $\Psi$ with $\Phi \leq \Psi \in \Upsilon_\xc[T]$.  
\end{obs}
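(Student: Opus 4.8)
The plan is to superimpose a model of $T$ onto a $\GEM$ model of $\Phi$ on a common universe, add Skolem functions, and then invoke the Ramsey property \ref{d:ramsey-exp}. To begin, use \ref{d:imc}(5) to fix an $\aleph_0$-saturated $J\in\mk$ with $I\subseteq J$, and let $M=\GEM(J,\Phi)$ with skeleton $\ma=\langle \bar a_t:t\in J\rangle$. Without loss of generality (renaming the non-logical symbols of $T$, in keeping with the paper's convention of passing to disjoint signatures) assume $\tau(T)\cap\tau_\Phi=\emptyset$. The only case of interest being when $T$ has infinite models, we may moreover assume, increasing $J$ if necessary, that $|M|\ge \aleph_0+|\tau(T)|$, so that $T$ has a model $N_0$ whose universe is that of $M$. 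Let $M^*$ be the $(\tau_\Phi\cup\tau(T))$-structure with $M^*{\rstr}\tau_\Phi=M$ and $M^*{\rstr}\tau(T)=N_0$, and let $M^+$ be the expansion of $M^*$ obtained by adjoining, for each formula $\varphi(\bar x,y)$ of $\tau_\Phi\cup\tau(T)$, a function symbol interpreted as a Skolem function for $\varphi$. Then $M^+$ is an expansion of $M=\GEM(J,\Phi)$ in the sense of \ref{d:ramsey-exp}(d).

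Apply Corollary \ref{d:ramsey-exp} to $J$, $\Phi$, $M$ with skeleton $\ma$, and $N^+:=M^+$. It returns a template $\Psi$ proper for $\mk$ with $\tau(\Psi)\supseteq\tau(M^+)\supseteq\tau_\Phi\cup\tau(T)$ and $\Psi\ge\Phi$, that is, $\Phi\leq_\Upsilon\Psi$; moreover $\Psi$ reflects $\ma$ as in \ref{d:r1}(ii) (with $\ma$ in place of $\mb$ there). Since $M^+$ carries Skolem functions for its entire language, $\Psi$ may be taken to satisfy our blanket conventions \ref{c:nice}; in particular $T_\Psi$ is a complete theory and, by \ref{c:nice}(b)(1),(3), $T_\Psi=\mathrm{Th}_{\tau_\Psi}(\GEM(J,\Psi))$. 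It remains to see that $\tau_\Psi\supseteq\tau(T)$ (clear), that $T_\Psi\supseteq T$, and that $T_\Psi$ has Skolem functions for $T$, which together give $\Psi\in\Upsilon_\xc[T]$ and complete the proof.

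For the last two points it suffices to apply the reflection property \ref{d:r1}(ii) in the degenerate case $n=m=0$, i.e.\ to sentences: for every $\tau(M^+)$-sentence $\chi$, if $M^+\models\chi$ then $\GEM(J,\Psi)\models\chi$. Running this on both $\chi$ and $\neg\chi$ yields $\GEM(J,\Psi){\rstr}\tau(M^+)\equiv M^+$. Hence $\GEM(J,\Psi){\rstr}\tau(T)\equiv M^+{\rstr}\tau(T)=N_0\models T$, so $T\subseteq\mathrm{Th}_{\tau(T)}(\GEM(J,\Psi))\subseteq T_\Psi$. Likewise, for each $\tau(T)$-formula $\varphi(\bar x,y)$ with chosen Skolem term $\sigma_\varphi$, the universal $\tau_\Psi$-sentence $\forall\bar x\,\forall y\,(\varphi(\bar x,y)\rightarrow\varphi(\bar x,\sigma_\varphi(\bar x)))$ holds in $M^+$, hence in $\GEM(J,\Psi)$, hence lies in $T_\Psi$; so $T_\Psi$ has Skolem functions for $T$.

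The combinatorial work is carried entirely by the (already established) Ramsey property, so I do not expect a serious obstacle; the one place meriting care is the last paragraph, where the \emph{semantic} demands on $T_\Psi$ — that it interpret $T$ and have Skolem functions for $T$ — must be teased out of the one-directional reflection property, which is why it is worth isolating the $n=0$ instance and applying it to both $\chi$ and $\neg\chi$. The remaining points are routine: the harmless cardinality bookkeeping that lets $N_0$ live on the universe of $M$, and the standard fact that a template equipped with Skolem functions may be assumed to meet \ref{c:nice}.
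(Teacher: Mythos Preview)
Your proof is correct and follows essentially the same approach as the paper: expand $\GEM(J,\Phi)$ to a model of $T$ with Skolem functions (the paper does this by passing to an elementary extension of $M$ large enough to carry a model of $T$, you do it by enlarging $J$ to make $M$ itself large enough) and then apply the Ramsey property. Your explicit verification that $T_\Psi\supseteq T$ and has Skolem functions for $T$ via the $n=0$ case of reflection is more detailed than the paper's proof, which simply asserts the conclusion.
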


\begin{proof}
Let $J \supseteq I$ be $\aleph_0$-saturated. 
Let $M = \GEM(J, \Phi)$ with skeleton $\ma$. If $\Phi$ is not already in $\Upsilon_\xc[T]$, then without loss of generality, we may suppose the 
signature of $T$ and of $T_\Phi$ are disjoint. Let $N$ be an elementary extension of $M$ which may also 
be expanded to a model of $T$ with Skolem functions for $T$ (of course this expansion need not have anything to do with 
the structure on $N$). 
Let $N^+$ be this expansion.  Let $\Psi$ be the template returned by applying the Ramsey property to $N^+$, $\ma$, $\Phi$. 
Then $\Psi  \geq \Phi$ and $\Psi$ will be in $\Upsilon_\xc[T]$. 
\end{proof}

First we consider the case where $T$ is superstable for a given context, i.e., 
not unsuperstable. The larger role of $I$ mentioned above plays little role in this proof, since superstability 
ensures good behavior for \emph{all} relevant $I_n$'s and $B_n$'s, including those which have natural meaning in a 
$\GEM$ model.

\begin{claim} \label{m20aa}
Assume $T$ is superstable for the countable context $\xc$. Suppose ${\Phi \in \Upsilon_\xc[T]}$ and 
$M = \GEM_{\tau(T)}(I, \Phi)$.  Let $p \in \ts(M)$ be any type. Then there is $\Psi \geq \Phi$ such that 
$p$ is realized in $\GEM(I, \Psi)$. 
\end{claim}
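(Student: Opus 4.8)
The plan is to prove Claim \ref{m20aa} by reducing realization of $p$ to the weak-definability machinery of \S\ref{s:wd}, using the hypothesis of $\xc$-superstability to feed the contrapositive of a shearing-type statement. First I would recall that by Claim \ref{c:142} (and the corollaries following it), it suffices to show that $p$, or rather some sufficient fragment of it, has a weak definition over some finite $\bar{t}_* \in \inc(I)$ in some $\GEM(I,\Psi_0)$ with $\Psi_0 \geq \Phi$; then applying \ref{c:142} inside $\GEM(I,\Psi_0)$ (with $\Psi_0$ in place of $\Phi$) yields the desired $\Psi \geq \Psi_0 \geq \Phi$ realizing $p$. Actually, since $p$ is a full type over $M$ and $I$ need not be $\aleph_0$-saturated, the cleaner route is to build $\Psi$ directly by a construction that simultaneously handles all the formulas of $p$, using superstability to guarantee consistency is never destroyed along the way.

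The key steps, in order: (1) Fix a detailed enumeration $\langle \vp_\alpha(x,\bar\sigma_\alpha(\bar a_{\bar t_\alpha})) : \alpha < \kappa\rangle$ of $p$ and pass to an $\aleph_0$-saturated $J \supseteq I$ with skeleton extending $\ma$, setting $N = \GEM(J,\Phi)$. (2) Show that for the union $p$ to \emph{fail} to have a weak definition would, by the analysis in Discussion \ref{d-333} and Question \ref{first-q}, produce exactly the data of Definition \ref{m5a}: an increasing chain of finite $I_n$'s exhausting $I$ (coming from the finite supports $\bar t_*$ together with the $\bar t_\alpha$'s appearing in a cofinal sequence of formulas of $p$), an increasing chain of parameter sets $B_n$ (the parameters of those formulas), and a type $p\restriction B_{n+1}$ that $(I_n, I_{n+1})$-shears over $B_n$ — contradicting $\xc$-superstability. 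So by superstability, each relevant fragment of $p$ is ``non-shearing,'' hence (tracing through \ref{first-q} and \ref{d:gd}) weakly definable over a finite $\bar t_*$. (3) Collect these finite $\bar t_*$'s; since $p$ as a type has some cardinality $\kappa$ we must be a bit careful, but the point is that superstability gives a uniform bound — there is no infinite strictly-shearing chain — so finitely (or at worst a bounded amount of) information suffices, and one assembles the weak definition function $F$ formula-by-formula as in Claim \ref{c:stable-def}. (4) Apply Claim \ref{c:142} to the weakly definable $p$ to get $\Psi \geq \Phi$ with $p$ realized in $\GEM(I,\Psi)$, and use Observation \ref{o:larger} for the monotonicity parenthetical.

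The main obstacle I expect is step (2): carefully extracting the Definition \ref{m5a} data from the failure of weak definability of the \emph{whole} type $p$, not just of individual formulas. The subtlety flagged repeatedly in the paper (the discussion after Conclusion \ref{c:stx}, and Claim \ref{c:143} versus the remark that each formula being weakly definable need not make the type so) is precisely that one must produce a \emph{single} increasing chain $\langle I_n\rangle$ and a \emph{single} chain $\langle B_n\rangle$ witnessing shearing at every level — i.e. one must diagonalize the potential obstructions into one $\omega$-indexed shearing chain. This is where countability of $I$ is essential (it lets us write $I = \bigcup_n I_n$ with $I_n$ finite), and where the real work lies: showing that if no finite $\bar t_*$ serves as a weak-definition support for $p$, then one can choose the $I_n$, $B_n$, and the fragment $p\restriction B_{n+1}$ so that the shearing in Definition \ref{e5} genuinely occurs at stage $n$ (1-consistency of the relevant instances coming from $p$ being a type, $k$-inconsistency coming from the failure of weak definability via the inconsistency in \ref{fds1}). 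The remaining steps are then routine given the earlier results. A secondary, more bookkeeping-level obstacle is matching up the ``over $A$'' / ``over $\bar t_*$'' parameter sets correctly so that Claim \ref{c:142}'s hypotheses are literally met; this is handled exactly as in the proof of \ref{needed1} by naming the relevant finite set by constants and invoking the Ramsey property \ref{d:ramsey-exp}.
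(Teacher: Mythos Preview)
Your approach is essentially the paper's, and the contrapositive in step (2) is exactly right: fix a listing $I = \bigcup_n I_n$ with $I_n$ finite increasing, set $B_n = \GEM_{\tau(T)}(I_n, \Phi)$, and observe that if for every $m$ there is $n > m$ with $p \rstr B_n$ $(I_m, I_n)$-shearing over $B_m$, then thinning to a subsequence gives the data of Definition~\ref{m5a}, contradicting $\xc$-superstability. So there exists a single $m_*$ after which no such shearing occurs. The paper then inlines the construction of Claim~\ref{c:142} rather than citing it, but your plan to cite it is fine.

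Where you go astray is step (3). There is nothing to ``collect'': the single finite set $I_{m_*}$ already serves as $\bar{t}_*$ for the \emph{entire} type $p$, not one $\bar{t}_*$ per formula or per fragment. Once you have $m_*$, form $q(\bar{x})$ by letting each $\bar{t}_\alpha$ in the detailed enumeration range over all $\bar{t} \in {^{\omega>}J}$ with $\tpqf(\bar{t}, \bar{s}_{m_*}, J) = \tpqf(\bar{t}_\alpha, \bar{s}_{m_*}, I)$. If $q$ were inconsistent, a finite conjunction --- hence, since $p$ is a type and thus closed under conjunction, a single formula of $p$ --- would witness $(I_{m_*}, I_n)$-shearing for some $n > m_*$, contradicting the choice of $m_*$. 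So $q$ is consistent and $(\bar{s}_{m_*}, F)$ is a weak definition of $p$ in the sense of \ref{d:gd}; now apply \ref{c:142}. The ``formula-by-formula as in \ref{c:stable-def}'' remark is a red herring: that claim concatenates finitely many finite supports for finitely many stable formulas, whereas here the support $I_{m_*}$ is uniform across all of $p$. Your worry about $\kappa = |p|$ is likewise misplaced --- the single $m_*$ is precisely what superstability buys, and it handles all of $p$ at once.
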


\begin{proof} 
Let $\langle i_n : n <\omega \rangle$ list $I$. Let $I_n = \{ i_k : k < n \}$. Let 
$B_n$ name the model $\GEM_{\tau(T)}(I_n, \Phi)$. Note that $\bigcup_n I_n = I$, 
$\bigcup_n B_n = M$, and $p \in \ts(M)$. 
We ask: is there $m < \omega$ such that for no $n > m$ does 
$p \rstr B_{n}$ $(I_m, I_{n})$-shears over $M_m$? 

If there is no such $m$, so we contradict superstability. 
More precisely, 
choose $n(i)$ increasing with $i$ such that $i = j+1$ implies 
$p \rstr B_{n(i)}$ ~ $(I_{n(j)}, I_{n(i)})$-shears over $B_{n(j)}$. Now the sequences 
$\langle I_{n(i)} : i < \omega \rangle$, $\langle B_{n(i)} : i < \omega \rangle$, and the type 
$p \in \ts_{\tau(T)}(B) = \ts_{\tau(T)}(\bigcup_i B_{n(i)} )$ witness that $T$ is $\xc$-unsuperstable.

So there must be one such, call it $m_*$. Now we proceed similarly to the case where we have a weak definition. 
Let $J \supseteq I$ be $\aleph_0$-saturated. Since $M$ is a $\GEM$-model, we may choose a detailed enumeration 
(recalling \ref{d:detailed}) 
\[ p(\bar{x}) = \{ \vp_\alpha(\bar{x}, \bar{\sigma}_\alpha(\bar{a}_{\bar{t}_{\alpha}} )) : \alpha < \alpha_*\} \] 
where each $\bar{a}_{\bar{t}}$ is from the skeleton and each $\bar{\sigma}_\alpha$ is sequence of $\tau(\Phi)$-terms.  
Let $\bar{s}_{m_*}$ be the enumeration of $I_{m_*}$. 
Consider the larger set of formulas 
\begin{equation}
\label{eq:q} q(\bar{x}) = \{ \vp_\alpha(\bar{x}, \bar{\sigma}_\alpha(\bar{a}_{\bar{t}})): \alpha < \alpha_*, 
~ \bar{t} \in {^{\omega >} J}, 
 ~\tpqf(\bar{t}, \bar{s}_{m_*} J) = 
\tpqf(\bar{t}_{\alpha}, \bar{s}_{m_*}, I) \}. 
\end{equation}
Suppose $q(\bar{x})$ were not a partial type.  There would be $\alpha_1, \dots, \alpha_k$ such that 
\[ \{ \vp_\alpha(\bar{x}, \bar{\sigma}_\alpha(\bar{a}_{\bar{t}})): \alpha  \in \{ \alpha_1, \dots, \alpha_k \}, 
\bar{t} \in {^{\omega >} J}, 
 \tpqf(\bar{t}, \bar{s}_{m_*} J) = 
\tpqf(\bar{t}_{\alpha}, \bar{s}_{m_*}, I) \} \] 
is inconsistent.  Assuming the model $M$ is infinite (if not it would already be saturated), without loss of 
generality\footnote{More precisely, there is some $\beta$: let $\vp_\beta$ be the conjunction of the formulas 
$\vp_\alpha$. Then $\vp_\beta$ belongs to the type so is consistent, but if we allow the relevant $\bar{t}$ to vary in $J$, 
we get inconsistency by definition of $\beta$. This is the only point in the proof where we use that $p$ is a 
type, that is, that $p$ is a $\Delta$-type where $\Delta$ is a set of formulas closed under conjuction. However, notice that if 
$p$ is a $\vp$-type, we can always choose $\vp_\beta$ to be (an instance of $\vp$) $\land$ (an instance of $\neg \vp$).}
there is some single $\alpha$ such that 
\[ \{ \vp_\alpha(\bar{x}, \bar{\sigma}_\alpha(\bar{a}_{\bar{t}})):  \bar{t} \in {^{\omega >} J}, 
 \tpqf(\bar{t}, \bar{s}_{m_*} J) = 
\tpqf(\bar{t}_{\alpha}, \bar{s}_{m_*}, I) \} \] 
is contradictory. 
Now the  sequence
\[ \langle \bar{\sigma}_\alpha(\bar{a}_{\bar{t}}))  :  
\bar{t} \in {^{\omega >} J [\bar{s}_{m_*}]} \rangle \] 
is $\mk$-indiscernible (the intended interpretation is that when $\bar{a}_{\bar{t}}$ has the wrong length to input to $\bar{\sigma}_\alpha$, 
the expression evaluates to $\emptyset$) over $\bar{a}_{\bar{s}_{m_*}}$. In other words, it is $\mk$-indiscernible over $B_{m_*}$.   
Let $n$ be such that $\bar{t}_\alpha \subseteq I_n$. 
Then we've shown that the formula $\vp_\alpha(\bar{x}, \bar{\sigma}_\alpha(\bar{a}_{\bar{t}_\alpha}))$ here 
$(I_{m_*}, I_n)$-shears over $B_{m_*}$. This contradicts the choice of $m_*$ from the beginning of the proof. 
We conclude that $q(\bar{x})$ is indeed a partial type, and of course $q(\bar{x}) \supseteq p(\bar{x})$. 

Now, for any other $\bar{s} \in {^{\omega >}J}$ such that $\tpqf(\bar{s}, \emptyset, J) = \tpqf(\bar{s}_{m_*}, \emptyset, I)$, 
let 
\[ q_{\bar{s}}(\bar{x}) \] 
denote the result of replacing $\bar{s}_{m_*}$ by $\bar{s}$ in $(\ref{eq:q})$ above.  This takes place in  
$\GEM(J, \Phi)$, and $q(\bar{x}) = q_{\bar{s}_{m_*}}(\bar{x})$ is a partial type, so each $q_{\bar{s}}(\bar{x})$ must also be a partial type.  

Thus, in some larger elementary extension $N_{\star}$ of $\GEM(J, \Phi)$, we may realize all of these types $q_{\bar{s}}(\bar{x})$. Let 
$\bar{d}_{\bar{s}}$ denote the realization in $N_{\star}$ of $q_{\bar{s}}(\bar{x})$. 
Expand $N_{\star}$ to $N^+_{\star}$ by new functions $F_\ell$, $\ell < \lgn(\bar{x})$, interpreted so that 
for each $\bar{s} \subseteq J$ realizing $\tpqf(\bar{s}_{m_*}, \emptyset, J)$, we have 
\[ \langle F_\ell(  \bar{a}_{        \bar{s}     }  )  : \ell \rangle = 
\bar{d}_{ \bar{s} }. \]   
Finally, let $N^{++}_{\star}$ be the expansion of 
$N^+_{\star}$ to a model with Skolem functions. Applying the Ramsey property with $\GEM(J, \Phi)$, $\ma$, and 
$N^{++}_{\star}$, let $\Psi$ be the template returned. Then $\Psi$ will be nice, proper for $\mk$, and in 
$\GEM(I, \Psi)$ the type $p$ will be realized, as will be many of its copies. 
\end{proof}

\begin{cor} \label{m20ab}
Assume $(T,\Delta)$ is superstable for the countable context $\xc$. Suppose ${\Phi \in \Upsilon_\xc[T]}$ and 
$M = \GEM_{\tau(T)}(I, \Phi)$.  Let $p \in \ts_\Delta(M)$ be any type. Then there is $\Phi \geq \Psi$ such that 
$p$ is realized in $\GEM(I, \Psi)$. 
\end{cor}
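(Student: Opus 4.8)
The plan is to rerun the proof of Claim~\ref{m20aa} essentially verbatim, with $\ts(M)$ replaced by $\ts_\Delta(M)$ throughout and $(T,\Delta)$-superstability (Definition~\ref{m5aa}) used in place of plain $T$-superstability. First I would fix an enumeration $\langle i_n : n<\omega\rangle$ of $I$, set $I_n = \{i_k : k<n\}$ with $\bar s_n = \langle i_0,\dots,i_{n-1}\rangle$, and set $B_n = \GEM_{\tau(T)}(I_n,\Phi)$, so that $\bigcup_n I_n = I$, $\bigcup_n B_n = M$, and $p\in\ts_\Delta(M)$. Then run the same dichotomy on $p$: if for every $m$ there is $n>m$ with $p\rstr B_n$ $(I_m,I_n)$-shearing over $B_m$, then thinning to an increasing sequence $n(i)$ the data $\langle I_{n(i)} : i<\omega\rangle$, $\langle B_{n(i)} : i<\omega\rangle$ together with $p$, now viewed as a $\Delta$-type over $\bigcup_i B_{n(i)} = M$, witness that $(T,\Delta)$ is $\xc$-unsuperstable, contradicting the hypothesis. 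Hence there is $m_*$ such that for no $n>m_*$ does $p\rstr B_n$ $(I_{m_*},I_n)$-shear over $B_{m_*}$.

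For the rest, exactly as in~\ref{m20aa}: fix an $\aleph_0$-saturated $J\supseteq I$ and a detailed enumeration $p(\bar x)=\{\vp_\alpha(\bar x,\bar\sigma_\alpha(\bar a_{\bar t_\alpha})):\alpha<\alpha_*\}$ in $\GEM(J,\Phi)$, and form the enlarged set $q(\bar x)$ obtained by letting each $\bar t$ range over the $\bar t\in{}^{\omega>}J$ with $\tpqf(\bar t,\bar s_{m_*},J)=\tpqf(\bar t_\alpha,\bar s_{m_*},I)$. One shows $q(\bar x)$ is a partial type: a finite inconsistent subset, after collecting the relevant index tuples into a single tuple $\bar t_\beta$ and the relevant terms into a single $\bar\sigma_\beta$, yields one formula $\psi(\bar x,\bar\sigma_\beta(\bar a_{\bar t_\beta}))$ which is implied by $p$ (hence consistent in $M$) but becomes contradictory when $\bar t_\beta$ ranges over realizations in $J$ of $\tpqf(\bar t_\beta,\bar s_{m_*},I)$; since $\langle\bar\sigma_\beta(\bar a_{\bar t}):\bar t\in{}^{\omega>}J[\bar s_{m_*}]\rangle$ is $\mk$-indiscernible over $B_{m_*}$, this is exactly an $(I_{m_*},I_n)$-shearing of $\psi$ over $B_{m_*}$ for $n$ with $\bar t_\beta\subseteq I_n$, contradicting the choice of $m_*$. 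Then, as in~\ref{m20aa}, every substitution variant $q_{\bar s}$ (obtained by replacing $\bar s_{m_*}$ with any $\bar s$ having $\tpqf(\bar s,\emptyset,J)=\tpqf(\bar s_{m_*},\emptyset,I)$) is also a partial type, so one realizes all of them in an elementary extension $N_\star\succeq\GEM(J,\Phi)$, codes the realizations by new functions $F_\ell$, adds Skolem functions, and applies the Ramsey property (Corollary~\ref{d:ramsey-exp}) with $\GEM(J,\Phi)$, $\ma$, and the expanded model to obtain $\Psi\geq\Phi$ in $\Upsilon_\xc[T]$ with $p$ realized already in $\GEM(I,\Psi)$.

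The main obstacle will be the step producing the \emph{single} shearing formula $\psi$ from a finite inconsistent subset of $q$ — this is literally the one place where the proof of~\ref{m20aa} used that $p$ was a full type. When $\Delta$ is closed under conjunction, $\psi$ can be taken to be a conjunction of the relevant $\vp_\alpha$'s, as there. When $\Delta$ is not (e.g. $\Delta=\{\vp,\neg\vp\}$), one instead takes a suitable Boolean combination, such as an instance of $\vp$ conjoined with an instance of $\neg\vp$: this is still a single formula of $\ml(\tau(T))$, and both Definition~\ref{e5} and Definition~\ref{m5aa} only require the shearing to be witnessed by \emph{some} formula of $\tau(T)$ while asking merely that $p$ itself be a $\Delta$-type — not that the shearing formula lie in $\Delta$. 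With that bookkeeping in hand, the argument of~\ref{m20aa} transcribes without further change.
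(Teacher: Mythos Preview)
Your proposal is correct and takes essentially the same approach as the paper: the paper's own proof consists of a single sentence, ``The same proof works at a slight notational cost; simply replace $\ts$ by $\ts_\Delta$, and add $\Delta$ to $\ts_{\tau(T)}(B)$,'' and you have faithfully expanded this out. In particular, your identification of the conjunction step as the only delicate point, and your handling of it when $\Delta$ is not closed under conjunction, exactly matches the footnote in the proof of Claim~\ref{m20aa} (which already notes that for a $\vp$-type one takes $\vp_\beta$ to be a conjunction of an instance of $\vp$ and an instance of $\neg\vp$, and that Definition~\ref{m5aa} only constrains $p$ to be a $\Delta$-type, not the shearing witness).
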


\begin{proof}
The same proof works at a slight notational cost; simply replace $\ts$ by $\ts_\Delta$, and add $\Delta$ to $\ts_{\tau(T)}(B)$.
\end{proof}

\begin{claim} \label{m20a}
Let $\Delta$ be any set of formulas of $T$, in our main case all formulas. 
Assume $(T, \Delta)$ is superstable for the countable context $\xc$. 
Let $\mu$ and $\lambda$ be such that $\mu > |T|$,  $\lambda = \lambda^{<\mu}$.
Then for a dense set of $\Psi \in \Upsilon_\xc[\lambda, T]$ 
the model $\GEM_{\tau(T)}(I, \Psi)$ is $\mu$-saturated for $\Delta$-types. 
\end{claim}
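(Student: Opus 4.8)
The goal is to upgrade Claim \ref{m20aa} (realizing a single type) to a density statement: for a dense set of $\Psi \in \Upsilon_\xc[\lambda, T]$, the model $\GEM_{\tau(T)}(I,\Psi)$ is $\mu$-saturated for $\Delta$-types. Density here is with respect to $\leq_{\Upsilon}$, so I must show: given any $\Phi_0 \in \Upsilon_\xc[\lambda, T]$, there is $\Psi \geq \Phi_0$ in $\Upsilon_\xc[\lambda,T]$ with $\GEM_{\tau(T)}(I,\Psi)$ $\mu$-saturated for $\Delta$-types. The plan is a transfinite iteration of the single-type-realization step, taking unions at limits and using a bookkeeping argument to handle all types over all small subsets, while keeping the signature size bounded by $\lambda$.

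\textbf{Step 1: set up the iteration.} Fix $\Phi_0 \in \Upsilon_\xc[\lambda, T]$. I will build an increasing (under $\leq_\Upsilon$) continuous chain $\langle \Phi_\epsilon : \epsilon < \lambda^+ \rangle$ in $\Upsilon_\xc[\lambda, T]$ — or rather, since I only need $\mu$-saturation and $\mu \leq \lambda$, it suffices to iterate $\lambda$ (or even just enough) times with good bookkeeping; let me iterate $\lambda$ steps, $\langle \Phi_\epsilon : \epsilon \leq \lambda\rangle$, and set $\Psi = \Phi_\lambda$. At successor stages I apply Corollary \ref{m20ab} (the $\Delta$-version of \ref{m20aa}) to realize one designated $\Delta$-type; at limit stages I take the union of the templates. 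The two things to check are: (i) unions of $\leq_\Upsilon$-chains of length $\leq \lambda$ of templates in $\Upsilon_\xc[\lambda, T]$ are again in $\Upsilon_\xc[\lambda, T]$ — this is where I need $\lambda = \lambda^{<\mu} \geq \lambda^{\aleph_0}$ so the union signature still has size $\leq \lambda$, and I need that properness and the conventions in \ref{c:nice} are preserved under such unions (each finitely-generated piece of $\GEM(J,\Phi_\lambda)$ already lives in some $\GEM(J,\Phi_\epsilon)$, and $T_{\Phi_\lambda} = \bigcup_\epsilon T_{\Phi_\epsilon} \supseteq T$ still has Skolem functions for $T$); (ii) Corollary \ref{m20ab} indeed outputs a template in $\Upsilon_\xc[\lambda, T]$ of signature size $\leq \lambda$ — the construction there adds $\lgn(\bar x) \leq |T| < \mu \leq \lambda$ new function symbols plus Skolem functions for a language of size $\leq \lambda$, so the bound is maintained.

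\textbf{Step 2: bookkeeping.} Here the point of $\lambda = \lambda^{<\mu}$ enters decisively. At stage $\epsilon$ I want to have enumerated, with cofinal repetition, all pairs $(\epsilon', q)$ where $q$ is a $\Delta$-type (or partial $\Delta$-type of the appropriate kind) over a subset of $\GEM_{\tau(T)}(I, \Phi_{\epsilon'})$ of size $< \mu$, for $\epsilon' < \lambda$. Since $|\tau(\Phi_{\epsilon'})| \leq \lambda$ and $|I| = \aleph_0$, the model $\GEM_{\tau(T)}(I,\Phi_{\epsilon'})$ has size $\leq \lambda$, so the number of its subsets of size $<\mu$ is $\leq \lambda^{<\mu} = \lambda$, and the number of $\Delta$-types over each is $\leq 2^\lambda$ — that's too big naively, but I only care about types I will actually want to realize, and in the saturation argument a type over a $<\mu$-sized subset is determined by that subset together with a $\leq \lambda$-sized set of formulas, so there are $\leq \lambda$ of them per stage, $\leq \lambda \cdot \lambda = \lambda$ total; standard bookkeeping over $\lambda^+$ stages (or a carefully chosen $\lambda$ stages if one is economical) handles all of them cofinally. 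I will iterate $\lambda^+$ times to be safe and set $\Psi = \Phi_{\lambda^+}$; then re-examine whether $|\tau(\Psi)| \leq \lambda$ survives — it does, since at each of the $\lambda^+$ steps we add $\leq \lambda$ symbols but... no: $\lambda^+ \cdot \lambda = \lambda^+ > \lambda$. So I must be more careful and iterate exactly $\lambda$ times (cofinality $\geq \mu$ would suffice for $\mu$-saturation; and $\lambda \geq \mu$ with $\cf(\lambda)$ possibly small is a genuine subtlety). The fix: iterate $\mu$ times if $\mu$ is regular, or handle singular $\mu$ by the usual device; at each step realize \emph{all} $\leq \lambda$ relevant types at once (Corollary \ref{m20ab} as stated realizes one, but its proof visibly realizes a whole family $\{q_{\bar s}\}$ simultaneously, and iterating the construction to handle $\lambda$-many designated types in one template-step is routine). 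Then $|\tau(\Psi)| \leq \lambda + \mu \cdot \lambda = \lambda$.

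\textbf{Step 3: verify $\mu$-saturation and density.} Given any $\Delta$-type $p$ over a subset $A \subseteq \GEM_{\tau(T)}(I,\Psi)$ with $|A| < \mu$, since $\cf$ of the iteration length is $\geq \mu$, all parameters of $p$ already appear in $\GEM_{\tau(T)}(I,\Phi_\epsilon)$ for some $\epsilon$ below the top; by bookkeeping $p$ was scheduled and realized at some later stage $\epsilon'$, hence is realized in $\GEM_{\tau(T)}(I,\Phi_{\epsilon'}) \subseteq \GEM_{\tau(T)}(I,\Psi)$ (using that realization persists under $\leq_\Upsilon$, per Observation \ref{o:larger}-style reasoning and the order definition \ref{d:order}). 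Since $\Phi_0$ was arbitrary and $\Psi \geq \Phi_0$, the set of such $\Psi$ is dense in $\Upsilon_\xc[\lambda, T]$.

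\textbf{Main obstacle.} The real difficulty is not any single algebraic step — each invocation of Corollary \ref{m20ab} is black-boxed — but the \emph{cardinal bookkeeping}: simultaneously (a) realizing enough types to get $\mu$-saturation, (b) keeping the signature size $\leq \lambda$ through the whole iteration, and (c) handling the case where $\mu \leq \lambda$ but $\cf(\lambda) < \mu$ (so that one cannot naively iterate "$\lambda$ times" and expect cofinality $\geq \mu$). The hypothesis $\lambda = \lambda^{<\mu}$ is exactly what makes (a)+(b) compatible — it bounds both the number of small subsets and the arithmetic of the union — and dealing with (c) requires either assuming/arranging $\mu$ regular in the iteration length or interleaving a reflection argument; I expect this to be the technically fussiest part of writing the proof out in full.
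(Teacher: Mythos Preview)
Your proposal is correct and follows essentially the same approach as the paper: start from an arbitrary $\Phi_0$, build an increasing continuous chain of templates of length $\lambda$, and at each successor stage realize all $\Delta$-types over $<\mu$-sized subsets of the current model simultaneously (by the obvious modification of \ref{m20aa}/\ref{m20ab} adding $\lambda$-many function symbols at once), counting that there are $\leq \lambda^{<\mu}\cdot 2^{<\mu} = \lambda$ such types.

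One remark: your ``main obstacle'' (c), the worry that $\cf(\lambda) < \mu$, is a phantom. The hypothesis $\lambda = \lambda^{<\mu}$ forces $\cf(\lambda) \geq \mu$, since otherwise K\"onig's theorem gives $\lambda^{<\mu} \geq \lambda^{\cf(\lambda)} > \lambda$. So iterating exactly $\lambda$ times is fine: any $<\mu$-sized parameter set already lives in some $\GEM_{\tau(T)}(I,\Phi_\alpha)$ for $\alpha<\lambda$, and the type over it is realized at stage $\alpha+1$. There is no need for a separate reflection argument or a special device for singular $\mu$.
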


\begin{proof} 
Choose $\Phi_0 \in \Upsilon_\xc[\lambda, T]$, recalling this denotes the templates $\Psi$ proper for $\mk_\xc$ 
with $|\tau(\Psi)| \leq \lambda$ and $T_\Psi \supseteq T$.  We need to show that for any such 
$\Phi_0$ there is $\Psi \geq \Phi_0$ as required. 

By induction on $\alpha \leq \lambda$ we will construct an increasing continuous chain of templates $\Phi_\alpha \in \Upsilon_\xc[\lambda, T]$ so that $\Phi_\lambda$ 
will have the desired property.  It suffices to describe the successor stage. 
Let $M_\alpha = \GEM_{\tau(T)}(I, \Phi_\alpha)$. Since $\tau(\Phi_\alpha)$ has $\leq \lambda$ symbols, this will be a model of size $\leq \lambda$. 
Counting types, there will be $\lambda = \lambda^{<\mu}$ choices of a parameter set $A$ of size $<\mu$, and over 
each such $A$, up to $2^{<\mu} \leq \lambda$ types, for a total of $\leq \lambda$ types.  Applying  
Claim \ref{m20a} (either applying that Claim $\lambda$ times in succession, or better, simply modifying that proof by adding $\lambda$-many different 
functions $F$ and realizing the types all at once), we find $\Phi_{\alpha + 1} \geq \Phi_\alpha $ with 
$|\tau(\Phi)_{\alpha+1}| \leq |\tau(\Phi_\alpha)| + \lambda$ so that in $\GEM_{\tau(T)}(I, \Phi_{\alpha+1})$ the types we had 
just counted are all realized.  

By the end of the induction, $M_\lambda = \GEM_{\tau(T)}(I, \Phi_\lambda)$ will be $\mu$-saturated. 
\end{proof}

\noindent A comment on the operation of Claim \ref{m20a}.  
At first it may seem strange that saturated models are built up around a single unchanging $I$, 
but what one should notice is the change and expansion in the template as $\Phi_0$ becomes $\Phi_\lambda$. 
In some sense the induction of \ref{m20a} is simply adding a growing list of precise construction instructions to the `scaffolding' 
of the model (the saturation will be for $\tau(T)$ once the `scaffolding is taken off'). 
The inclusion of both $\mu$ and $\lambda$ in the statement of the claim points out how we 
may increase saturation even further as we allow an increased distance between the size of $\tau(\Phi)$ and 
the ``constant'' size of $T$.  If we hope to build a $\mu$-saturated model for some large $\mu$, 
the statement of Claim \ref{m20a} tells us what kind of $\lambda$ we will need.

\br

Next we consider $\xc$-unsuperstability. 
In this direction, the potential difference between the $I$- or $J$-indexed sequence witnessing shearing and 
the $I$- or $J$-indexed skeleton of the $\GEM$ models in the picture will be noted. 

\begin{disc} Our theorems will continue to be true locally as will be obvious from the proofs (the type ultimately omitted is a progressive 
automorphic image of the type realizing un-superstability), 
but we emphasize the global versions as there is marginally less notation, and state 
the local versions after for later reference. 
\end{disc} 

As a warm-up for Theorem \ref{t:dir2}, we explain how to copy a single instance of shearing into a $\GEM$-model. 
(Claim \ref{c:folding} is illustrated in Figure 1.)

\begin{figure}
\begin{center}
\includegraphics[width=120mm]{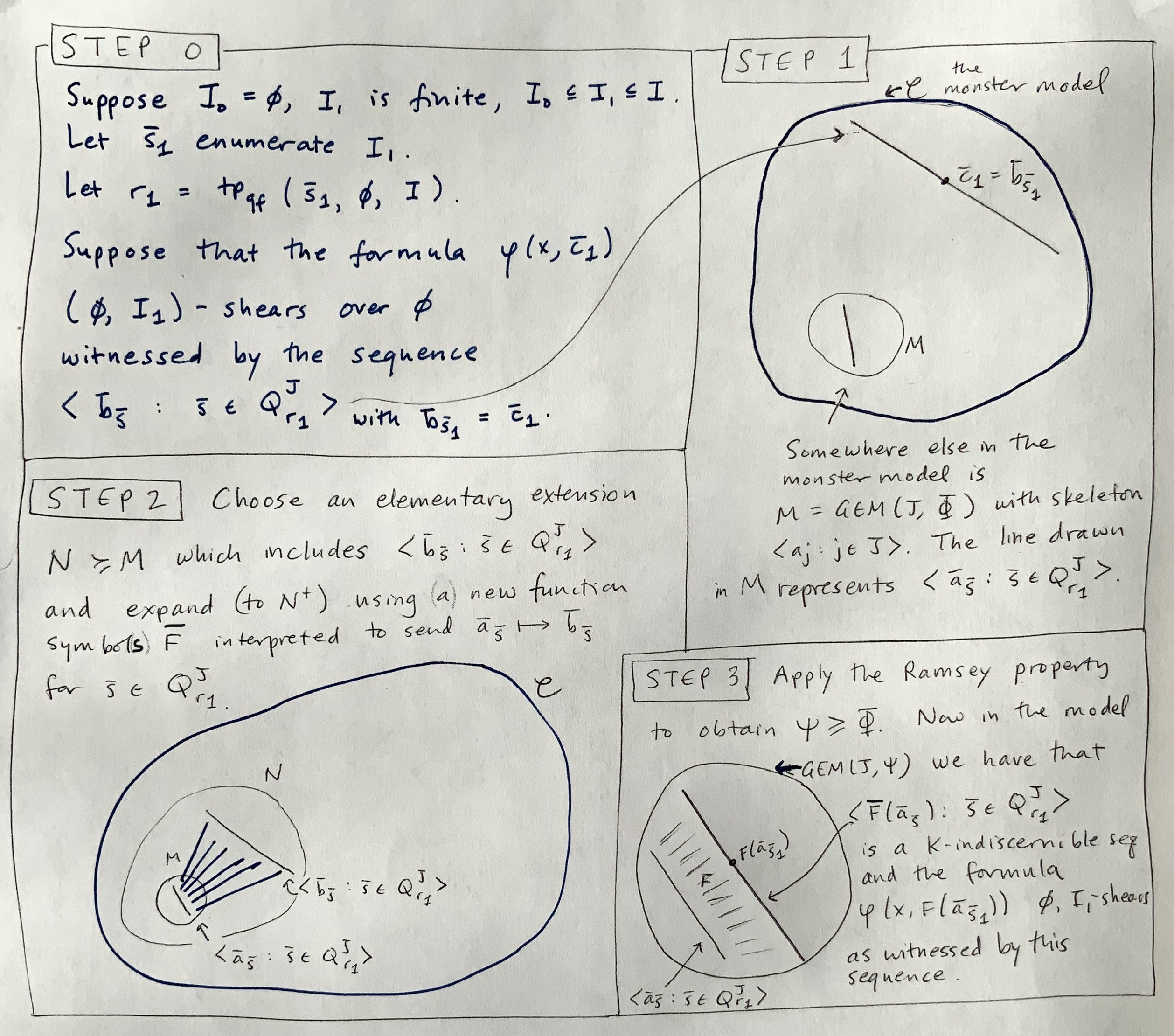}
\end{center}
\caption{Folding an instance of shearing into a $\GEM$-model (\ref{c:folding}).}
\end{figure}

\begin{claim}[Folding an instance of shearing into a $\GEM$-model] \label{c:folding} 
Fix a background theory $T$ and suppose we are given a countable context $\xc = (I, \mk)$, along with:
\begin{enumerate}
\item \emph{(an instance of shearing)} Let $I_0 = \emptyset$ and $I_1 \subseteq I$ be finite. 
Let $J \supseteq I$, $J \in \mk$ be $\aleph_0$-saturated. Let $\bar{s}_1$ enumerate $I_1$. Let 
$\xr_1 = \tpqf(\bar{s}_1, \emptyset, I)$. Suppose the formula $\vp(x,\bar{c}_1)$ shears for $(\emptyset, I_1)$, 
witnessed by the $\mk$-indiscernible sequence $\mb = \langle \bar{b}_{\bar{s}} : \bar{s} \in Q^J_{\xr_1} \rangle$, with 
$\bar{b}_{\bar{s}_1} = \bar{c}_1$.  Let $m = \lgn(\bar{c}_1)$. 

\item \emph{(a $\GEM$-model)} Let $M = \GEM(J, \Phi)$ where $\Phi \in \Upsilon[T]$.  Let 
$\langle \bar{a}_j : j \in J \rangle$ be the skeleton of $M$, and let $k = \lgn(\bar{a}_{\bar{s}_1})$. 
\end{enumerate}
Then there exists $\Psi \geq \Phi$ and a sequence of $k$-place function symbols $\langle \bar{F}_\ell : \ell < m \rangle$ 
of $\tau(\Psi) \setminus \tau(\Phi)$ such that in the model $\GEM(J, \Psi)$ 
which has skeleton $\langle \bar{a}_j : j \in J \rangle$, we have that 
the formula $\vp(x, \bar{F}(\bar{a}_{\bar{s}_1}))$ itself $(\emptyset, I_0)$-shears, witnessed by the sequence 
$\langle \bar{F}(\bar{a}_{\bar{s}}) : \bar{s} \in Q^J_{\xr_1} \rangle$.  Moreover the type of this sequence is 
the same as the type of $\mb$. 
\end{claim}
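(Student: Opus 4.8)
The plan is to copy the shearing witness $\mb$ into the $\GEM$-model by the same Ramsey-property mechanism used in the proof of Claim \ref{needed1} and Claim \ref{m20aa}, namely: encode $\mb$ as an expansion of an elementary extension of $M$ by new function symbols, apply the Ramsey property (Corollary \ref{d:ramsey-exp}) to reflect that expansion into a template $\Psi \geq \Phi$, and then read off the conclusion inside $\GEM(J, \Psi)$.

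First I would set up the expansion. Since $\mb = \langle \bar b_{\bar s} : \bar s \in Q^J_{\xr_1} \rangle$ is $\mk$-indiscernible in $N$ with $M \preceq N$ (after enlarging $N$ if necessary we may assume $M \preceq N$ and $\mb, \bar c_1 \in N$), and since $\Phi \in \Upsilon[T]$ with skeleton $\langle \bar a_j : j \in J\rangle$, pick an elementary extension $N_1 \succeq M$ with $N_1 \supseteq N$. Then expand $N_1$ to $N_1^+$ by adding $m$ new $k$-place function symbols $\langle \bar F_\ell : \ell < m\rangle$ and interpreting them so that for each $\bar s \in Q^J_{\xr_1}$ (recall $\xr_1 = \tpqf(\bar s_1, \emptyset, I)$, so these are the tuples of $J$ of the right quantifier-free type over $\emptyset$), we have $\langle F_\ell^{N_1^+}(\bar a_{\bar s}) : \ell < m\rangle = \bar b_{\bar s}$; this is consistent because the $\bar a_{\bar s}$ for distinct $\bar s \in Q^J_{\xr_1}$ are distinct tuples and we are free to define the functions arbitrarily off this set. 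Since $I_0 = \emptyset$ here there is no need to work over a copy of $I_0$, which simplifies matters relative to \ref{needed1}. Further expand $N_1^+$ by Skolem functions to get $N_1^{++}$.

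Next I would apply Corollary \ref{d:ramsey-exp} with $J$ (which is $\aleph_0$-saturated), $\Phi$, $M = \GEM(J, \Phi)$, and $N^+ := N_1^{++}$, obtaining a template $\Psi \in \Upsilon[T]$ with $\Psi \geq \Phi$, $\tau(\Psi) \supseteq \tau(N_1^{++}) \supseteq \{\bar F_\ell\}$, and such that $\Psi$ reflects the skeleton $\ma$ in the sense of \ref{d:r1}(ii). In $\GEM(J, \Psi)$, with skeleton $\langle \bar a_j : j \in J\rangle$, set $\bar b'_{\bar s} := \bar F(\bar a_{\bar s})$ for $\bar s \in Q^J_{\xr_1}$. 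By the reflection property, any $\tau(T)$-property (with $\tau(\Phi)$-terms) that held uniformly of tuples $\bar a_{\bar s^0}, \dots, \bar a_{\bar s^{n-1}}$ for all tuples realizing a given quantifier-free type in $J$ is recorded by $\Psi$; applying this to the statements asserting the $\mk$-indiscernibility of $\mb$ over $\emptyset$ and to the $1$-consistency-but-contradictoriness of $\{\vp(x, \bar b_{\bar t'}) : \tpqf(\bar t', \emptyset, J) = \xr_1\}$ shows that $\langle \bar b'_{\bar s} : \bar s \in Q^J_{\xr_1}\rangle$ is $\mk$-indiscernible with the same quantifier-free type as $\mb$, and that $\{\vp(x, \bar b'_{\bar t'}) : \tpqf(\bar t', \emptyset, J) = \xr_1\}$ is contradictory. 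Hence, taking $\bar c_1' := \bar b'_{\bar s_1} = \bar F(\bar a_{\bar s_1})$, the formula $\vp(x, \bar F(\bar a_{\bar s_1}))$ itself $(\emptyset, I_1)$-shears in $\GEM(J, \Psi)$, witnessed by $\langle \bar F(\bar a_{\bar s}) : \bar s \in Q^J_{\xr_1}\rangle$. (Note the statement's ``$(\emptyset, I_0)$-shears'' appears to be a typo for $(\emptyset, I_1)$; I would prove the latter, or, if truly $I_0$ is meant, recall $I_0 = \emptyset$ and the sequence is indexed over $Q^J_{\xr_1}$ regardless, so monotonicity \ref{c:monot} covers both readings.)

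The main obstacle is bookkeeping rather than conceptual: ensuring that the type of the resulting sequence $\langle \bar F(\bar a_{\bar s}) : \bar s \in Q^J_{\xr_1}\rangle$ is literally the same as that of $\mb$, not merely that it also witnesses shearing. This requires that the reflection clause of \ref{d:r1}/\ref{d:ramsey-exp} be applied not just to the single formula $\vp$ and the contradictoriness witness, but to every $\tau(T)$-formula in finitely many of the $\bar F(\bar a_{\bar s})$'s — i.e.\ one must observe that for \emph{each} such formula, its truth value is uniform across tuples of the same quantifier-free type in $J$ (because $\mb$ is $\mk$-indiscernible), so $\Psi$ records all of them simultaneously; this is exactly the content of the ``moreover'' in \ref{d:ramsey-exp}. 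A secondary point to check is that the interpretation of the $\bar F_\ell$ in $N_1^+$ is well defined, which uses that $\bar s \mapsto \bar a_{\bar s}$ is injective on $Q^J_{\xr_1}$ (nontriviality of the template, \ref{c:nice}(a)) — this is the only place the hypothesis that $M$ is a genuine $\GEM$-model with a without-repetition skeleton enters.
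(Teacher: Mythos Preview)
Your proposal is correct and follows essentially the same approach as the paper: take an elementary extension of $M$ containing the shearing witness $\mb$, expand by new function symbols $\bar F_\ell$ sending $\bar a_{\bar s}$ to $\bar b_{\bar s}$ (plus Skolem functions), apply the Ramsey property to obtain $\Psi \geq \Phi$, and read off both the shearing and the ``moreover'' clause from the reflection property applied to the $\mk$-indiscernibility of $\mb$. Your additional remarks on well-definedness of the expansion and on the $(\emptyset, I_0)$ vs.\ $(\emptyset, I_1)$ typo are accurate and go slightly beyond what the paper spells out.
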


\begin{proof}
Consider $M \rstr \tau(T)$ and $\mb$ inside the monster model, having a priori nothing to do with each other. 
(We are initially in $\mathfrak{C}_T$, though when we expand before applying the Ramsey property, it is really $\mathfrak{C}_{T_\Phi}$. So the reader should understand that $\mathfrak{C}_T$ denotes ``$\mathfrak{C}_{T_\Phi} \rstr \tau(T)$.'') 
Choose an elementary extension $N$ of $M$ which includes $\langle \bar{b}_{\bar{s}} : \bar{s} \in Q^J_{\xr_1} \rangle$. 
Let $\langle F_\ell : \ell < m \rangle$ be new $k$-place function symbols, where new means not in $\tau(\Phi)$. 
Expand $N$ to $N^+$ by interpreting the functions $F$ so that 
\[ \bar{F}(\bar{a}_{\bar{s}}) = \bar{b}_{\bar{s}} \mbox{ for each $\bar{s} \in Q^J_{\xr_1}$. } \]
Recall our notation: if $\bar{F} = \langle F_i : i < k \rangle$ then $\bar{F}(\bar{a})$ is the sequence 
$\langle F_i(\bar{a}) : i < k \rangle$. Expand also to add Skolem functions. Apply the Ramsey property to $N^+$ to obtain $\Psi \geq \Phi$.  
Now in $\GEM(J, \Psi)$, with skeleton $\langle \bar{a}_j : j \in J \rangle$, we have that 
\[  \{ \bar{F}(\bar{a}_{\bar{s}}) : \bar{s} \in Q^J_{\xr_1} \} \]
is a $\mk$-indiscernible sequence, necessarily contained in $\GEM(J, \Psi)$, and it witnesses the 
$(\emptyset, I_1)$-shearing of the formula $\vp(x,\bar{F}(\bar{a}_{\bar{s}_1})$, as desired. 
The last line of the claim follows by the Ramsey property, \emph{because} $\mb$ was a $\mk$-indiscernible sequence. 
\end{proof}

\begin{theorem} \label{t:dir2}
Suppose $\xc$ is a countable context and assume $T$ is $\xc$-unsuperstable.  
For every $\Phi \in \Upsilon_\xc[T]$ there is $\Phi_* \geq \Phi$ with $|\tau(\Phi_*)| \leq |T| + |\tau_\Phi| + \aleph_0$ such that 
for every $\Psi \geq \Phi_*$ we have  
$\GEM_{\tau(T)}(I, \Psi)$ is not $\aleph_1$-saturated. 
\end{theorem}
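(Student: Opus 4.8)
The plan is to realize the $\xc$-unsuperstability data inside a $\GEM$-model and then argue that the resulting witness type is omitted. First I would unpack the hypothesis: since $T$ is $\xc$-unsuperstable, fix an increasing sequence $\langle I_n : n<\omega\rangle$ of finite subsets of $I$ with $\bigcup_n I_n = I$ and enumerations $\bar{s}_n \tlf \bar{s}_{n+1}$, increasing sets $B_n \subseteq \mathfrak{C}_T$, and a partial type $p$ over $B=\bigcup_n B_n$ such that for each $n$, $p\rstr B_{n+1}$ $(I_n,I_{n+1})$-shears over $B_n$. For each $n$ this gives (Definition \ref{e5}) a model $N_n$, an $\mk$-indiscernible sequence $\mb^n = \langle \bar{b}^n_{\bar{s}} : \bar{s}\in {^{\omega>}(J[I_n])}\rangle$ over $B_n$, a formula $\vp_n(\bar{x},\bar{c}_n)$ with $\bar{c}_n$ from the sequence, and the key contradictory set $\{\vp_n(\bar{x},\bar{b}^n_{\bar{t}'}) : \tpqf(\bar{t}',\bar{s}_n,J)=\tpqf(\bar{t}_n,\bar{s}_n,I)\}$. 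Here $\bar{t}_n$ enumerates (a copy of) $I_{n+1}$ and $\vp_n(\bar{x},\bar{c}_n)$ is a formula used in $p\rstr B_{n+1}$.

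Next I would fold all of these instances of shearing simultaneously into a single $\GEM$-model, generalizing Claim \ref{c:folding} from one instance to an $\omega$-indexed family. Starting from the given $\Phi \in \Upsilon_\xc[T]$, let $J\supseteq I$ be $\aleph_0$-saturated and $M=\GEM(J,\Phi)$ with skeleton $\ma=\langle \bar{a}_j : j\in J\rangle$. Pass to an elementary extension $N$ of $M$ containing $B$ and all the sequences $\mb^n$; expand $N$ to $N^+$ by adding, for each $n$, a block of function symbols $\bar{F}^n$ (of arity $\lgn(\bar{a}_{\bar{s}_{n+1}})$) interpreted so that $\bar{F}^n(\bar{a}_{\bar{s}}) = \bar{b}^n_{\bar{s}}$ for $\bar{s}\in Q^{J[I_n]}_{\xr_n}$, where $\xr_n$ is the relevant quantifier-free type; also name elements of $B_0$ (or handle the $B_n$'s by further functions, since $B_n$ grows) and add Skolem functions. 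Apply the Ramsey property, Corollary \ref{d:ramsey-exp}, to $N^+$, $\ma$, $J$, $\Phi$ to get $\Phi_*\geq\Phi$ with $|\tau(\Phi_*)|\leq |T|+|\tau_\Phi|+\aleph_0$. By the reflection property, in $\GEM(J,\Phi_*)$ — and hence in $\GEM(I,\Phi_*)$, using $\bar{s}_n\subseteq I$ — the sequence $\langle \bar{F}^n(\bar{a}_{\bar{s}}) : \bar{s}\in Q^{J}_{\xr_n}\rangle$ is $\mk$-indiscernible over the image of $B_n$, and the set $\{\vp_n(\bar{x},\bar{F}^n(\bar{a}_{\bar{t}'})) : \tpqf(\bar{t}',\bar{s}_n,J)=\xr_n\}$ is contradictory, so $\vp_n(\bar{x},\bar{F}^n(\bar{a}_{\bar{s}_{n+1}}))$ $(I_n,I_{n+1})$-shears over $B_n$ inside $\GEM(I,\Phi_*)$. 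Crucially, since the order on templates is increasing, these properties persist: for any $\Psi\geq\Phi_*$ in $\Upsilon_\xc[T]$, the same elements and formulas of $\GEM(I,\Psi)$ witness that $p' \rstr B'_{n+1}$ shears over $B'_n$, where $p'$ is the partial type $\{\vp_n(\bar{x},\bar{F}^n(\bar{a}_{\bar{s}_{n+1}})) : n<\omega\}$ (together with whatever finitely-consistent formulas of $p$ restricted to the $B_n$'s transfer) and $B'_n$ is the copy of $B_n$.

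Finally I would show $\GEM_{\tau(T)}(I,\Psi)$ omits $p'$, proving it is not $\aleph_1$-saturated. The point is that $p'$ is a countable partial type over a countable parameter set $B' = \bigcup_n B'_n$, which is finitely satisfiable (each $\vp_n(\bar{x},\bar{c}'_n)$ is consistent, indeed $p\rstr B_{n+1}$ was a partial type and these properties are inherited — shearing of $p\rstr B_{n+1}$ over $B_n$ presupposes $p\rstr B_{n+1}$ is consistent). Suppose for contradiction some $\bar{d}\in \GEM(I,\Psi)$ realizes $p'$. Then $\bar{d}$ lies in $\GEM(\bar{t}_*,\Psi)$ for some finite $\bar{t}_*\subseteq I$, hence $\bar{t}_*\subseteq I_n$ for some $n$ (as $\bigcup I_n = I$), so $\bar{d}\in\GEM(\bar{s}_n,\Psi)$. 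But $\bar{d}\models \vp_n(\bar{x},\bar{F}^n(\bar{a}_{\bar{s}_{n+1}}))$, and $\vp_n(\bar{x},\bar{F}^n(\bar{a}_{\bar{s}_{n+1}}))$ $(I_n,I_{n+1})$-shears over $B'_n$ — in particular the $\mk$-indiscernible sequence $\langle \bar{F}^n(\bar{a}_{\bar{s}}) : \bar{s}\in Q^J_{\xr_n}\rangle$ over $\bar{a}_{\bar{s}_n}$ gives a contradictory set; but any element of $\GEM(\bar{s}_n,\Psi)$ that satisfies one instance must (by $\mk$-indiscernibility of the sequence over $\bar{s}_n$, moving $\bar{s}_{n+1}$ to any conjugate $\bar{t}'$ over $\bar{s}_n$ via an automorphism fixing $\GEM(\bar{s}_n,\Psi)$) satisfy all instances along the sequence, contradicting that the set is contradictory. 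This is the same mechanism as in Claim \ref{m20aa} read in reverse. I expect the main obstacle to be bookkeeping the growing parameter sets $B_n$ correctly under a single application of the Ramsey property — one must ensure each $B_n$ is (eventually) represented by terms over the skeleton indexed inside $I_n$, compatibly across $n$, so that the "$\bar{d}\in\GEM(\bar{s}_n,\Psi)$ but must satisfy the whole contradictory set over $B'_n$" step goes through; organizing the functions $\bar{F}^n$ and the enumerations $\bar{s}_n\tlf\bar{s}_{n+1}$ coherently is where care is needed.
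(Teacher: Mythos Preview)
Your overall architecture is right—fold the shearing witnesses into a $\GEM$-model via new function symbols, then argue the resulting type is omitted—and your final omission argument is essentially the paper's verification step. The gap is in the middle: a \emph{single} application of the Ramsey property will not in general preserve the consistency of $p' = \{\vp_n(\bar{x},\bar{F}^n(\bar{a}_{\bar{s}_{n+1}})) : n<\omega\}$.

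The Ramsey property (Corollary~\ref{d:ramsey-exp}) is one-directional: a formula holds in $\GEM(J,\Phi_*)$ at the tuple $\bar{s}$ only if it held in $N^+$ for \emph{every} realization of $\tpqf(\bar{s},\emptyset,J)$. So to get $\GEM(J,\Phi_*)\models\exists\bar{x}\bigwedge_{n\in K}\vp_n(\bar{x},\bar{F}^n(\bar{a}_{\bar{s}_{n+1}}))$ you would need, in $N^+$, consistency of $\{\vp_n(\bar{x},\bar{b}^n_{\bar{u}_{n+1}}):n\in K\}$ for \emph{all} $(\bar{u}_{n+1})_{n\in K}$ realizing $\tpqf((\bar{s}_{n+1})_{n\in K},\emptyset,J)$. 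But the sequences $\mb^1,\mb^2,\ldots$ are a priori unrelated to one another (each $\mb^n$ is indiscernible only over $B_{n-1}$), so there is no reason $\vp_1(\bar{x},\bar{b}^1_{\bar{u}_2})\land\vp_2(\bar{x},\bar{b}^2_{\bar{u}_3})$ should be consistent once $\bar{u}_2,\bar{u}_3$ are moved away from the specific $\bar{s}_2,\bar{s}_3$. Your justification (``$p\rstr B_{n+1}$ was a partial type and these properties are inherited'') does not address this: what survives Ramsey is only what was uniform across the quantifier-free type, and joint consistency of the $\vp_n$'s was not.

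The paper resolves this by building $\Phi_*$ as the union of an increasing chain $\Phi_0\leq\Phi_1\leq\cdots$, applying Ramsey once per stage. After stage $n$ the new sequence $\langle\bar{F}_{\xr_n}(\bar{a}_{\bar{s}})\rangle$ has the same type as the original $\mb^n$ over what was already built, so there is a partial automorphism $G_n$ of $\mathfrak{C}_T$ sending old to new and fixing the parameters of $q_{n-1}$. One then replaces $p$ by its image under $G_n\circ\cdots\circ G_1$; this keeps $q_n$ a genuine partial type (an automorphic image of a finite piece of $p$) while simultaneously updating the \emph{future} shearing witnesses $\mb^m$ for $m>n$ so that the next stage can proceed. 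Your closing paragraph anticipates that care is needed exactly here, but this is not merely bookkeeping: the inductive realignment via the $G_n$'s is the mechanism that guarantees consistency of the accumulating type, and it is not available in a single-shot construction.
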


\begin{proof}
We will write $I = I_\xc$ and $\mk = \mk_\xc$ for the duration of the proof.\footnote{Informal proof summary: suppose  
$p = \{ \vp_n(x,\bar{c}_n) : n < \omega \}$ 
witnesses $\xc$-unsuperstability. At stage $n$, copy the $n$th instance of shearing into the model
$\GEM(J, \Phi_{n-1})$, over what came before. 
The `copy' in $\GEM(J, \Phi_n)$ of the $\mk$-indiscernible sequence witnessing this instance of shearing 
has the same type as the `original', over what came before. 
Take a partial automorphism of the monster model fixing the parameters so far, sending the original sequence onto the 
copy. Replace $p$ by its image under this map; note the first $n-1$ formulas don't change. Keep going.} 
Let $M = \GEM(I, \Phi)$, and let $J \supseteq I$ be some $\aleph_0$-saturated member of $\mk$. 

Let $\langle I_n : n < \omega \rangle$, $\langle \bar{s}_n : n < \omega \rangle$, $\langle B_n : n < \omega \rangle$, 
and a partial type $p$ over $B$ be given from Definition \ref{m5a} to witness the $\xc$-unsuperstability of $T$.  
To review and fix notation, this means we start with the data of: 
\begin{itemize}
\item[(i)] $I_0 \subseteq \cdots \subseteq I_n \cdots$ are an increasing sequence of finite subsets of $I$, with $\bigcup_n I_n = I$. 
Without loss of generality (see \ref{c:monot}) $I_0 =  \emptyset$.  
\\ For each $n$, $\bar{s}_n$ is an enumeration of $I_n$, and let us assume $\bar{s}_n \tlf \bar{s}_{n+1}$. 
\item[(ii)] $B_0 \subseteq \cdots \subseteq B_n \cdots$ are an increasing sequence of 
nonempty, possibly infinite, sets of parameters in the 
monster model for $T$, with $B := \bigcup_n B_n$. 
For each $n$, there is a formula $\vp_{n}(\bar{x}, \bar{c}_{n})$ witnessing 
that $p \rstr B_{n}$ is a type which $(I_{n-1}, I_{n})$-shears over $B_{n-1}$.
\end{itemize}
For simplicity in the present proof, without loss of generality: 
\begin{itemize}
\item[(iii)]  Since $p$ is allowed to be a partial type, 
we may assume that:
\[ p = \{ \vp_{n}(\bar{x}, \bar{c}_{n}) : 1 \leq n < \omega \}. \]
\item[(iv)] Moreover, since a sequence which is $\mk$-indiscernible over some $C$ remains $\mk$-indiscernible over 
$C^\prime \subseteq C$, 
we may assume\footnote{recall notation: if $\lgn(\bar{c}) = k$, then 
$\{ \bar{c}_\ell : \ell < k \}$ is simply the set of its elements.}:  that $B_0 = \{ \bar{c}_{0,\ell} : \ell < \lgn(\bar{c}_0) \}$
and for each $n \geq 1$, $B_{n} = B_{n-1} \cup \{ \bar{c}_{n,\ell} : \ell < \lgn(\bar{c}_{n}) \}$. 
\item[(v)] Finally, for each $n$, to fix notation, 
say the shearing of $\vp_n$ is witnessed by 
$\mb^{n} = \langle \bar{b}^{n}_{\bar{s}} : \bar{s} \in {^{\omega>}(J[I_{n-1}])} \rangle$ in the 
monster model, which is $\mk$-indiscernible over $B_{n-1}$.  Note in particular that $\bar{b}^n_{\bar{s}_n} = \bar{c}_n$.  
(We could also have asked that $\bar{c}_{n-1} \tlf \bar{c}_n$.)
\end{itemize}
As our notation suggests, without loss of generality the $\aleph_0$-saturated sequence $J \supseteq I$ indexing each $\mb^n$ 
is the same $J$ mentioned in the second line of the proof.  As before, we may consider $M \rstr \tau(T)$ and $B$ as subsets of 
the monster model, and a priori they may have nothing to do with each other.\footnote{Most of the time we will work in $\mathfrak{C}_T$, though when we are expanding before applying the Ramsey property, it is really $\mathfrak{C}_{T_\Phi}$. 
Perhaps it is best to consider that $\mathfrak{C}_T$ denotes ``$\mathfrak{C}_{T_\Phi} \rstr \tau(T)$.'' 
Note also that there there is no connection asserted between 
the various $J$-indexed sequences witnessing dividing, or between these and possible skeletons of $\GEM$ models.
Moreover, the types of the various $J$-indexed sequences $\mb^n$ could certainly be different.}  

By induction on $n < \omega$ we will define $\Phi_n$ and $q_n$ (and auxiliary objects $\xr_n$, $\bar{F}_{\xr_n}$, $G_n$), which 
will be objects of the following kind. 

\br
\noindent \emph{for each} $n \geq 0$, 
\begin{enumerate}[(a)]
\item $\Phi_n \in \Upsilon_\xc[T]$, and $m < n$ implies $\Phi_m \leq \Phi_n$, and $\Phi_0 = \Phi$. 
\item $|\tau(\Phi_n)| \leq |\tau(\Phi_0)| + |T|$
\end{enumerate}
\emph{for each} $n \geq 1$, 
\begin{enumerate}[(b)]
\item[(b)] $q_n$ is a partial type of $\GEM_{\tau(T)}(I, \Phi_n)$,  which $(I_{n-1}, I_n)$-shears over the 
submodel $\GEM_{\tau(T)}(I_{n-1},\Phi_{n-1})$, witnessed by the sequence 
\[ \langle \bar{F}_{\xr_n}(\bar{a}_{\bar{s}}) : 
\bar{s} \mbox{ is from $J$ and realizes } \tpqf(\bar{s}_n, \bar{s}_{n-1}, I) ~ \rangle \]
where $\bar{F}_{\xr_n}$ is a 
sequence of $\ell(\bar{b}^n_{\bar{s}_n})$-many function symbols of $\tau(\Phi_n)$, each of arity 
$\ell(\bar{a}_{\bar{s}_n})$, where here 
$\langle a_j : j \in J \rangle$ denotes the skeleton. 
\item[(c)] each $q_n$ is contained in the image of $p$ under some partial automorphism of $\mathfrak{C}$, and 
$m < n$ implies $q_m \subseteq q_n$.  In slight abuse of notation, 
\[ q_n = G_{n}(G_{n-1}( \cdots G_1( ~~\{ \vp_m(\bar{x}, \bar{c}_m) : m \leq n \}  ~~) \cdots)) \] 
where the $G$'s are from item (f). 
\end{enumerate}
\noindent \emph{for each $n \geq 1$, auxiliary objects used in the construction}:
\begin{enumerate}
\item[(d)] $\xr_n \in \tsqf(J)$. 
\item[(e)] $\bar{F}_{\xr_n}$ are new function symbols of $\tau(\Phi_n)$ with domain $Q^J_{\xr_n}$, see (b). 
\item[(f)] $G_n$ is a partial automorphism of $\mathfrak{C}_T$, see (c).  
The domain of $G_1$ includes each of the shearing sequences  $\mb^i$ for $i<\omega$, \emph{thus} it also 
includes the domain $B$ of $p$. 
The domain of $G_n$ includes the range of $G_{n-1}$.  Finally, for $m<n$, $G_n$ 
leaves $\bar{c}_m$ fixed, in slight abuse of notation $G_n \rstr \bar{c}_m = G_m \rstr \bar{c}_m$. 
\end{enumerate}
Note that it will follow from (b) that $q_n$ shears ``up to stage $n$'' in $\GEM(J, \Phi_n)$, it will follow from 
(c) that $q_n$ extends to a partial type which continues to witness $\xc$-unsuperstability, and as we shall 
verify, it will follow from both that $\bigcup_n q_n$ is a partial type of $\GEM(J, \bigcup_n \Phi_n)$, or indeed of 
$\GEM(I, \bigcup_n \Phi_n)$, which 
witnesses $\xc$-unsuperstability, in a precise sense verified at the end of the proof.  When the skeleton is 
indexed by $I$ instead of $J$, the shearing sequences may not be contained in the model, but the parameters 
for the type are, which will be sufficient.

\br

\noindent\textbf{Stage $n=0$.}  Let $\Phi_0 = \Phi$. 

\br
\noindent\textbf{Stage $n=1$.}
Let $N_1 = \GEM(J, \Phi_0)$ with skeleton $\ma$. 
The formula 
\begin{equation} \label{e72} 
\vp_1(\bar{x}, \bar{c}_1)
\end{equation} 
$(\emptyset, \bar{s}_1)$-shears over $B_0$. The shearing is witnessed by $\mb^1$, and in particular, 
$\bar{b}^1_{\bar{s}_1} = \bar{c}_1$. 
Let $\xr_1 = \tpqf(\bar{s}_1, \emptyset, I)$. 
Let $\bar{F}_{\xr_1}$ be a sequence of $\ell(\bar{b}^1_{\bar{s}_1})$-many new $\ell(\bar{a}_{\bar{s}_1})$-ary function symbols. 
Considering $N_1$ as an elementary submodel of $\mathfrak{C}_{T_\Phi}$, interpret $\bar{F}_{\xr_1}$ so that 
\[ \bar{F}_{\xr_1}(\bar{a}_{\bar{s}}) = \bar{b}^1_{\bar{s}} ~ \mbox{ for each $\bar{s} \in Q^J_{\xr_1}$.} \]
(So in particular $\bar{F}_{\xr_1}(\bar{a}_{\bar{s}_1}) = \bar{b}^1_{\bar{s}_1} = \bar{c}_1$.)
Let $N_{1,1}$ be an elementary extension of $N_1$ in this larger language, which is closed under the functions 
$\bar{F}_{\xr_1}$  
and which is then also expanded to have Skolem functions. 

Apply the Ramsey property to $N_{1,1}$, $\ma$, $\Phi_0$.  
We obtain $\Phi_1 \geq \Phi_0$, and the functions from $\bar{F}_{\xr_1}$ are in $\tau(\Phi_1)$.  
Because of the Skolem functions, $\Phi_1$ is nice. 
In $\GEM(J, \Phi_1)$, the sequence 
\begin{equation}
\label{f:seq}
 \langle F_{\xr_1}(\bar{a}_{\bar{s}}) : \bar{s} \in Q^J_{\xr_1} \rangle 
\end{equation}
need no longer be identical to $\mb^1$, but because of the reflection clause in the Ramsey property 
and the fact that $\mb^1$ is $\mk$-indiscernible, this new sequence 
is also $\mk$-indiscernible and will have the same type as $\mb^1$. Let $G_1$ be a partial automorphism of $\mathfrak{C}_{T}$ 
whose domain includes $B$, which sends 
\[ \langle \bar{b}^1_{\bar{s}} : \bar{s} \in Q^J_{\xr_1} \rangle \] 
to the sequence from (\ref{f:seq}) in the natural way, i.e. 
\[ \bar{b}^1_{\bar{s}} \mapsto 
\bar{F}^{\GEM(J, \Phi_1)}_{\xr_1}(\bar{a}_{\bar{s}}) \mbox{ \hspace{5mm} for $\bar{s} \in Q^J_{\xr_1}$.} \]
In slight abuse of notation, write $G_1(p)$ for the image of the partial 
type $p$ after applying $G_1$. Then  
$G_1(p)$ is a type of $\ts(G_1(B))$ such that for
every $n > 0$,  we have that 
$G_1(p) \rstr G_1(B_n)$ is a type which $(I_{n-1}, I_n)$-shears 
over $G_1(B_{n-1})$, and moreover for $n=1$ this shearing is witnessed by the sequence (\ref{f:seq}) and the formula 
\[ \vp_1(\bar{x}, G_1(\bar{b}^1_{\bar{s}_1})) ~ \equiv ~ \vp_1(\bar{x}, \bar{F}_{\xr_1}(\bar{a}_{\bar{s}_1})) 
~\equiv~ \vp_1(\bar{x}, G_1(\bar{c}_1)). \] 
We define
\[ q_1 = \{ \vp_1(\bar{x}, G_1(\bar{c}_1)) \} \]
[i.e. the $G_1$-image of (\ref{e72})] 
so as $\bar{s}_1$ is a sequence of elements of $I$, by the equivalence just given, 
$q_1$ is a partial type of $\GEM_{\tau(T)}(I, \Phi_1)$ which $(\emptyset, I_1)$-shears 
over $\emptyset$, and is contained in an automorphic image of $p$.  This completes the base stage. 

\noindent\textbf{Stage $n=k+1$.}  
As the stage begins, we have a template $\Phi_{n-1}$,  partial automorphisms $G_1, \dots, G_{n-1}$, and a partial type\footnote{i.e. 
in the slight abuse of notation from above, $q_k$ is $G_1( \{ \vp_1 \} ) \cup  G_2(G_1( \{ \vp_2 \})) \cup \cdots \cup G_{n-1}(\cdots G_1( \{ \vp_{n-1} \} ) ~) $.  Or recalling item $(f)$ of the list at the beginning of the proof, $q_k$ is just $G_{n-1}(G_{n-2}(\cdots ( \{ \vp_1, \dots, \vp_{n-1} \} ) \cdots))$. } 
$q_{n-1}$ of $\GEM_{\tau(T)}(I, \Phi_{n-1})$,  
such that 
\begin{align*} q_{n-1} = &  \{~\vp_1(\bar{x}, G_1(\bar{c}_1)) ~\} \cup \{ ~\vp_2(\bar{x}, G_2(G_1(( \bar{c}_2)) ) ~\} \cup \cdots \\ 
&  \hspace{20mm} \cdots 
\cup \{ ~ \vp_{n-1}(\bar{x}, G_{n-1}(G_{n-2}(\cdots G_1 (\bar{c}_{n-1} ))) ~) ~ \}.
\end{align*}
By inductive hypothesis, 
\begin{itemize}
\item for each $1 \leq j < n$,  $q_j$ is a partial type with 
parameters from the submodel $\GEM_{\tau(T)}(I_j, \Phi_j)$; 
\item  $q_j$ is a partial type which $(I_{j-1}, I_{j})$-shears over $\GEM_{\tau(T)}(I_{j-1}, \Phi_{j-1})$. 
\item if we consider $q_j$ as a partial type in $\GEM_{\tau(T)}(J, \Phi_{j})$, 
there are a sequence of function symbols $\bar{F}_{\xr_j}$ of $\tau(\Phi_{j})$ 
such that  in this model, 
\[   \langle \bar{F}_{\xr_j}(\bar{a}_{\bar{s}} )   ~: ~~ {\bar{s}_{j-1}} ~^\smallfrown \bar{s}_{} \in Q^J_{\xr_j} \rangle \]
witnesses this shearing. 
\end{itemize}

To simplify notation, locally in this stage, write $G$ to abbreviate the composition $G_{n-1}\circ G_{n-2} \circ \cdots \circ G_1$. 
Let $M_n = \GEM(I, \Phi_{n-1})$.  
Let $N_n = \GEM(J, \Phi_{n-1})$. As $G$ is a partial automorphism, 
$\vp_n(\bar{x}, G(\bar{c}_n))$ is a formula which is consistent with $q_k$ and which 
$(I_{n-1}, I_n)$-shears over $G(B_{n-1})$,\footnote{Note that the set $G(B_{n-1})$ includes the domain of $q_k$. 
However, the set $G(B_{n-1})$ certainly need not include the algebraic closure of the domain of $q_k$, such as $\GEM(I_k, \Phi_k)$. 
Indiscernibility over this possibly larger or possibly different set will be guaranteed only after we let the Ramsey property 
make a better choice of $\mk$-indiscernible sequence for us.} as witnessed by $G(\mb^n)$.  Let 
\[  \xr_{n} = \tpqf(\bar{s}_{n}, \emptyset, I). \]
Let $\bar{F}_{\xr_n}$ be a sequence of $\ell(\bar{b}^n_{\bar{s}_n})$-many new $\ell(\bar{a}_{\bar{s}_n})$-ary function symbol(s).
Considering $N_n$ as an elementary submodel of $\mathfrak{C}_{T_\Phi}$, and recalling that 
$\bar{s}_{n-1}$ is the distinguished enumeration of $I_{n-1}$ (and an initial segment of $\bar{s}_n$), interpret them as follows. 
First, we consider elements coming from the skeleton of the form $\bar{a}_{\bar{s}}$ where $\bar{s} \in Q^J_\xr$ and 
$\bar{s}_{n-1} \tlf \bar{s}$. In this case, interpret so that 
\[ \bar{F}_{\xr_n} ( \bar{a}_{\bar{s}} ) = G(\bar{b}^n_{\bar{s} } ) 
~ \mbox{         for each $\bar{s} \in Q^J_{\xr_n}$ such that $\bar{s}_{n-1} \tlf \bar{s}$.            } \]
Next, we consider elements coming from the skeleton of the form $\bar{a}_{\bar{t}}$ where $\bar{t} \in Q^J_{\xr_n}$ 
and $\bar{t} \rstr \ell(\bar{s}_{n-1}) = \bar{s}^\prime$ for some $\bar{s}^\prime \in Q^J_{\xr_{n-1}}$ possibly different from $\bar{s}_{n-1}$. 
For each such $\bar{s}^\prime$, fix some partial automorphism of the monster model $H_{\bar{s}_{n-1} \mapsto \bar{s}^\prime}$ whose 
domain includes $G(B) \cup N_n \cup G(\mb^n)$ and which extends the automorphism of $N_n$ induced by sending $\bar{s}_{n-1}$ to 
$\bar{s}^\prime$ in the index model. Then, for this fixed $\bar{s}^\prime$, interpret $F_{\xr_n}$ so that 
\[ \bar{F}_{\xr_n}(\bar{a}_{{\bar{s}}}) = H_{\bar{s}_{n-1} \mapsto \bar{s}^\prime} (G(\bar{b}^n_{{\bar{s}}} ))  
~ \mbox{         for each $\bar{s} \in Q^J_{\xr_n}$ with $\bar{s}^\prime \tlf \bar{s}$.            } \]
Note that the reason to do the parallel expansion for all $\bar{s}^\prime \in Q^J_{\xr_n}$ is so that the Ramsey property will record the 
type of the $\mk$-indiscernible sequence correctly, over each $\bar{a}_{\bar{s}^\prime}$. 
Let $N_{n,1}$ be an elementary extension of $N_n$ in this larger language, which is closed under $F_{\xr_n}$ 
and which is then also expanded to have Skolem functions. 
Apply the Ramsey property to $N_{n,1}$, $\ma$, $\Phi_{n-1}$.  
We obtain $\Phi_n \geq \Phi_{n-1}$. Note that $F_{\xr_n} \in \tau(\Phi_n)$.  
Again because of the Skolem functions, we are assured $\Phi_n$ is nice. 

Just as in the base case, in the model $\GEM(J, \Phi_n)$, the sequence 
\begin{equation}
\label{fn:seq}
 \langle F_{\xr_n}(\bar{a}_{   \bar{s}       } ) : \bar{s} \in Q^J_{\xr_n}, ~\bar{s}_{n-1} \tlf \bar{s} \rangle 
\end{equation}
need no longer be identical to the sequence 
\begin{equation} 
\label{nx:seq} 
\langle G(\bar{b}^n_{ \bar{s} }) ~ :~ \tpqf(  \bar{s}, \bar{s}_{n-1}, J) = 
\tpqf(\bar{s}_{n}, \bar{s}_{n-1}, I) \rangle 
\end{equation} 
but because of the reflection clause in the Ramsey property 
and the fact that $(\ref{nx:seq})$ is $\mk$-indiscernible, this new sequence $(\ref{fn:seq})$ 
is also $\mk$-indiscernible and will have the same type as $(\ref{nx:seq})$. 
[We really use the $J[I_{n-1}]$ in the definition of $\mb_n$ from Definition \ref{e5} here: 
the $\mk$-indiscernible sequence we use is indiscernible over $\bar{a}_{\bar{s}_{n-1}}$, i.e. ``over $I_{n-1}$''.]
Let $G_n$ be a partial automorphism of $\mathfrak{C}_{T}$ 
which is the identity on the domain of $q_{n-1}$, 
whose domain includes $B$, and which sends $(\ref{nx:seq})$ to $(\ref{fn:seq})$ in the natural way: 
\[ G( \bar{b}^n_{\bar{s}} )  \mapsto F_{\xr_n}(\bar{a}_{\bar{s}}).\] 
Then $q_n = G( p_{n-1} \cup \{ \vp_n \} )$ is a partial type of $\GEM(I, \Phi_n)$, indeed of $\GEM(I_n, \Phi_n)$, which 
satisfies the inductive hypothesis.  

This completes the inductive step, and so the induction. 

\br
\noindent \textbf{Verification}.   Let $\Psi \geq \Psi_* = \bigcup_n \Phi_n$ and let $q = \bigcup_n q_n$. 
Let $M = \GEM(I, \Psi)$ and let $N = \GEM(J, \Psi)$. 
Then $q$ is a partial type of $M \rstr \tau(T)$.  Let us show it is not realized in $M$. 
Assume for a contradiction that it were realized, say by $\bar{d}$.  Then for some $k<\omega$, 
$\bar{d} \subseteq \GEM(I_k, \Phi_k)$. 
We know that in $\GEM(J, \Phi_{k+1})$ there is a formula of $q$ which $(I_{k}, I_{k+1})$-shears 
over $\GEM(I_k, \Phi_k)$. Since the sequence $\bar{d}$ cannot realize the type in this larger model $N$, a fortiori 
it cannot realize the type in the smaller model $M$. 
\end{proof}

\begin{rmk}
The proof of Theorem $\ref{t:dir2}$ builds 
a type $p$ which does not have a weak $\bar{t}_*$-definition for any 
finite $\bar{t}_*$ in $I$, and moreover cannot have one in any $\GEM(I, \Psi)$ for $\Psi \geq \Phi_*$.  The failure of 
$\aleph_1$-saturation in $\GEM_{\tau(T)}I, \Psi)$ for any $\Psi \geq \Phi_*$ will always be due to this $p$ $($of course 
other types may be omitted as well$)$.
\end{rmk}

\begin{cor}[Local unsuperstability] \label{t:dir2a} 
Let $\Delta$ be a set of formulas of $T$. 
Suppose $\xc$ is a countable context, and assume $(T, \Delta)$ is $\xc$-unsuperstable.  
For every $\Phi \in \Upsilon_\xc[T]$ there is $\Phi_* \geq \Phi$ with $|\tau(\Phi_*)| \leq |T| + |\tau_\Phi| + \aleph_0$ such that 
for every $\Psi \geq \Phi_*$ we have  
$\GEM_{\tau(T)}(I, \Psi)$ is not $\aleph_1$-saturated, in particular, it will omit a $\Delta$-type over a countable set.  
\end{cor}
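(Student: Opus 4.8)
The plan is to re-run the proof of Theorem~\ref{t:dir2} essentially verbatim, observing that Definition~\ref{m5aa} supplies exactly the same combinatorial data as Definition~\ref{m5a}, the only difference being that the witnessing partial type $p$ over $B=\bigcup_n B_n$ may now be taken to be a $\Delta$-type. So I would begin as there: fix $\Phi\in\Upsilon_\xc[T]$, set $\Phi_0=\Phi$, take an $\aleph_0$-saturated $J\supseteq I$ from $\mk$, and unpack the data $\langle I_n\rangle$, $\langle \bar{s}_n\rangle$, $\langle B_n\rangle$ and $p=\{\vp_n(\bar{x},\bar{c}_n):1\le n<\omega\}$, where now each formula $\vp_n$ (equivalently $\vp_n$ or its negation) lies in $\Delta$, since $\vp_n(\bar{x},\bar{c}_n)\in p\rstr B_n\subseteq p$ and $p$ is a $\Delta$-type.

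Next I would carry out the same induction, producing $\Phi_n\in\Upsilon_\xc[T]$ with $\Phi_{n-1}\le\Phi_n$ and $|\tau(\Phi_n)|\le|\tau(\Phi_0)|+|T|$, partial automorphisms $G_n$ of $\mathfrak{C}_T$, and partial types $q_n$ of $\GEM_{\tau(T)}(I_n,\Phi_n)$ exactly as in Theorem~\ref{t:dir2}, using the Ramsey property (Corollary~\ref{d:ramsey-exp}) and the folding idea of Claim~\ref{c:folding} to copy each successive instance of shearing over what came before. The one point to emphasize for the local version is that the formula deposited into $q_n$ at stage $n$ is $\vp_n(\bar{x},G(\bar{c}_n))$ for a composition $G$ of partial automorphisms of $\mathfrak{C}_T$; since automorphisms fix formulas and only move parameter tuples, $q_n$ consists of instances of the same formulas $\vp_n\in\Delta$. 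Hence each $q_n$, and so $q:=\bigcup_n q_n$, is a partial $\Delta$-type. Moreover the parameter set of $q$ is a countable union of finite tuples (the images of the $\bar{c}_n$'s), hence countable, and it lies inside $\bigcup_n\GEM_{\tau(T)}(I_n,\Phi_n)\subseteq\GEM_{\tau(T)}(I,\Psi_*)$ where $\Psi_*=\bigcup_n\Phi_n$, which has $|\tau(\Psi_*)|\le|\tau_\Phi|+|T|+\aleph_0$.

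Finally, for any $\Psi\ge\Psi_*$, the Verification step of Theorem~\ref{t:dir2} applies unchanged: $q$ is a partial type of $M\rstr\tau(T)$ with $M=\GEM(I,\Psi)$ which is not realized in $M$, since any realization $\bar{d}$ would lie in $\GEM(I_k,\Phi_k)$ for some $k$, while in $\GEM(J,\Phi_{k+1})$ a formula of $q$ already $(I_k,I_{k+1})$-shears over $\GEM(I_k,\Phi_k)$, contradicting realization even in the larger model. So $\GEM_{\tau(T)}(I,\Psi)$ omits the $\Delta$-type $q$ over a countable set, and in particular is not $\aleph_1$-saturated; taking $\Phi_*=\Psi_*$ gives the claim.

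I expect no genuine obstacle here beyond bookkeeping: the entire content is already present in the proof of Theorem~\ref{t:dir2}, and the only additional verification is the transparent observation that applying partial automorphisms and $\tau(\Phi)$-term substitutions to a $\Delta$-type keeps it a $\Delta$-type over a countable parameter set. The only small point of care is to make sure the convention on $\Delta$ in force (closed under negation, as used when passing from ``$p$ is a $\Delta$-type'' to ``$\vp_n\in\Delta$'') is the intended one, and to note that the cardinality bound $|\tau(\Phi_*)|\le|T|+|\tau_\Phi|+\aleph_0$ survives the $\omega$-length induction.
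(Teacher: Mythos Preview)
Your proposal is correct and matches the paper's approach exactly: the paper states this as a corollary without separate proof, having already remarked (just before Theorem~\ref{t:dir2}) that the local versions ``will continue to be true locally as will be obvious from the proofs (the type ultimately omitted is a progressive automorphic image of the type realizing un-superstability).'' Your write-up simply makes explicit the bookkeeping the paper leaves implicit, and your observation that the automorphisms $G_n$ move only parameters (so $q$ remains a $\Delta$-type over a countable set) is precisely the point.
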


\begin{disc}
\emph{In \ref{t:dir2} and \ref{t:dir2a} we make no assumptions on the size of the language.  We can require $|\tau(\Phi_*)| \leq |\tau(\Phi)| + |T|$, with 
no requirement on 
$\tau(\Psi)$.}
\end{disc}

\begin{claim} 
\label{needed3}
There exists a countable context $\xc$ such that if $T$ is any theory which is supersimple in the usual sense, 
then $T$ is $\xc$-superstable, and if $T^\prime$ is any theory which is not supersimple in the usual sense, 
then $T$ is $\xc$-unsuperstable. 
\end{claim}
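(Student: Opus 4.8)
The plan is to take $\xc_0 = (I_0,\mk_0)$ with $\mk_0$ the class of all linear orders and $I_0 = (\mathbb{Q},<)$ (any countable dense linear order will do). First I would verify $\xc_0$ is a legitimate countable context. The class of linear orders is universal, has no function symbols, every linear order embeds in an $\aleph_0$-saturated one, and it is Ramsey (this is essentially Ramsey's theorem — the original Ehrenfeucht--Mostowski setting), so \ref{d:imc} holds. Since $\mathbb{Q}$ is infinite and $\operatorname{dcl}(\bar t)=\bar t$ for every finite $\bar t$ in a dense linear order, $I_0$ is nontrivial. (Note $(\omega,<)$ would \emph{not} work here: there $\operatorname{dcl}(\{0\})$ is all of $\omega$.) Finally, in any $\aleph_0$-saturated $J\supseteq I_0$ — which is a dense order without endpoints — a point $s\in J$ is pinned down by its quantifier-free type over a finite $\bar t$ exactly when $s\in\bar t$, and otherwise lies in an infinite interval or ray cut out by $\bar t$ and so has infinitely many conjugates; hence $I_0$ is reasonable and non-1-trivial. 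So $\xc_0$ is a countable context.

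The implication "$T'$ not supersimple $\Rightarrow$ $T'$ is $\xc_0$-unsuperstable" is exactly \ref{needed2} (taking $\Delta$ the set of all formulas), since \ref{needed2} applies to every countable context. For the converse, "$T$ supersimple $\Rightarrow$ $T$ is $\xc_0$-superstable", I would argue contrapositively. Given $\xc_0$-unsuperstability data $\langle I_n\rangle,\langle B_n\rangle,p$ as in \ref{m5a}, monotonicity \ref{c:monot} lets me replace each base $I_n$ by $\emptyset$, so for each $n$ there is a formula $\vp_n(\bar x,\bar c_n)$ occurring in $p\rstr B_{n+1}$ that $(\emptyset,I_{n+1})$-shears over $B_n$. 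The reduction is then to the key lemma
\[ (\star)\qquad \text{for }\xc_0\text{, if }\vp(\bar x,\bar c)\ (\emptyset,I_1)\text{-shears over }A\text{ then }\vp(\bar x,\bar c)\text{ divides over }A\text{ in the usual sense.} \]
Granting $(\star)$: each $\vp_n(\bar x,\bar c_n)$ divides over $B_n$; since $p$ is consistent, fix any $\bar d^{*}\models p$ and put $q=\tp(\bar d^{*}/\bigcup_nB_n)$, so $q\rstr B_{n+1}$ divides over $B_n$ for every $n$. Thus $\langle B_n\rangle$ together with $q$ is an infinite dividing chain, so $T$ is not supersimple (this covers the non-simple case too, by convention). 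Hence supersimple theories are $\xc_0$-superstable, and the claim follows.

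It remains to prove $(\star)$. Unwinding \ref{e5} with $I_0=\emptyset$: there is an $\aleph_0$-saturated dense order $J\supseteq I_0$, an $\mk_0$-indiscernible-over-$A$ sequence $\mb=\langle\bar b_{\bar s}:\bar s\in{^{\omega>}J}\rangle$, and an increasing $m$-tuple $\bar t$ ($m=|I_1|$) with $\bar c=\bar b_{\bar t}$, such that $\Sigma:=\{\vp(\bar x,\bar b_{\bar t'}):\bar t'\in\inc_m(J)\}$ is inconsistent. When $m=1$ this is immediate: quantifier-free types of tuples of singletons in a linear order are just order patterns, so the length-one part $\langle\bar b_{\langle s\rangle}:s\in J\rangle$ is an ordinary $A$-indiscernible sequence containing $\bar c$; since $\Sigma$ is inconsistent it is $k$-inconsistent for some $k$, so $\vp(\bar x,\bar c)$ divides over $A$.

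The main obstacle is the case $m>1$: the inconsistent finite subconfiguration of $\Sigma$ now sits on finitely many, possibly overlapping, increasing $m$-tuples, and the job is to replay it along a genuine one-dimensional $A$-indiscernible sequence in $\tp(\bar c/A)$. I would attack this by a Ramsey/extraction argument on the index structure: using the freedom from \ref{c:monot}, the saturation and density of $J$, and passing to a sufficiently long blow-up, one seeks a subsequence $\langle\bar b_{\bar\tau_i}:i<\omega\rangle$ whose tuples $\bar\tau_i$ form a self-similar (hence ordinary-indiscernible-inducing) pattern while still containing an inconsistent finite subconfiguration. Alternatively, exploiting simplicity directly: fixing all but one of the "new" coordinates and letting the last range over an $A$-indiscernible sequence either already produces inconsistency — hence dividing — or lets one peel off a coordinate while strictly dropping a local $D$-rank, so by finiteness of that rank in a supersimple theory the descent terminates at $m=1$. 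Converting a higher-arity shear into an ordinary dividing witness is the delicate point, and is where I expect the real work to lie.
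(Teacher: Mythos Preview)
Your chosen context does not work: the key lemma $(\star)$ is false already for $m=2$, and in fact $\trg$ --- which is supersimple with trivial dividing --- is $\xc_0$-unsuperstable. For a direct counterexample to $(\star)$, let $\langle a_t : t \in J\rangle$ be an infinite anticlique in the monster model of $\trg$ (hence order-indiscernible), and for $\bar t = \langle t_1, t_2\rangle \in \inc_2(J)$ set $\bar b_{\bar t} = \langle a_{t_1}, a_{t_2}\rangle$, extended concatenatively to all of ${^{\omega>}J}$; this is $\mk_0$-indiscernible over $\emptyset$. The formula $\vp(x; y_1, y_2) = R(x, y_1) \land \neg R(x, y_2)$ is consistent at each $\bar b_{\bar t}$, but $\{\vp(x, \bar b_{\bar t'}) : \bar t' \in \inc_2(J)\}$ is $2$-inconsistent: for $t_0 < t_1 < t_2$ the instances at $\langle t_0, t_1\rangle$ and $\langle t_1, t_2\rangle$ force $\neg R(x, a_{t_1}) \land R(x, a_{t_1})$. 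So $\vp(x, a_{t_1}, a_{t_2})$ $(\emptyset, \{t_1, t_2\})$-shears over $\emptyset$ yet does not divide over $\emptyset$. Iterating with fresh parameters (consistent in $\trg$ since the parameter sets are disjoint) witnesses $\xc_0$-unsuperstability for $\trg$; this is exactly the content of the Appendix, where the example following Theorem~\ref{t:circle} shows the linear-order context has property $\ocirc$, whence $\trg$ is $\xc_0$-unsuperstable by Claim~\ref{claim15a}.

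The obstacle you flagged in the $m>1$ case is thus fatal rather than merely delicate: neither a Ramsey extraction nor a rank descent can rescue $(\star)$, because its conclusion is false for this context. The paper instead takes the context of Theorem~\ref{n5}(b): $\mk_\xc$ the class of normal trees with $\omega$ levels (Definition~\ref{n5def}) and $I_\xc = {^{\omega>}\{0\}}$, a single branch. It makes no attempt to reduce shearing to dividing directly; rather it invokes Fact~\ref{n14} (quoted from \cite{MiSh:1124}): for supersimple $T$ and this particular $I$, above any $\Phi$ one can find $\Psi$ making $\GEM_{\tau(T)}(I,\Psi)$ arbitrarily saturated. Were $T$ $\xc$-unsuperstable, Theorem~\ref{t:dir2} would produce $\Phi_*$ above which $\aleph_1$-saturation always fails --- a contradiction. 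The paper's proof of Claim~\ref{needed3} simply defers to Theorem~\ref{n5}.
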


\begin{proof}
The existence of such a $\xc$ 
is given by Theorem \ref{n5} below. 
\end{proof}

\vspace{2mm}

\subsection*{Discussion and examples}
Several examples may shed light on the interaction between ambient shearing and its appearance in $\GEM$-models. 
We continue under Hypothesis \ref{m2x}. 

From the proofs of this section, we see: 

\begin{concl} \label{m-types}
In determining whether a theory $T$ is $\xc$-superstable, for a countable context $\xc$, it suffices to consider $m$-types for 
some fixed $m<\omega$, e.g. $m=1$.
\end{concl}

\begin{proof}
We have established an equivalence in terms of saturation, and for saturation this is true. 
\end{proof}

\begin{expl}
It may be the case that for every nonalgebraic formula $\vp(x,c)$ of some $\GEM_{\tau(T)}(I, \Phi)$, if we write 
$\vp$ as $\vp(x,\bar{\sigma}(\bar{a}_{\bar{t}})) $
for some sequence $\bar{\sigma}$ of $\tau(\Phi)$-terms and $\bar{a}_{\bar{t}}$ of the appropriate length, then 
\[ \{  \vp(x,\bar{\sigma}(  \bar{a}_{\bar{s}} )  )  : \tpqf(\bar{s}, \emptyset, J) = \tpqf(\bar{t}, \emptyset, I) \} \]
is consistent in $\GEM(J, \Phi)$ for every $J \supseteq I$, \emph{even though} some of these formulas divide, 
thus shear. 
\end{expl}

\begin{proof}
Let $T$ be the theory of an equivalence relation with infinitely many infinite classes. By quantifier elimination it 
suffices to consider $\vp(x,c) = E(x,c)$. Suppose we have set up the 
$\GEM$-model $M$ so that the skeleton $\ma$ is an infinite sequence within a single class, with Skolem functions $\langle F_n : n < \omega \rangle$
interpreted so that $F^M_n$ copies $\ma$ over to the $n$-th class. Then $\vp$ clearly divides (so a fortiori shears), and even does so along an 
indiscernible sequence in $\GEM_{\tau(T)}(I, \Phi)$. Still, the set of formulas in the statement will remain consistent since, in $M$, no two 
$s,t$ which share a quantifier-free type satisfy $\neg E(a_s, a_t)$.  However, the proof of Theorem \ref{t:dir2} shows that we may find 
$\Psi \geq \Phi$ in which an analogous instance of dividing does occur ``along the skeleton''.  Informally, first choose an elementary extension of $M$ 
in which there are many nonstandard classes, interpret a new function symbol $G$ which maps each $a_t$ from $\ma$ to an element in a distinct nonstandard $E$-class, 
and apply the Ramsey property to obtain $\Psi$. Then letting $t$ be any nonalgebraic element of  any $\aleph_0$-saturated 
$J \supseteq I$ and $\vp(x,c)$ be any formula of $\GEM_{\tau(T)}(I, \Phi)$ such that 
$c = G(a_t)$ for some $t \in I$, the sequence $\langle G(a_s) : \tpqf(s,\emptyset, J) = \tpqf(t, \emptyset, I)$ will be in distinct $E$-equivalence classes, 
and so will witness the dividing of $\vp(x,c) = E(x,c)$. 
\end{proof}

\br
This example may be easily modified with finitely many equivalence relations to produce examples where the given $\GEM$-model 
does or does not ``witness'' the natural ``superstability rank'' (for dividing or shearing), and even more, showing the 
importance of the template $\Psi$ in ``witnessing'' shearing:

\begin{expl}
Let $\xc$ be a countable context, so $I = I_\xc$. 
It may be the case that for every nonalgebraic formula $\vp(x,c)$ of some $\GEM_{\tau(T)}(I, \Phi)$, if we write 
$\vp$ as $\vp(x,\bar{\sigma}(\bar{a}_{\bar{t}})) $
for some sequence $\bar{\sigma}$ of $\tau(\Phi)$-terms and $\bar{a}_{\bar{t}}$ of the appropriate length, then 
\[ \{  \vp(x,\bar{\sigma}(  \bar{a}_{\bar{s}} )  )  : \tpqf(\bar{s}, \emptyset, J) = \tpqf(\bar{t}, \emptyset, I) \} \]
is consistent in $\GEM(J, \Phi)$ for every $J \supseteq I$, \emph{even though} $T$ is not superstable for the context $\xc$.  
\end{expl}

\begin{proof}
Let $T$ be the theory of infinitely many equivalence relations, $\{ E_i : i < \omega \}$, where each $E_i$ has 
infinitely many infinite classes and for each $i<\omega$, each $E_{i+1}$-class is the union of infinitely many $E_i$-classes. 
This theory is stable but not superstable. 
The previous example extends naturally to this case, 
provided we have set up the $\GEM$-model $M$ so that the skeleton $\ma$ is an infinite sequence within a single class for each $E_i$, with Skolem functions $\langle F_{n,i} : n < \omega \rangle$ interpreted so that $F^M_{n,i}$ copies $\ma$ over to the $n$-th class of the $i$-th equivalence relation. 
\end{proof}

\vspace{5mm}

\section{The separation theorem}
\setcounter{theoremcounter}{0}

\begin{theorem}[Separation Theorem] \label{firstmainth}
Let $T_0, T_1$ be any two theories, without loss of generality in disjoint signatures, and of any size. Suppose there exists a countable context $\xc$ such that $T_0$ is $\xc$-superstable and 
$T_1$ is $\xc$-unsuperstable.  Then for any $\Phi \in \Upsilon_\xc$, for arbitrarily large $\mu$, there exists 
$\Psi \geq \Phi$ with $\Psi \in \Upsilon_\xc[T_0] \cap \Upsilon_\xc[T_1]$ such that 
writing $M = \GEM(I, \Psi)$, we have that $M \rstr \tau(T_0)$ is $\mu$-saturated but $M \rstr \tau(T_1)$ is not $\aleph_1$-saturated. 
\end{theorem}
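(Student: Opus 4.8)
The plan is to deduce this by combining the two directions already established — Claim~\ref{m20a}, which builds a highly saturated $\tau(T_0)$-reduct from $\xc$-superstability of $T_0$, and Theorem~\ref{t:dir2}, which produces a template $\Phi_*$ such that the $\tau(T_1)$-reduct of $\GEM(I,\Psi)$ fails $\aleph_1$-saturation for \emph{every} $\Psi \geq \Phi_*$. The point is to apply these in the right order: since the second conclusion is uniform upward in the template, I would first pass to $\Phi_*$ (locking in the failure for $T_1$) and only afterwards run the saturation construction for $T_0$ on top of $\Phi_*$, so that enlarging the template cannot undo the $T_1$-failure.

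In detail, start from $\Phi \in \Upsilon_\xc$. Applying Observation~\ref{cont-for-t} to $T_0$ gives $\Phi' \geq \Phi$ with $\Phi' \in \Upsilon_\xc[T_0]$; applying it again, now to $T_1$ with input $\Phi'$, gives $\Phi_1 \geq \Phi'$ with $\Phi_1 \in \Upsilon_\xc[T_1]$. Note $\Phi_1 \in \Upsilon_\xc[T_0]$ as well, since the $\tau(T_0)$-Skolem terms living in $\tau(\Phi') \subseteq \tau(\Phi_1)$ remain Skolem functions: the $\tau(\Phi')$-reduct of $\GEM(I,\Phi_1)$ is an elementary extension of $\GEM(I,\Phi')$, and $T_{\Phi_1} \supseteq T_{\Phi'} \supseteq T_0$. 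Now apply Theorem~\ref{t:dir2} to the $\xc$-unsuperstable theory $T_1$ with input $\Phi_1 \in \Upsilon_\xc[T_1]$: this yields $\Phi_* \geq \Phi_1$ with $|\tau(\Phi_*)| \leq |T_1| + |\tau(\Phi_1)| + \aleph_0$ such that $\GEM_{\tau(T_1)}(I,\Psi)$ is not $\aleph_1$-saturated for every $\Psi \geq \Phi_*$; by the same reduct observation, $\Phi_* \in \Upsilon_\xc[T_0] \cap \Upsilon_\xc[T_1]$.

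Next, fix the target saturation. Given any cardinal $\nu$, choose $\mu > \nu + |T_0| + |\tau(\Phi_*)|$ and $\lambda$ with $\lambda = \lambda^{<\mu}$ and $\lambda \geq |\tau(\Phi_*)|$ — this is possible for arbitrarily large $\mu$. Then $\Phi_* \in \Upsilon_\xc[\lambda, T_0]$, so Claim~\ref{m20a}, applied with $T = T_0$ and $\Delta$ the set of all $\tau(T_0)$-formulas and with $\Phi_*$ in the role of the starting template, produces $\Psi \in \Upsilon_\xc[\lambda, T_0]$ with $\Psi \geq \Phi_*$ such that $\GEM_{\tau(T_0)}(I,\Psi)$ is $\mu$-saturated. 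Since $\Psi \geq \Phi_* \geq \Phi_1 \in \Upsilon_\xc[T_1]$, the reduct observation again gives $\Psi \in \Upsilon_\xc[T_1]$, hence $\Psi \in \Upsilon_\xc[T_0] \cap \Upsilon_\xc[T_1]$ and $\Psi \geq \Phi$.

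Finally, put $M = \GEM(I,\Psi)$. Then $M \rstr \tau(T_0) = \GEM_{\tau(T_0)}(I,\Psi)$ is $\mu$-saturated by the previous paragraph, while $M \rstr \tau(T_1) = \GEM_{\tau(T_1)}(I,\Psi)$ is not $\aleph_1$-saturated because $\Psi \geq \Phi_*$ and Theorem~\ref{t:dir2} applies. As $\mu$ may be taken above any prescribed $\nu$, this is exactly the assertion. The main thing to watch — and the reason the two constructions must be sequential rather than simultaneous — is that the template-enlarging induction of Claim~\ref{m20a} is performed \emph{over} $\Phi_*$: its extra symbols cannot spoil the omission of the $\tau(T_1)$-type witnessing failure of $\aleph_1$-saturation, precisely because Theorem~\ref{t:dir2} is stated uniformly for all $\Psi \geq \Phi_*$; and conversely the $\xc$-superstability of $T_0$ ensures that the types of $T_0$ can still be realized no matter what additional structure $\Phi_*$ already encodes.
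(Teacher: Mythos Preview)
Your proof is correct and follows essentially the same approach as the paper's: first pass to a template in $\Upsilon_\xc[T_0]\cap\Upsilon_\xc[T_1]$ via Observation~\ref{cont-for-t}, then apply Theorem~\ref{t:dir2} to lock in the $T_1$-failure uniformly upward, and finally run Claim~\ref{m20a} on top to achieve $\mu$-saturation for $T_0$. Your write-up is somewhat more explicit than the paper's about preserving membership in $\Upsilon_\xc[T_i]$ under template extension and about the choice of $\mu,\lambda$, but the argument is the same.
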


\begin{proof} Let $\kappa$ be any infinite cardinal. 
Let $\Phi \in \Upsilon_\xc$ be given. By applying Observation \ref{cont-for-t} twice, if necessary, we may assume 
$\Phi \in \Upsilon_\xc[T_0] \cap \Upsilon_\xc[T_1]$.   
By Theorem \ref{t:dir2}, we may find 
$\Phi_1 \geq \Phi$ such that $\GEM_{\tau(T_1)}(I, \Phi^\prime)$ will not be $\aleph_1$-saturated for any $\Phi^\prime \geq \Phi_1$. 
Next, 
choose $\mu$ and $\lambda$ so that $\mu \geq \kappa$ and $\mu, \lambda$ satisfy the hypotheses of Claim \ref{m20a}. 
Apply Claim \ref{m20a} to find $\Phi_2 \geq \Phi_1$ so that $\GEM_{\tau(T_0)}(I, \Phi_2)$ is $\mu$-saturated. 
Then $\Psi = \Phi_2$ is as desired. 
\end{proof} 

\begin{cor} \label{st-cor}
Let $\xc = (I, \mk)$ be a countable context.  If $T_0$ is superstable for $\xc$ and $T_1$ is not superstable for $\xc$, 
for \emph{every} theory $T_*$ interpreting both of them $($without loss of generality in disjoint signatures$)$, 
and for arbitrarily large $\mu$, there is 
a model $M_* \models T_*$ such that $M_* \rstr \tau(T_0)$ is $\mu$-saturated but 
$M_* \rstr \tau(T_1)$ is not $\aleph_1$-saturated. 
\end{cor}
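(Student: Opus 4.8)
The plan is to reduce to the disjoint-signature case and then run the proof of Theorem~\ref{firstmainth} essentially verbatim, with the one change that the whole construction is carried out inside the class $\Upsilon_\xc[T_*]$ rather than merely inside $\Upsilon_\xc[T_0]\cap\Upsilon_\xc[T_1]$. First I would invoke the parenthetical ``without loss of generality in disjoint signatures'': replacing $T_*$ by a definitional expansion and renaming, we may assume $\tau(T_0)$ and $\tau(T_1)$ are disjoint, both contained in $\tau(T_*)$, and $T_*\vdash T_0\cup T_1$. The transfer in both directions is routine: a model of the reduced $T_*$ restricts to a model of the original $T_*$ carrying the same interpreted copies of $T_0,T_1$; $\mu$-saturation of a reduct passes to a $\emptyset$-definable quotient; and failure of $\aleph_1$-saturation of a reduct which literally equals the interpreted structure is immediate. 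In this reduced setting one has $\Upsilon_\xc[T_*]\subseteq\Upsilon_\xc[T_0]\cap\Upsilon_\xc[T_1]$, since $\tau(T_i)\subseteq\tau(T_*)$ makes Skolem functions for $T_*$ in particular Skolem functions for $T_i$, and $T_\Phi\supseteq T_*$ gives $T_\Phi\supseteq T_i$.

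The one structural fact I would record separately is that $\Upsilon_\xc[T']$ is upward closed under $\leq_\Upsilon$, for any theory $T'$: if $\Phi\in\Upsilon_\xc[T']$ and $\Phi\leq\Psi$, then by Definition~\ref{d:order} $\GEM_{\tau(\Phi)}(I,\Phi)\preceq\GEM_{\tau(\Phi)}(I,\Psi)$, so every $\tau(\Phi)$-sentence true in $\GEM(I,\Phi)$ remains true in $\GEM(I,\Psi)$; applying this to the axioms of $T'$ and to the Skolem axioms witnessing that $T_\Phi$ has Skolem functions for $T'$ (all of which use only symbols of $\tau(T')\subseteq\tau(\Phi)$) gives $T_\Psi\supseteq T'$ with Skolem functions for $T'$, i.e.\ $\Psi\in\Upsilon_\xc[T']$. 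Consequently, once a template lies in $\Upsilon_\xc[T_*]$ so does every larger one, so the constructions inside the proofs of Theorem~\ref{t:dir2} and Claim~\ref{m20a}, which only ever pass to larger templates, automatically stay inside $\Upsilon_\xc[T_*]$ without any need to re-inspect them.

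The construction then goes as follows. Fix the target $\kappa$. By Observation~\ref{cont-for-t} applied to $T_*$, choose $\Phi_0\in\Upsilon_\xc[T_*]$. Since $\Phi_0\in\Upsilon_\xc[T_1]$, Theorem~\ref{t:dir2} applied to $T_1$ yields $\Phi_*\geq\Phi_0$ (with $|\tau(\Phi_*)|\leq|\tau(\Phi_0)|+|T_1|+\aleph_0$) such that $\GEM_{\tau(T_1)}(I,\Psi')$ is not $\aleph_1$-saturated for every $\Psi'\geq\Phi_*$. Choose cardinals $\mu\geq\kappa$ and $\lambda\geq|\tau(\Phi_*)|$ satisfying the hypotheses of Claim~\ref{m20a} for $T_0$, i.e.\ $\mu>|T_0|$ and $\lambda=\lambda^{<\mu}$ (such cardinals exist, as in the proof of Theorem~\ref{firstmainth}). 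Then $\Phi_*\in\Upsilon_\xc[\lambda,T_0]$, so Claim~\ref{m20a} (with $T=T_0$ and $\Delta$ the set of all formulas) gives $\Psi\geq\Phi_*$ in $\Upsilon_\xc[\lambda,T_0]$ with $\GEM_{\tau(T_0)}(I,\Psi)$ being $\mu$-saturated. By the upward-closure observation together with $\Phi_0\in\Upsilon_\xc[T_*]$, also $\Psi\in\Upsilon_\xc[T_*]$, so $T_\Psi\supseteq T_*$ and $M_*:=\GEM_{\tau(T_*)}(I,\Psi)$ is a model of $T_*$. Its reduct $M_*\rstr\tau(T_0)=\GEM_{\tau(T_0)}(I,\Psi)$ is $\mu$-saturated, and its reduct $M_*\rstr\tau(T_1)=\GEM_{\tau(T_1)}(I,\Psi)$ is not $\aleph_1$-saturated because $\Psi\geq\Phi_*$. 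Transferring back along the signature reduction yields the required model of the original $T_*$.

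I do not expect a genuine obstacle here: the real work is already contained in Theorem~\ref{firstmainth} (equivalently, in Theorem~\ref{t:dir2} and Claim~\ref{m20a}). The only points needing a little care are the bookkeeping around ``interpreting'' versus ``reduct'' — namely the reduction to disjoint signatures and the transfer of $\mu$-saturation and of failure of $\aleph_1$-saturation across it — together with the observation that $\Upsilon_\xc[T_*]$ is upward closed under $\leq_\Upsilon$, which is exactly what makes it legitimate to enlarge the template so as to saturate the $\tau(T_0)$-reduct without disturbing modelhood of $T_*$.
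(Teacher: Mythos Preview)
Your proof is correct and is essentially the intended derivation. The paper gives no separate proof for this corollary, treating it as immediate from Theorem~\ref{firstmainth}; your argument spells out precisely why, and the one genuinely new observation you make explicit---that $\Upsilon_\xc[T_*]$ is upward closed under $\leq_\Upsilon$, so that starting from some $\Phi_0\in\Upsilon_\xc[T_*]$ the template $\Psi$ produced by Theorem~\ref{firstmainth} still lies in $\Upsilon_\xc[T_*]$---is exactly the point needed to guarantee $\GEM_{\tau(T_*)}(I,\Psi)\models T_*$. You could have shortened slightly by citing Theorem~\ref{firstmainth} as a black box (with input $\Phi=\Phi_0$) rather than re-invoking its ingredients \ref{t:dir2} and \ref{m20a} separately, but there is no harm in doing so.
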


Note that one genre of corollary of the Separation Theorem is to point out various constraints on models arising as $\GEM$ models. 

\begin{concl}
If $\xc$ is a countable context and $T$ is $\xc$-unsuperstable, then for some $\Phi$, for every $\Psi \geq \Phi$, no $\GEM$ model $\GEM(I_\xc, \Psi) \models T$ is $\aleph_1$-saturated, already when restricted to $\tau(T)$. 
\end{concl}

\vspace{5mm}

\section{Consequences for $\tlf^*$} \label{s:tlf}
\setcounter{theoremcounter}{0}

In this section we apply \ref{st-cor} to obtain a series of results about the structure of the interpretability order $\tlf^*_1$.   
The setup suggests a method for obtaining various further results. 
We emphasize that 
all results are in ZFC. 

\begin{conv} 
All theories in this section are complete. 
\end{conv}

Recall that $\tlf^*_1$ means (for readers used to all three subscripts): $\tlf^*_{\lambda, \chi, \kappa}$, i.e. 
for all sufficiently large $\lambda$, for $\chi = |T_0|+|T_1|$, for $\kappa = 1$ (so ``for every $1$-saturated model'' 
abbreviates ``for every model'').  For a complete definition and motivation, 
see sections 1-2 of \cite{MiSh:1124}.

First we recall a fact which spells out the sense in which $\tlf^*_1$ naturally refines Keisler's order $\tlf$. In the context of Keisler's order, writing $\tlf_\lambda$ means that we restrict to regular ultrafilters on $\lambda$.

\begin{fact}[\cite{MiSh:1124} Corollary 2.11] \label{f10} If for arbitrarily large $\lambda$ we have $T_0 \tlf_\lambda T_1$ in Keisler's order, then 
$\neg (T_1 \tlf^*_1 T_0)$. 
\end{fact}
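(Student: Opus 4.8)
The plan is to prove the implication directly: assuming $T_0 \tlf_\lambda T_1$ in Keisler's order for arbitrarily large $\lambda$, I would exhibit a witness to $\neg(T_1 \tlf^*_1 T_0)$ of exactly the shape produced in Corollary \ref{st-cor} — for arbitrarily large $\mu$, a model of a (complete) theory $T_*$ interpreting both $T_0$ and $T_1$, without loss of generality in disjoint signatures, whose reduct to $\tau(T_0)$ is $\mu$-saturated while its reduct to $\tau(T_1)$ is not $\aleph_1$-saturated. The conceptual point is that the Keisler-order hypothesis plays here the role that $\xc$-superstability of $T_0$ (together with $\xc$-unsuperstability of $T_1$) played in the Separation Theorem: it is the input that lets one build a single structure in which $T_0$ is very saturated and $T_1$ is not.

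Concretely, I would reduce the problem to locating one ``separating'' regular ultrafilter. Fix $\mu$ large and choose $\lambda \geq \mu$ with $T_0 \tlf_\lambda T_1$. Let $T_*$ be any completion of $T_0 \cup T_1$ in the union signature (the signatures being disjoint, every model $N_*$ of $T_*$ has $N_* \rstr \tau(T_i) \models T_i$), and fix $N_* \models T_*$. For a regular ultrafilter $\mathcal{D}$ on $\lambda$ the ultrapower $N_*^{\lambda}/\mathcal{D}$ again models $T_*$, and since reducts commute with ultrapowers, $(N_*^{\lambda}/\mathcal{D})\rstr \tau(T_i) = (N_*\rstr\tau(T_i))^{\lambda}/\mathcal{D}$. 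Using the defining feature of regular ultrapowers underlying Keisler's order — whether $N^{\lambda}/\mathcal{D}$ is $\lambda^{+}$-saturated depends only on $\mathcal{D}$ and $\mathrm{Th}(N)$ — it then suffices to produce a regular ultrafilter $\mathcal{D}$ on $\lambda$ that $\lambda^{+}$-saturates $T_0$ but does not $\aleph_1$-saturate $T_1$: for such a $\mathcal{D}$, the model $M_* := N_*^{\lambda}/\mathcal{D}$ has its $\tau(T_0)$-reduct $\lambda^{+}$-saturated (hence $\mu$-saturated) and its $\tau(T_1)$-reduct not $\aleph_1$-saturated, i.e. it witnesses $\neg(T_1 \tlf^*_1 T_0)$.

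The step I expect to be the main obstacle is precisely the construction of this $\mathcal{D}$, and it is here that the hypothesis $T_0 \tlf_\lambda T_1$ is used: it says that, as seen by regular ultrapowers, $T_1$ is at least as hard to saturate as $T_0$, so that (in the situations where this Corollary is invoked, where $T_0$ and $T_1$ are genuinely Keisler-inequivalent) there is a regular ultrafilter lying ``above'' the saturation class of $T_0$ but not that of $T_1$. Such an ultrafilter is obtained by the standard Keisler-order technique of building $\mathcal{D}$ on $\lambda$ by an increasing chain of ultrafilters, ensuring enough goodness to saturate $T_0$ (for instance that $\mathcal{D}$ is $\lambda^{+}$-good, or the weaker property tailored to $T_0$-types) while at each stage deliberately keeping unrealized a fixed countable family of $\tau(T_1)$-instances whose realization the complexity of $T_1$ obstructs, so that the resulting $T_1$-ultrapower already fails $\aleph_1$-saturation. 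Feeding this $\mathcal{D}$ into the reduction above finishes the proof; the remaining ingredients (existence of $T_*$ and $N_*$, commutation of reducts with ultrapowers, and saturation of a regular ultrapower being a property of the theory alone) are routine.
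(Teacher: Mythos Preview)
The paper does not prove this statement; it is quoted as a fact from \cite{MiSh:1124}, Corollary 2.11. So there is no ``paper's own proof'' to compare against. That said, your overall architecture --- take a completion $T_*$ of $T_0\cup T_1$, form a regular ultrapower of some $N_*\models T_*$, and use that reducts commute with ultrapowers --- is exactly the right route and is essentially how the cited corollary is proved.

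There are, however, two real gaps in your write-up.

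\textbf{You are aiming at the wrong target.} You try to produce, for arbitrarily large $\mu$, a model whose $\tau(T_1)$-reduct is not $\aleph_1$-saturated, modelling your goal on Corollary~\ref{st-cor}. But $\neg(T_1\tlf^*_1 T_0)$ does not require that: it only asks that for arbitrarily large $\lambda$ there exist $T_*$ and $M_*\models T_*$ with $M_*\rstr\tau(T_0)$ $\lambda^+$-saturated and $M_*\rstr\tau(T_1)$ not $\lambda^+$-saturated. Keisler's order speaks only about $\lambda^+$-saturation; nothing in the hypothesis lets you push the failure down to $\aleph_1$. Once you relax the goal to $\lambda^+$-unsaturation, the whole ``construct $\mathcal D$ by an increasing chain, keeping a countable family of $T_1$-instances unrealized'' paragraph is unnecessary (and, as written, is not an argument).

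\textbf{You never actually extract a separating ultrafilter from the hypothesis, and with the non-strict hypothesis you cannot.} The assumption $T_0\tlf_\lambda T_1$ says only that every regular $\mathcal D$ on $\lambda$ which $\lambda^+$-saturates $T_1$ also $\lambda^+$-saturates $T_0$. It does \emph{not} produce a $\mathcal D$ that saturates $T_0$ and not $T_1$; for that you need $\neg(T_1\tlf_\lambda T_0)$, i.e.\ the strict inequality. Your own parenthetical (``in the situations where this Corollary is invoked, where $T_0$ and $T_1$ are genuinely Keisler-inequivalent'') concedes this. Indeed, read literally with the non-strict $\tlf$, the Fact would fail for $T_0=T_1$; in every application in the present paper the inequality is in fact strict, and that is the hypothesis you should work under. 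With the strict reading, the separating $\mathcal D$ is handed to you \emph{by definition} --- there is nothing to build --- and then $M_*:=N_*^{\lambda}/\mathcal D$ finishes the proof immediately.
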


\begin{claim} \label{c5} 
Let $T_0$, $T_1$ be complete countable theories and let $\xc$ be a countable context. 
Suppose $T_0$ is $\xc$-superstable and $T_1$ is $\xc$-unsuperstable for $\xc$. Then 
\[ \neg (T_1 \tlf^*_1 T_0 ). \]
\end{claim}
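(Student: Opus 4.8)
The plan is to deduce Claim~\ref{c5} directly from Corollary~\ref{st-cor} together with the comparison between $\tlf^*_1$ and saturation recorded in Fact~\ref{f10} (really its proof-content), so essentially nothing new is needed beyond invoking the separation theorem in the right form. First I would recall what $T_1 \tlf^*_1 T_0$ would mean: unwinding the definition from \cite{MiSh:1124} as spelled out just above, $T_1 \tlf^*_1 T_0$ asserts that for all sufficiently large $\lambda$ and $\chi = |T_0|+|T_1|$, for every model $M_*$ of any theory $T_*$ interpreting both $T_0$ and $T_1$ (w.l.o.g.\ in disjoint signatures), if $M_* \rstr \tau(T_1)$ is $\lambda^+$-saturated then $M_* \rstr \tau(T_0)$ is $\lambda^+$-saturated (or the appropriate $\chi$-corrected version). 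Since $T_0, T_1$ are countable here, $\chi = \aleph_0$ and the relevant saturation thresholds are the usual ones.

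The key step is then to produce, for arbitrarily large $\mu$, a single model $M_*$ of $T_* = T_0 \cup T_1$ (disjoint signatures, so this union is consistent and interprets both) such that $M_* \rstr \tau(T_0)$ is $\mu$-saturated while $M_* \rstr \tau(T_1)$ is \emph{not} $\aleph_1$-saturated. This is exactly the content of Corollary~\ref{st-cor}, applied with the given countable context $\xc$ and the hypotheses that $T_0$ is $\xc$-superstable and $T_1$ is $\xc$-unsuperstable. Such an $M_*$ is a direct witness against $T_1 \tlf^*_1 T_0$: taking $\mu$ large (in particular $\mu > \lambda$ for the $\lambda$ demanded by the definition of $\tlf^*_1$), $M_* \rstr \tau(T_1)$ fails to be even $\aleph_1$-saturated, hence a fortiori fails to be $\lambda^+$-saturated, whereas $M_* \rstr \tau(T_0)$ is $\mu$-saturated, hence $\lambda^+$-saturated. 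So the implication ``$\tau(T_1)$-reduct $\lambda^+$-saturated $\implies$ $\tau(T_0)$-reduct $\lambda^+$-saturated'' is vacuously irrelevant here; what we need is the reverse failure pattern, and indeed $\tlf^*_1$ is about $T_1$ being \emph{at least as complex as} $T_0$, i.e.\ it requires that whenever the $\tau(T_0)$-reduct is saturated the $\tau(T_1)$-reduct is too — and our $M_*$ violates precisely this, since its $\tau(T_0)$-reduct is $\mu$-saturated but its $\tau(T_1)$-reduct is not $\aleph_1$-saturated. Thus $\neg(T_1 \tlf^*_1 T_0)$.

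Concretely, the proof I would write is short: ``Let $\lambda$ be arbitrarily large; by Corollary~\ref{st-cor} applied to $\xc$, $T_0$, $T_1$ and the theory $T_* = T_0 \cup T_1$ in disjoint signatures, there is $\mu \geq \lambda^+$ and a model $M_* \models T_*$ with $M_* \rstr \tau(T_0)$ $\mu$-saturated and $M_* \rstr \tau(T_1)$ not $\aleph_1$-saturated. Since $M_* \rstr \tau(T_0)$ is then $\lambda^+$-saturated but $M_* \rstr \tau(T_1)$ is not $\lambda^+$-saturated, $M_*$ witnesses that the defining condition of $T_1 \tlf^*_1 T_0$ fails at $\lambda$. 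As $\lambda$ was arbitrarily large, $\neg(T_1 \tlf^*_1 T_0)$.'' The only genuinely delicate point — and the one I would be most careful about — is lining up the quantifier structure and the $\chi$-correction in the definition of $\tlf^*_{\lambda,\chi,\kappa}$ from \cite{MiSh:1124} with the statement of Corollary~\ref{st-cor}, to make sure ``not $\aleph_1$-saturated'' really does defeat the relevant saturation requirement for all large $\lambda$; since $T_0, T_1$ are countable this correction is trivial, but one should state it. There is no real obstacle beyond this bookkeeping, as all the hard work is already in Theorem~\ref{firstmainth}/Corollary~\ref{st-cor}.
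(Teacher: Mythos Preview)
Your approach is correct and is exactly the paper's: the paper's proof is the single sentence ``This just applies Corollary~\ref{st-cor} to the definition of $\tlf^*_1$,'' and your writeup is simply an unpacking of that line. One small note: your first paragraph states the defining implication in the wrong direction before you catch and fix it in the second paragraph, and the reference to Fact~\ref{f10} is unnecessary---you only need the definition of $\tlf^*_1$ and Corollary~\ref{st-cor}.
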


\begin{proof} 
This just applies Corollary \ref{st-cor} to the definition of $\tlf^*_1$.  
\end{proof}

\begin{cor}
Let $T_0$, $T_1$ be complete countable theories.. 
Suppose $T_0$ is supersimple and $T_1$ is not supersimple.  Then 
\[ \neg (T_1 \tlf^*_1 T_0 ). \]
\begin{proof}
Claim \ref{c5} and Claim \ref{needed3}.  
\end{proof}

\end{cor}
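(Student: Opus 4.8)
The plan is to combine the two ingredients already assembled in the paper: the ``universal separating context'' of Claim \ref{needed3} and the non-comparability consequence of Claim \ref{c5}. No new construction is needed; the work is in quoting these correctly.

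First I would invoke Claim \ref{needed3} to fix a single countable context $\xc = (I, \mk)$ with the property that every theory which is supersimple in the usual sense is $\xc$-superstable, and every theory which is not supersimple in the usual sense is $\xc$-unsuperstable. This $\xc$ is furnished by Theorem \ref{n5} and does not depend on $T_0$ or $T_1$. Since $T_0$ is supersimple, it is therefore $\xc$-superstable; since $T_1$ is not supersimple, it is $\xc$-unsuperstable. Note both theories are complete and countable, as required for the next step.

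Next I would apply Claim \ref{c5} to this very context $\xc$ together with the ordered pair $(T_0, T_1)$: its hypotheses --- that $T_0$ is $\xc$-superstable and $T_1$ is $\xc$-unsuperstable for $\xc$ --- are exactly what the previous paragraph establishes, and its conclusion is precisely $\neg (T_1 \tlf^*_1 T_0)$. That is the assertion of the corollary, so the proof is complete.

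The one point worth emphasizing, and arguably the only subtlety, is that Claim \ref{needed3} provides a \emph{single} countable context that simultaneously certifies $\xc$-superstability for \emph{all} supersimple theories and $\xc$-unsuperstability for \emph{all} non-supersimple theories; this is what lets us feed $(T_0,T_1)$ into Claim \ref{c5} with no case analysis. Under the surface, the substantive content lives in Theorem \ref{n5} (building that context), in Claim \ref{needed2} via Claim \ref{needed1} (dividing implies shearing, hence failure of supersimplicity implies $\xc$-unsuperstability), and in the Separation Theorem machinery flowing through Corollary \ref{st-cor} into Claim \ref{c5} --- but none of that needs to be reproved here, so there is no genuine obstacle at this stage beyond correct bookkeeping of the cited statements.
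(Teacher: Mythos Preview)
Your proof is correct and follows exactly the same approach as the paper: invoke Claim \ref{needed3} to obtain a single countable context $\xc$ for which $T_0$ is $\xc$-superstable and $T_1$ is $\xc$-unsuperstable, then apply Claim \ref{c5}. The paper's own proof is just the two-word citation ``Claim \ref{c5} and Claim \ref{needed3},'' which you have unpacked faithfully.
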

For the next few results, Keisler's order is invoked in the proofs so we restrict to countable theories (for which Keisler's order is defined).

\begin{lemma}
Let $T_i$ be superstable with the fcp and let $T_j$ be strictly stable nfcp. Then $T_i$, $T_j$ are $\tlf^*_1$-incomparable. 
\end{lemma}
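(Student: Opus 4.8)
The plan is to obtain $\tlf^*_1$-incomparability by proving the two negative relations $\neg(T_j \tlf^*_1 T_i)$ and $\neg(T_i \tlf^*_1 T_j)$ separately, each from a different feature distinguishing $T_i$ and $T_j$: on the one hand $T_i$ is superstable while $T_j$ is not, and on the other hand $T_j$ has the nfcp while $T_i$ does not.

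For $\neg(T_j \tlf^*_1 T_i)$ I would argue along the superstability axis. Since $T_i$ is superstable it is supersimple; since $T_j$ is strictly stable it is stable but not superstable, hence not supersimple (for a stable theory supersimplicity coincides with superstability). Now take the countable context $\xc$ produced by Claim \ref{needed3}, for which every supersimple theory is $\xc$-superstable and every non-supersimple theory is $\xc$-unsuperstable; then $T_i$ is $\xc$-superstable and $T_j$ is $\xc$-unsuperstable, and Claim \ref{c5} delivers $\neg(T_j \tlf^*_1 T_i)$. This step is exactly the Corollary stated just above the lemma, applied with $T_0 = T_i$ and $T_1 = T_j$.

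For $\neg(T_i \tlf^*_1 T_j)$ I would argue along the fcp axis, via Keisler's order. Recall that a countable theory without the finite cover property is minimal in Keisler's order (Shelah \cite{Sh:a}); hence $T_j$, which is nfcp, satisfies $T_j \tlf_\lambda T_i$ for arbitrarily large $\lambda$. Fact \ref{f10}, applied with $T_0 = T_j$ and $T_1 = T_i$, then gives $\neg(T_i \tlf^*_1 T_j)$. The hypothesis that $T_i$ has the fcp is what prevents the reverse Keisler inequality $T_i \tlf_\lambda T_j$ (if $T_i$ were nfcp as well, the two would be Keisler-equivalent), so this is genuinely the complementary half of the argument, not a reprise of the first. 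Putting the two negative relations together yields that $T_i$ and $T_j$ are $\tlf^*_1$-incomparable.

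I do not anticipate a real obstacle: the proof is a short assembly of results already in hand — Claims \ref{c5} and \ref{needed3} for one direction, Fact \ref{f10} for the other — the only imported ingredient being the classical fact that nfcp implies Keisler-minimality. The single point to watch is that both halves presuppose $T_i, T_j$ countable, which is the standing assumption of this subsection, made precisely so that Keisler's order is available.
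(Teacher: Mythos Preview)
Your proposal is correct and matches the paper's proof essentially line for line: one direction via Fact~\ref{f10} using that $T_j$ (nfcp) lies strictly below $T_i$ (fcp) in Keisler's order, the other via Claims~\ref{needed3} and~\ref{c5} using that $T_i$ is supersimple while $T_j$ is not. The only difference is that you present the two halves in the opposite order.
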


\begin{proof}
$T_j \tlf_\lambda T_i$ for arbitrarily large $\lambda$ in Keisler's order, so by \ref{f10} $\neg (T_i \tlf^*_1 T_j)$. For the other direction, let 
$\xc$ be from \ref{needed3}. 
Then $T_i$ is $\xc$-superstable and $T_j$ is $\xc$-unsuperstable, so by \ref{c5}, 
$\neg (T_j \tlf^*_1 T_i)$. 
\end{proof}

\begin{concl}
The interpretability order $\tlf^*_1$ is not linear even on the stable theories. 
\end{concl}

\begin{theorem} \label{t-sim} Let $T_i$, $T_j$ be complete countable theories. Suppose $T_i$ is strictly stable.  
Suppose $T_j$  is supersimple unstable.   Then $T_i$ and $T_j$ are $\tlf^*_1$-incomparable. 
\end{theorem}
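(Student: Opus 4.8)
The plan is to prove this by showing both non-comparabilities separately, using Keisler's order in one direction and the machinery of $\xc$-superstability in the other.

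\begin{proof}[Proof proposal]
First I would dispatch the direction $\neg(T_i \tlf^*_1 T_j)$ by appeal to Keisler's order. Since $T_i$ is strictly stable, it lies in the stable part of Keisler's order, while $T_j$, being simple unstable, lies strictly above the stable theories; more precisely, the known structure of Keisler's order (the union of the first two classes being exactly the stable theories, so any unstable theory is strictly Keisler-above any stable one) gives $T_i \tlf_\lambda T_j$ for arbitrarily large $\lambda$. By Fact \ref{f10} this yields $\neg(T_j \tlf^*_1 T_i)$ — but note I need the other inequality, so I should instead observe: $T_i$ being strictly stable and $T_j$ supersimple unstable, we still have $T_i \tlf_\lambda T_j$ for arbitrarily large $\lambda$ (every stable theory is Keisler-below every unstable theory that is its Keisler-upper bound), hence by Fact \ref{f10}, $\neg(T_j \tlf^*_1 T_i)$. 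For the reverse, $\neg(T_i \tlf^*_1 T_j)$: here I would want that $T_j$, being supersimple, sits low enough that it cannot code the strictly stable $T_i$; this is the direction that needs the new machinery.

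For $\neg(T_i \tlf^*_1 T_j)$, the plan is to produce a countable context $\xc$ witnessing that $T_j$ is $\xc$-superstable while $T_i$ is $\xc$-unsuperstable, and then apply Claim \ref{c5}. Take $\xc$ to be the context supplied by Claim \ref{needed3}, which has the property that every supersimple theory is $\xc$-superstable and every non-supersimple theory is $\xc$-unsuperstable. Since $T_j$ is supersimple, it is $\xc$-superstable. Since $T_i$ is strictly stable, in particular $T_i$ is stable but not superstable, hence not supersimple (supersimple stable = superstable), so $T_i$ is $\xc$-unsuperstable. Now Claim \ref{c5}, applied with $T_0 = T_j$ and $T_1 = T_i$, gives $\neg(T_i \tlf^*_1 T_j)$, as desired.

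Combining the two paragraphs: we have shown $\neg(T_j \tlf^*_1 T_i)$ and $\neg(T_i \tlf^*_1 T_j)$, i.e.\ $T_i$ and $T_j$ are $\tlf^*_1$-incomparable.
\end{proof}

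The main obstacle I anticipate is making sure the Keisler's-order input is correctly oriented and correctly cited: one needs that a strictly stable theory is $\tlf_\lambda$-below a supersimple \emph{unstable} theory for arbitrarily large $\lambda$, which follows from the classical fact that the first two Keisler classes are exactly the stable theories (so any unstable theory is a strict Keisler-upper bound for any stable one), together with the fact that supersimple theories are minimal among the unstable theories in a suitable sense — but actually all that is needed is $T_i \tlf_\lambda T_j$, and since $T_i$ is stable this holds against \emph{any} unstable $T_j$. The other potential friction point is verifying "strictly stable $\Rightarrow$ not supersimple," which is immediate since supersimple $+$ stable $=$ superstable, contradicting strict stability. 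Everything else is a direct invocation of Claim \ref{c5}, Claim \ref{needed3}, and Fact \ref{f10}.
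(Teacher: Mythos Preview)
Your proof is correct and follows essentially the same approach as the paper: one direction via Keisler's order and Fact~\ref{f10} (stable below unstable), the other via the context $\xc$ of Claim~\ref{needed3} and Claim~\ref{c5} (supersimple is $\xc$-superstable, strictly stable is not supersimple hence $\xc$-unsuperstable). Your first paragraph is a bit tangled in its bookkeeping of which inequality you are proving, but the substance is right; in fact the paper's own proof contains a typo, concluding $\neg(T_j \tlf^*_1 T_i)$ twice where the second should read $\neg(T_i \tlf^*_1 T_j)$, so your version is arguably cleaner once the exposition is straightened out.
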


\begin{proof}
We know $T_i \triangleleft_\lambda T_j$ in Keisler's order for arbitrarily large $\lambda$, so $\neg (T_j \tlf^*_1 T_i)$. 
For the other direction, let $\xc$ be from $\ref{needed3}$. 
$T_i$ is not $\xc$-superstable, because it is strictly stable, but 
$T_j$ is $\xc$-superstable, because it is supersimple. So by \ref{c5}, 
$\neg (T_j \tlf^*_1 T_i)$. 
\end{proof}

Theorem \ref{t-sim} has various immediate, but more quotable, corollaries. 
Recall from \cite{MiSh:1124} that $\trg$ (theory of the random graph) is $\tlf^*_1$-minimum among unstable theories. 

\begin{concl} It is not the case that 
all stable theories are below all unstable theories in $\tlf^*_1$. 
\end{concl}

\begin{cor}
Let $T$ be countable and strictly stable. Then $T$ and $\trg$ are $\tlf^*_1$-incomparable. 
\end{cor}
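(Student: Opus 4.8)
The plan is to derive this directly from Theorem \ref{t-sim}. First I would observe that $\trg$, the theory of the random graph, is simple with only trivial forking, hence supersimple, and it is of course unstable. So $\trg$ is supersimple unstable, and it is countable. This means the pair $(T_i, T_j) = (T, \trg)$ satisfies exactly the hypotheses of Theorem \ref{t-sim}: $T$ is a complete countable strictly stable theory, and $\trg$ is a complete countable supersimple unstable theory.

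Next I would simply invoke Theorem \ref{t-sim} with this substitution to conclude that $T$ and $\trg$ are $\tlf^*_1$-incomparable, which is precisely the assertion to be proved. For completeness I might recall the two directions the way Theorem \ref{t-sim} establishes them: on one side, $T \triangleleft_\lambda \trg$ in Keisler's order for arbitrarily large $\lambda$ (as the random graph is $\tlf$-above the strictly stable theories), so Fact \ref{f10} gives $\neg(\trg \tlf^*_1 T)$; on the other side, fixing a countable context $\xc$ as in Claim \ref{needed3}, $T$ is $\xc$-unsuperstable (being strictly stable, hence not supersimple) while $\trg$ is $\xc$-superstable (being supersimple), so Claim \ref{c5} gives $\neg(T \tlf^*_1 \trg)$. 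Together these yield incomparability.

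There is really no obstacle here: the statement is a pure specialization of Theorem \ref{t-sim}, and the only thing one must check is the elementary fact that $\trg$ is supersimple unstable, which is classical. If anything, the "hard" content has already been packaged into Theorem \ref{t-sim} (which in turn rests on Claim \ref{c5}, Corollary \ref{st-cor}, and ultimately the Separation Theorem and Claim \ref{needed3}), so the proof of this corollary is a one-line appeal.

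\begin{proof}
The theory $\trg$ of the random graph is simple with trivial forking, hence supersimple, and it is unstable; it is also countable. Thus taking $T_i = T$ and $T_j = \trg$, the hypotheses of Theorem \ref{t-sim} are satisfied, and that theorem gives that $T$ and $\trg$ are $\tlf^*_1$-incomparable.
\end{proof}
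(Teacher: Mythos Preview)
Your proposal is correct and is essentially the paper's own approach: the paper states this as an immediate corollary of Theorem \ref{t-sim} with no separate proof, and your observation that $\trg$ is countable, supersimple, and unstable is exactly what is needed to plug into that theorem.
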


\begin{cor}
Let $T$ be Hrushovski's strictly stable $\aleph_0$-categorical pseudoplane. Then $T$ and $\trg$ are $\tlf^*_1$-incomparable.
\end{cor}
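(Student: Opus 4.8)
The plan is to reduce this to Theorem \ref{t-sim}, so essentially no new argument is needed beyond checking two standard classifications. First I would recall that the random graph $\trg$ is supersimple and unstable: it is unstable via the order property for the edge relation, and it is simple with only trivial forking (see \cite{Sh:93}), so in particular every complete type has a finite base and $\trg$ is supersimple. Thus $\trg$ can play the role of the supersimple unstable theory $T_j$ in Theorem \ref{t-sim}.

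Next I would recall that Hrushovski's $\aleph_0$-categorical pseudoplane $T$ is countable and \emph{strictly stable}, i.e.\ stable but not superstable; this is precisely the point of that construction. Hence $T$ can play the role of the strictly stable theory $T_i$ in Theorem \ref{t-sim}. Applying that theorem with $T_i = T$ and $T_j = \trg$ then gives at once that $T$ and $\trg$ are $\tlf^*_1$-incomparable, which is the desired statement. (One can equally phrase this as the special case of the Corollary immediately above, taking the abstract strictly stable theory there to be the pseudoplane.)

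The only substantive ingredient, and hence the only possible obstacle, is knowing that Hrushovski's $\aleph_0$-categorical pseudoplane is strictly stable --- not superstable, and not simple unstable. This is a known feature of that example and is not reproved here; granting it, the corollary is immediate from Theorem \ref{t-sim}.
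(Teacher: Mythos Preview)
Your proposal is correct and matches the paper's intent: the corollary is stated without proof precisely because it is the special case of the immediately preceding corollary (or equivalently of Theorem \ref{t-sim}) obtained by taking $T$ to be Hrushovski's pseudoplane, which is countable and strictly stable. Your verification that $\trg$ is supersimple unstable and that the pseudoplane is strictly stable is exactly what is needed, and your closing remark that this is a special case of the preceding corollary is the paper's implicit argument.
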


\begin{concl}
The order $\tlf^*_1$ is not linear even on the countable $\aleph_0$-categorical theories.   Moreover, 
it is not linear even on countable $\aleph_0$-categorical graphs. 
\end{concl}

\begin{disc}
\emph{Of course, a priori we do not know that $\neg( T \tlf^*_1 T^\prime)$, or even just $\tlf$, means that there is a way to see the difference 
via superstability for some context. One could naturally define a new triangle ordering saying that $T_1$ below $T_2$ means that if $T_2$ is $\xc$-superstable for some countable context $\xc$ then so is $T_1$.}
\end{disc}

\vspace{5mm}

\section{Simple and supersimple} \label{s:simple}

\setcounter{theoremcounter}{0}

In this section we characterize the pairs $(T, \Delta)$ which are $\xc$-superstable for \emph{some} countable context $\xc$. 

\begin{disc}
\emph{In \cite{MiSh:1124} Lemma 7.10, we proved that   
that for any simple theory $T$ with $\kappa(T) \leq \kappa$, for arbitrarily large $\mu$, 
for a certain context (which took as a parameter $\kappa$), for every $\Phi$, there 
was $\Psi \geq \Phi$ such that $M = \GEM_{\tau(T)}(I, \Psi)$ is $\mu$-saturated.  Notice that this does not contradict the  
results of \S \ref{s:unss}, since that section used countability of the context in an essential way (informally, 
this is the case ``$\kappa(T) = \aleph_0$'').  Some further remarks in this line are given in \S \ref{other-kappa}.} 
\end{disc}

We will use the following index model class, which is Ramsey, \cite{MiSh:1124} Fact 3.20. 
Note that we demand $\mk_\xc$ have a linear order. 
\begin{defn} \label{n5def} \emph{ }
\begin{enumerate}
\item $\mk^{tr}_\kappa$ is the class of trees with $\kappa$ levels and lexicographic order which 
are normal, meaning 
that a member $\eta$ at a limit level is determined by 
\[ \{ \nu : \nu \tlf \eta \}. \]  
$($So the tree has the function $\cap (\eta, \nu) = \min \{ \rho : \rho \tlf \nu, \rho \tlf \eta \}$.$)$
\item We call $I \in \mk^{tr}_\kappa$ standard when the $i$th level, $P^I_i$, of $I$ consists of sequences of length $i$ and $n \in P_i$, $j<i$, 
$\eta \rstr j \in P_j$ and $\eta \rstr j \tlf_I \eta$, so every $I \in \mk^{tr}$ is isomorphic to a standard one 
$($this is justified by the assumption of normality$)$. 
\end{enumerate}
\end{defn}

\begin{fact}[\cite{MiSh:1124} 7.12, update] \label{n14}
Let $I \in \mk^{tr}_\kappa$ be standard with universe ${^{\omega > }{\{ 0 \}}}$. Suppose $\Delta$ is a set of formulas of $T$ 
such that every $\Delta$-type in every model of $T$ does not fork over some finite set. 
Then for every $\Phi \in \up$, there is $\Psi \in \up$ with 
$\Phi \leq \Psi$ such that $M = \GEM_{\tau(T)}(I, \Psi)$ is $\mu$-saturated for $\Delta$-types.  
\end{fact}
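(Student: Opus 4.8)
The plan is to build the required $\Psi$ by a transfinite induction realizing one type at a time, exactly in the spirit of the proof of Claim~\ref{m20a}, but using the specific tree index model $I \in \mk^{tr}_\kappa$ and the forking hypothesis on $\Delta$-types in place of the abstract $\xc$-superstability assumption. First I would fix a large enough $\mu$ and an auxiliary $\lambda = \lambda^{<\mu}$ with $\lambda \geq |\tau(\Phi)| + |T|$, and construct an increasing continuous chain $\langle \Phi_\alpha : \alpha \leq \lambda \rangle$ in $\up$ with $\Phi_0 = \Phi$, $|\tau(\Phi_\alpha)| \leq \lambda$, so that $\GEM_{\tau(T)}(I,\Phi_\lambda)$ is $\mu$-saturated for $\Delta$-types. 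As in \ref{m20a}, only the successor step needs description: at stage $\alpha$, $M_\alpha = \GEM_{\tau(T)}(I,\Phi_\alpha)$ has size $\leq \lambda$, so there are $\leq \lambda$ pairs $(A,p)$ with $A \subseteq M_\alpha$, $|A| < \mu$, and $p \in \ts_\Delta(A)$; I would realize all of them at once (adding $\lambda$-many new function symbols and one application of the Ramsey property, Corollary~\ref{d:ramsey-exp}) to obtain $\Phi_{\alpha+1} \geq \Phi_\alpha$.

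The real content is the single-type step: given $p \in \ts_\Delta(M_\alpha)$ over a set $A$ of size $<\mu$, produce $\Psi' \geq \Phi_\alpha$ in which $p$ is realized in $\GEM(I,\Psi')$. Here I would follow the structure of the proof of Claim~\ref{m20aa}: choose a detailed enumeration $p = \{ \vp_\beta(\bar x, \bar\sigma_\beta(\bar a_{\bar t_\beta})) : \beta < \beta_* \}$, pick an $\aleph_0$-saturated $J \supseteq I$ in $\mk^{tr}_\kappa$, and — crucially — use the forking hypothesis to choose a \emph{single} finite $\bar t_* \in \inc(I)$ (a finite subtree, essentially a node $\eta$ and its predecessors) over which $p$ does not fork, so that the candidate extension
\[ q(\bar x) = \{ \vp_\beta(\bar x, \bar\sigma_\beta(\bar a_{\bar t})) : \beta < \beta_*,\ \bar t \in \inc(J),\ \tpqf(\bar t, \bar t_*, J) = \tpqf(\bar t_\beta, \bar t_*, I) \} \]
is consistent in $\GEM(J,\Phi_\alpha)$. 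The argument that $q$ is a partial type is by contradiction: if a finite subset were inconsistent, conjoining formulas yields a single $\vp_\gamma$ whose instantiation along the $\mk$-indiscernible sequence $\langle \bar\sigma_\gamma(\bar a_{\bar t}) : \bar t \in Q^{J[\bar t_*]}_{\xr} \rangle$ is contradictory, i.e. $\vp_\gamma(\bar x, \bar\sigma_\gamma(\bar a_{\bar t_\gamma}))$ shears over $\GEM_{\tau(T)}(\bar t_*, \Phi_\alpha)$ for $(\text{the tree }\bar t_*,\ \bar t_* \cup \bar t_\gamma)$. But a tree-indexed $\mk^{tr}_\kappa$-indiscernible sequence in which the shearing is witnessed by incomparable extensions above $\eta$ would give rise to an actual forking (dividing) configuration for $p$ over the finite set $\bar t_*$ — contradicting the forking hypothesis on $\Delta$. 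Once $q$ is a partial type, I realize $q_{\bar s}$ for all $\bar s$ with $\tpqf(\bar s, \emptyset, J) = \tpqf(\bar t_*, \emptyset, I)$ in an elementary extension $N_\star$ of $\GEM(J,\Phi_\alpha)$, expand by new functions $F_\ell$ picking out these realizations, add Skolem functions, and apply the Ramsey property to get $\Psi' \geq \Phi_\alpha$ realizing $p$ in $\GEM(I,\Psi')$.

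The step I expect to be the main obstacle is precisely the translation between ``shearing over a finite subtree $\bar t_*$ in $\mk^{tr}_\kappa$'' and ``dividing/forking over a finite set of parameters in a model of $T$'': I need to verify that the tree structure of $I$ is rich enough that the contradictory instantiation along $Q^{J[\bar t_*]}_\xr$ really does produce an ordinary $k$-inconsistent indiscernible sequence (so that one can apply the forking hypothesis), and conversely that this sequence lives over a \emph{finite} parameter set so that ``does not fork over some finite set'' applies. This is where the normality of the trees, the presence of the lexicographic linear order on $\mk^{tr}_\kappa$, and the extraction of an ordinary indiscernible sub-sequence (along an antichain or along a branch, as needed) all get used; the bookkeeping of which finite $\bar t_*$ works — and the fact that the hypothesis gives it to us \emph{before} we choose the detailed enumeration, which is what makes the type (not just each formula) realizable — is the delicate part. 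Modulo this, the construction is a routine adaptation of \ref{m20aa}--\ref{m20a}, and one should also note, as in the local versions of those claims, that the argument goes through verbatim with $\ts$ replaced by $\ts_\Delta$ throughout.
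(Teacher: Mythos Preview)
The paper does not prove this statement: it is recorded as a Fact cited from \cite{MiSh:1124}~7.12 (and its local update), and is used as a black box in the proof of Lemma~\ref{t:simple-local}. So there is no proof in the present paper to compare your proposal against.

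That said, your outline has the right architecture --- it is the natural attempt to run the proof of Claims~\ref{m20aa}--\ref{m20a} with the non-forking hypothesis in place of abstract $\xc$-superstability --- but the step you flag as the obstacle is a genuine gap, not just bookkeeping. The proof of \ref{m20aa} needs, at the critical moment, that no further \emph{shearing} occurs past some finite $I_{m_*}$; your hypothesis gives only that no further \emph{forking} occurs past some finite parameter set. Since the paper explicitly shows (Conclusion~\ref{e:t32}) that shearing is strictly weaker than dividing, the implication ``no forking over $\GEM(\bar t_*,\Phi)$ $\Rightarrow$ no $(\bar t_*, \bar t)$-shearing'' is false for general contexts, and you must supply a context-specific argument for $\mk^{tr}_\kappa$ with $I={^{\omega>}\{0\}}$. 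Concretely: from a contradictory instantiation along $Q^{J[\bar t_*]}_\xr$ you need to extract an honest $k$-inconsistent \emph{linearly}-indiscernible sequence over a finite set; this is where normality of the tree, the lexicographic order, and the extraction of an ordinary indiscernible sequence along an antichain above the top node of $\bar t_*$ have to do real work. You gesture at this but do not carry it out.

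The proof in \cite{MiSh:1124} (Lemma~7.10 and its surroundings) predates the shearing machinery of the present paper and proceeds more directly from the tree structure: one uses non-forking to locate a node $\eta$ of the branch over which the type does not fork, then exploits tree-indiscernibility of the skeleton above $\eta$ to produce the realization, rather than passing through the abstract $q(\bar x)$ of \ref{m20aa}. Your route is in principle completable, but it reproves part of \cite{MiSh:1124} through a detour; if you want a self-contained argument, it is cleaner to work with the tree directly.
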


We will need the following local update of \cite{MiSh:1124} Lemma 7.12. 

\begin{defn} 
Recall that  $\kappa_{\operatorname{loc}}(T) = \aleph_0$ means that for every formula $\vp$, every 
$\vp$-type does not fork over a finite set, while $\kappa(\vp, T)  = \aleph_0$ means that 
every $\vp$-type does not fork over a finite set. 
\end{defn}

\begin{lemma} \label{t:simple-local}
Let $T$ be any complete theory and let $\vp$ be a formula of $T$ which is simple and $\kappa(\vp, T) = \aleph_0$. 
Then  
$(T, \{ \vp, \neg \vp \})$ is $\xc$-superstable for the countable context $({^{\omega >}\{0\}}, \mathcal{K}^{tr}_\kappa)$. 
\end{lemma}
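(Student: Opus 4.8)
The plan is to reduce Lemma \ref{t:simple-local} to the contrapositive of Definition \ref{m5a}: I will show that if $(T, \{\vp, \neg\vp\})$ were $\xc$-unsuperstable for $\xc = ({^{\omega>}\{0\}}, \mathcal{K}^{tr}_\kappa)$, then $\vp$ would fork over every finite set in some $\vp$-type, contradicting $\kappa(\vp, T) = \aleph_0$. So suppose toward a contradiction that $(T, \{\vp,\neg\vp\})$ is $\xc$-unsuperstable, witnessed by $\langle I_n : n < \omega\rangle$ with enumerations $\bar{s}_n$, by $\langle B_n : n < \omega\rangle$, and by a $\vp$-type $p$ over $B = \bigcup_n B_n$, so that $p \rstr B_{n+1}$ $(I_n, I_{n+1})$-shears over $B_n$. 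Since here $I = {^{\omega>}\{0\}}$ is the standard tree, I can be explicit about what the $I_n$'s look like: each $I_n$ is a finite subtree, and (after thinning, using Claim \ref{c:monot}) I may assume $I_{n+1}$ adds at least one node $t_n$ not in $\cl(I_n)$, with $t_n$ strictly below $t_{n+1}$ in tree-order or incomparable — the key point being that in $\mathcal{K}^{tr}_\kappa$, the quantifier-free type of a new node over the previous finite subtree is controlled by which meets it realizes with the already-present branches.

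Next I would unwind what $(I_n, I_{n+1})$-shearing of $p\rstr B_{n+1}$ over $B_n$ gives: a formula $\vp_n(\bar x, \bar c_n) \in p$ (an instance of $\vp$ or $\neg\vp$, hence without loss an instance of $\vp$ by replacing $\vp$ with $\vp \lor \neg\vp$-style bookkeeping — actually, since $\Delta = \{\vp,\neg\vp\}$ is closed under the relevant operation only loosely, I'd handle $\vp$ and $\neg\vp$ uniformly by noting each is a $\Delta$-formula), a $\mathcal{K}^{tr}_\kappa$-indiscernible sequence $\mb^n = \langle \bar b^n_{\bar s} : \bar s \in {^{\omega>}(J[I_n])}\rangle$ indiscernible over $B_n$ with $\bar b^n_{\bar s_{n+1}} = \bar c_n$, such that $\{\vp(\bar x, \bar b^n_{\bar t'}) : \tpqf(\bar t', \bar s_n, J) = \tpqf(\bar s_{n+1}, \bar s_n, I)\}$ is contradictory. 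Now I exploit the special structure of the tree: a $\mathcal{K}^{tr}_\kappa$-indiscernible sequence indexed over the copies of the node $\bar s_{n+1}$ above the fixed subtree $I_n$ restricts, in particular, to an \emph{ordinary} indiscernible sequence — I pick an infinite branch extension or an infinite antichain of realizations $\langle \bar t'_k : k < \omega\rangle$ of $\tpqf(\bar s_{n+1}, \bar s_n, J)$ (these exist and are linearly ordered by the lexicographic order on the tree), and the restriction $\langle \bar b^n_{\bar t'_k} : k < \omega\rangle$ is then indiscernible over $B_n$ in the usual sense while $\{\vp(\bar x, \bar b^n_{\bar t'_k}) : k < \omega\}$ is $m$-contradictory for some fixed $m$. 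That is exactly $k$-dividing of $\vp(\bar x, \bar c_n)$ over $B_n$ in the classical sense. Concatenating over all $n$, the $\vp$-type $\bigcup_n \{\vp_n(\bar x, \bar c_n)\}$ (or rather an appropriate completion) witnesses an infinite forking chain: $p\rstr B_{n+1}$ forks over $B_n$ via $\vp$ for every $n$, giving a $\vp$-type that forks over arbitrarily-chosen finite approximations, contradicting $\kappa(\vp, T) = \aleph_0$.

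The main obstacle I anticipate is the passage from ``$\mathcal{K}^{tr}_\kappa$-indiscernible over $B_n$, instantiated along realizations of $\tpqf(\bar s_{n+1}, \bar s_n, J)$'' to ``ordinary indiscernible sequence witnessing classical dividing'': I must check that the set of realizations of that quantifier-free type in a saturated $J \in \mathcal{K}^{tr}_\kappa$ really does contain an infinite lex-ordered subset on which the induced $f$-values form a genuine (order-)indiscernible sequence, and that the contradictoriness clause in Definition \ref{e5} survives restriction to that subset (it does, since contradictoriness of a larger set implies $k$-contradictoriness already on some finite subset, and by indiscernibility we can slide that finite subset to lie inside our chosen branch/antichain). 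I also need that $I = {^{\omega>}\{0\}}$ satisfies the nontriviality/reasonableness/non-1-triviality clauses of Definition \ref{d:context} so that it is a legitimate countable context — this should be routine from normality of the tree and the fact that it is finitely branching with $\omega$ levels, but it must be stated. A secondary bookkeeping point is that $\Delta = \{\vp, \neg\vp\}$ rather than a single formula: I'd absorb this by observing that $p$ in Definition \ref{m5aa} is a $\Delta$-type, each $\vp_n$ is of the form $\vp$ or $\neg\vp$, and forking of an instance of $\neg\vp$ still contradicts $\kappa(\vp, T) = \aleph_0$ because that hypothesis is about $\vp$-types in the sense that includes both $\vp$ and $\neg\vp$ instances. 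Once these checks are in place, the contradiction with $\kappa(\vp, T) = \aleph_0$ is immediate and the lemma follows.
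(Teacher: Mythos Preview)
Your approach differs from the paper's and has a genuine gap at the key step. The paper does not compare tree-shearing with ordinary dividing at all; it argues indirectly through the $\GEM$ machinery. Assuming $(T,\Delta)$ were $\xc$-unsuperstable, Corollary~\ref{t:dir2a} yields $\Phi_*$ such that $\GEM_{\tau(T)}(I,\Psi)$ omits some countable $\Delta$-type for every $\Psi\geq\Phi_*$, while Fact~\ref{n14} (imported from \cite{MiSh:1124}, whose hypothesis is exactly that every $\Delta$-type does not fork over a finite set) produces $\Psi\geq\Phi_*$ for which $\GEM_{\tau(T)}(I,\Psi)$ is saturated for $\Delta$-types --- contradiction.

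Your sliding step does not go through. First, the reduction to $|I_{n+1}\setminus I_n|=1$ is not given by Claim~\ref{c:monot}, which permits shrinking $I_0$ and \emph{enlarging} $I_1$, not shrinking $I_1$. More seriously, even granting a single new node $t_n$: in a saturated $J\in\mathcal{K}^{tr}_\kappa$ the realizations of $\tpqf(t_n,I_n,J)$ are nodes at a fixed level, and the qf-type over $I_n$ of a finite tuple of such nodes records all pairwise meet levels. To obtain an ordinary indiscernible sequence you must restrict to an antichain in which all pairwise meets lie at one fixed level; but the finite inconsistent set promised by Definition~\ref{e5}(5) may be indexed by a tuple whose pairwise meets sit at several different levels, and no tuple of that qf-type lives inside your uniform antichain. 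So contradictoriness need not pass to the restriction, and you have not exhibited classical dividing of $\vp_n(\bar x,\bar c_n)$ over $B_n$. (Theorem~\ref{n5} does say that for this $\xc$, $\xc$-superstability coincides with supersimplicity, but that global equivalence is established via the $\GEM$ route and Fact~\ref{n14}, not by matching individual shearing instances to dividing instances.)
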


\begin{proof}
Let $T$, $\vp$ be given and let $\Delta = \{ \vp, \neg \vp \}$.  
Let $\xc = (I, \mk)$ where $\mk = \mk^{tr}_\kappa$ and $I$ is standard with universe ${^{\omega >}\{0\}}$, so is  
a single branch. 
Suppose for a contradiction that $(T, \Delta)$ is not $\xc$-superstable.
Then Fact \ref{n14} gives us a $\Phi_*$ such that for every $\Psi \geq \Phi_*$, 
$\GEM_{\tau(T)}(I, \Psi)$ omits some $\vp$-type over a countable set.  For $\Psi \geq \Phi_*$ given by 
Fact \ref{n14}, we get a contradiction. 
\end{proof}

\begin{theorem} \label{n20a}
Let $T$ be first order complete and $\vp$ a formula of $T$. Then the following are equivalent:
\begin{enumerate}
\item[(a)] $\vp$ is simple and $\kappa(\vp, T) = \aleph_0$. 
\item[(b)] $\xc$-supersimple where $\mk_\xc = \mk^{tr}_\kappa$ and $I_\xc = {^{\omega>}\{0\}}$. 
\item[(c)] for some countable context $\xc$, $(T, \{ \vp, \neg \vp \})$ is $\xc$-superstable. 
\end{enumerate}
\end{theorem}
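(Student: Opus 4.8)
The plan is to establish the cycle of implications $(a) \Rightarrow (b) \Rightarrow (c) \Rightarrow (a)$, since $(b)$ is a special case of $(c)$ and the real content lies in going from the combinatorial/context-theoretic statements back to the local simplicity hypothesis. First I would observe that $(a) \Rightarrow (b)$ is precisely Lemma \ref{t:simple-local}: given that $\vp$ is simple and $\kappa(\vp, T) = \aleph_0$, that lemma tells us $(T, \{\vp, \neg\vp\})$ is $\xc$-superstable for the specific countable context $\xc = ({^{\omega>}\{0\}}, \mk^{tr}_\kappa)$, which is exactly assertion $(b)$. The implication $(b) \Rightarrow (c)$ is trivial: $(b)$ exhibits one particular countable context witnessing $\xc$-superstability of $(T, \{\vp,\neg\vp\})$, so such a context exists.

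The substantive direction is $(c) \Rightarrow (a)$, and this is where I expect the main obstacle to lie. Here I would argue by contrapositive: assume $(a)$ fails, so either $\vp$ is not simple, or $\vp$ is simple but $\kappa(\vp, T) > \aleph_0$. In either case, the standard local theory of forking produces a ``local non-supersimplicity'' witness: there is a $\vp$-type $p$ and an increasing sequence of finite parameter sets $B_0 \subseteq B_1 \subseteq \cdots$ (inside the monster model of $T$) such that $p \rstr B_{n+1}$ forks — hence divides, after passing to a finite subtype and using local character of dividing for $\vp$ — over $B_n$ for each $n$. (If $\vp$ is non-simple this comes from the tree property / an infinite forking chain built from it; if $\kappa(\vp,T) > \aleph_0$ this is immediate from the definition, as there is a $\vp$-type forking over every finite subset of its domain.) One then feeds this into Claim \ref{needed2} (applied with $\Delta = \{\vp,\neg\vp\}$): since $(T,\Delta)$ is not supersimple in the usual sense, for \emph{any} countable context $\xc$ the pair $(T,\Delta)$ is $\xc$-unsuperstable, via Claim \ref{needed1} which upgrades dividing to shearing over any context. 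This contradicts $(c)$.

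The point requiring care is the reduction from ``$(a)$ fails'' to ``$(T, \{\vp,\neg\vp\})$ is not supersimple in the usual sense'' in the precise form needed by Claim \ref{needed2}: one must package the failure of simplicity of $\vp$ and the failure of $\kappa(\vp,T) = \aleph_0$ uniformly as the existence of an $\omega$-indexed forking chain $p\rstr B_{n+1}$ forking over $B_n$ of a single $\Delta$-type. For $\kappa(\vp,T) > \aleph_0$ this is essentially the definition; for non-simple $\vp$ one should invoke that $\vp$ having the tree property yields (by the usual argument, e.g.\ via an indiscernible tree and König's lemma, or directly from $\vp$ having $\operatorname{TP}_2$ or the order property consequences) a $\vp$-formula dividing along a chain, and this gives the required $B_n$'s. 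Since Claim \ref{needed2} and Claim \ref{needed1} already do the heavy lifting of transferring this to an arbitrary countable context, no new combinatorics over index models is needed here; the only work is the local-forking bookkeeping, which is standard. I would also remark that Conclusion \ref{m-types} justifies restricting attention to $1$-types throughout, should that simplify the chain-construction.
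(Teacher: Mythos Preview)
Your proposal is correct and follows essentially the same route as the paper: the cycle $(a)\Rightarrow(b)\Rightarrow(c)\Rightarrow(a)$ via Lemma~\ref{t:simple-local}, triviality, and the contrapositive through Claim~\ref{needed2}. The only difference is that you spell out in more detail why the failure of $(a)$ yields that $(T,\{\vp,\neg\vp\})$ is not supersimple in the usual sense, whereas the paper simply asserts this equivalence in one line before invoking Claim~\ref{needed2}.
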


\begin{proof}[Proof of Theorem \ref{n20a}] 

(a) implies (b): 
apply Lemma \ref{t:simple-local}. 

(b) implies (c): immediate. 

(c) implies (a): we prove the contrapositive. If $\vp$ is not simple or $\kappa(\vp, T) > \aleph_0$, 
this amounts to saying that saying that $(T, \Delta)$ is not superstable for $\Delta = \{ \vp, \neg \vp \}$ so we may apply 
 \ref{needed2}. 
\end{proof}

\begin{cor}
Let $T$ be first order complete. Then the following are equivalent:
\begin{enumerate}
\item[(a)] $T$ is simple and $\kappa_{\operatorname{loc}}(T) = \aleph_0$. 
\item[(b)] for some countable context $\xc$, for every $\vp$, $(T, \{ \vp, \neg \vp \})$ is $\xc$-superstable. 
\end{enumerate}
\end{cor}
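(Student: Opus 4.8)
The plan is to read this off, formula by formula, from the local Theorem \ref{n20a}. The only subtlety is that clause (b) here demands \emph{one} countable context $\xc$ serving \emph{all} formulas $\vp$ at once, whereas Theorem \ref{n20a} is phrased a single formula at a time; so the first thing I would observe is that the context exhibited in clause (b) of Theorem \ref{n20a}, namely $\mk_\xc = \mk^{tr}_\kappa$ together with $I_\xc = {}^{\omega>}\{0\}$, is specified with no reference to $\vp$ whatsoever, hence is uniform in $\vp$.

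I would also recall two elementary translations. First, $T$ is simple if and only if every formula $\vp$ of $T$ is simple --- because a theory has the tree property exactly when some single formula does. Second, by the definition recalled just before Lemma \ref{t:simple-local}, $\kappa_{\operatorname{loc}}(T) = \aleph_0$ holds if and only if $\kappa(\vp, T) = \aleph_0$ for every $\vp$.

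For (a) $\Rightarrow$ (b): assuming $T$ is simple with $\kappa_{\operatorname{loc}}(T) = \aleph_0$, fix the context $\xc = ({}^{\omega>}\{0\}, \mk^{tr}_\kappa)$ once and for all. Given any $\vp$, the two translations give that $\vp$ is simple and $\kappa(\vp,T) = \aleph_0$, i.e. clause (a) of Theorem \ref{n20a} holds for $\vp$; hence clause (b) of that theorem holds, so $(T, \{\vp, \neg\vp\})$ is $\xc$-superstable. As $\xc$ was fixed before $\vp$ was chosen, this is exactly (b).

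For (b) $\Rightarrow$ (a): given a countable context $\xc$ such that $(T, \{\vp, \neg\vp\})$ is $\xc$-superstable for every $\vp$, apply Theorem \ref{n20a}, (c) $\Rightarrow$ (a), to each $\vp$ individually --- clause (c) is witnessed by this $\xc$ --- to conclude that each $\vp$ is simple with $\kappa(\vp, T) = \aleph_0$; the two translations then give that $T$ is simple and $\kappa_{\operatorname{loc}}(T) = \aleph_0$, which is (a). There is essentially no obstacle here: all the real work is already in Theorem \ref{n20a}, and the only point requiring a moment's thought is the uniformity-in-$\vp$ observation above, which is transparent from the shape of that theorem's clause (b).
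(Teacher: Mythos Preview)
Your argument is correct and is exactly the intended derivation: the paper states this corollary immediately after Theorem \ref{n20a} with no separate proof, treating it as a direct consequence, and your observation that the context in clause (b) of Theorem \ref{n20a} is independent of $\vp$ is precisely the point that makes the uniform-in-$\vp$ quantifier in (b) go through.
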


\br

\begin{theorem} \label{n5}  
Suppose that $T$ is a complete first order theory. Then the following are equivalent:
\begin{enumerate}
\item[(a)] $T$ is supersimple, i.e. every $p \in \ts(M)$ does not fork over a finite set for $M \in \operatorname{Mod}_T$. 
\item[(b)] $T$ is $\xc$-supersimple where $\mk_\xc = \mk^{tr}_\kappa$ and $I_\xc = {^{\omega>}\{0\}}$. 
\item[(c)] There is some countable context $\xc$ for which $T$ is $\xc$-supersimple. 
\end{enumerate} 
\end{theorem}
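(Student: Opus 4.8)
The plan is to prove Theorem \ref{n5} by reducing it to the already-established local version, Theorem \ref{n20a}, together with the "type $=$ 1-type suffices" reductions collected earlier (Conclusion \ref{m-types}) and the standard fact connecting local and global forking ranks in simple theories. The three implications form a cycle; (b) $\implies$ (c) is trivial, so the real content is (a) $\implies$ (b) and (c) $\implies$ (a).

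For (a) $\implies$ (b): assume $T$ is supersimple. First recall the classical fact that $T$ supersimple is equivalent to: $T$ is simple and $\kappa_{\operatorname{loc}}(T) = \aleph_0$, i.e. for every formula $\vp$ of $T$, every $\vp$-type forks over a finite set (equivalently $\kappa(\vp,T) = \aleph_0$ for every $\vp$); this is the local character of forking in supersimple theories. Hence for each formula $\vp$, the hypothesis of Lemma \ref{t:simple-local} holds, so $(T, \{\vp, \neg\vp\})$ is $\xc$-superstable for the specific countable context $\xc = ({}^{\omega>}\{0\}, \mk^{tr}_\kappa)$. Now I want to upgrade from "superstable for every local $\Delta = \{\vp,\neg\vp\}$" to "superstable" (for full types) for this single fixed $\xc$. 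This is exactly the content of Corollary \ref{m20ab} / Claim \ref{m20aa} run with $\Delta$ the set of all formulas, but more efficiently it follows from Conclusion \ref{m-types}: whether $T$ is $\xc$-superstable is determined by 1-types, and by the saturation characterization (Corollary \ref{m20ab} plus Corollary \ref{t:dir2a}) $\xc$-superstability for $(T,\Delta)$ for every finite $\Delta$ of stable-or-just-local formulas gives, via the chain construction of Claim \ref{m20a}, $\mu$-saturated $\GEM$ models for full $\tau(T)$-types; hence $T$ cannot be $\xc$-unsuperstable. Therefore $T$ is $\xc$-supersimple (the term "$\xc$-supersimple" here being synonymous with "$\xc$-superstable" in the sense of Definition \ref{m5a}, as the context $\mk^{tr}_\kappa$ forces the relevant behaviour to live inside simplicity — cf. \S\ref{s:simple}).

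For (c) $\implies$ (a): prove the contrapositive. Suppose $T$ is not supersimple. Then $T$ is either not simple or $\kappa(T) > \aleph_0$; in either case $T$ is not supersimple in the usual sense, so $(T, \Delta)$ with $\Delta$ all formulas is not supersimple in the usual sense. By Claim \ref{needed2}, for every countable context $\xc$, $(T,\Delta)$ — hence $T$ — is $\xc$-unsuperstable. So no countable context makes $T$ $\xc$-supersimple, which is exactly the negation of (c). Combined with (b) $\implies$ (c) being immediate and (a) $\implies$ (b) above, the three statements are equivalent.

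The main obstacle I anticipate is purely bookkeeping: making precise that "$T$ is $\xc$-supersimple" in clause (b)/(c) means the same as "$T$ is $\xc$-superstable" in the sense of Definition \ref{m5a}, and that the passage from the local statement of Lemma \ref{t:simple-local} (one $\vp$ at a time) to a single global statement does not require changing the context $\xc$ — it does not, because $\mk^{tr}_\kappa$ and $I = {}^{\omega>}\{0\}$ are fixed once and for all, and the construction of Claim \ref{m20a} handles arbitrarily many types (over parameter sets of bounded size) simultaneously by adding many function symbols; the only subtlety is that one must invoke the classical equivalence "supersimple $\iff$ simple $+$ $\kappa_{\operatorname{loc}} = \aleph_0$" to know that the hypothesis of Lemma \ref{t:simple-local} is met for every formula $\vp$ when $T$ is supersimple. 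No new model-theoretic work is needed beyond citing Theorem \ref{n20a}, Claim \ref{needed2}, Corollary \ref{m20ab}, and this standard fact about supersimplicity.
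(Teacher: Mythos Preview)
Your (b) $\Rightarrow$ (c) and (c) $\Rightarrow$ (a) are fine and match the paper. The gap is in (a) $\Rightarrow$ (b), specifically in the ``upgrade'' step. You establish that $(T,\{\vp,\neg\vp\})$ is $\xc$-superstable for every single $\vp$ via Lemma~\ref{t:simple-local}, and then claim this yields global $\xc$-superstability by invoking Corollary~\ref{m20ab} / Claim~\ref{m20aa} ``with $\Delta$ the set of all formulas.'' But the hypothesis of Corollary~\ref{m20ab} with $\Delta$ = all formulas is precisely that $T$ is $\xc$-superstable --- which is what you are trying to prove, so this is circular. Conclusion~\ref{m-types} does not help either: it lets you restrict to $1$-types, not to $\vp$-types. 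And the chain construction of Claim~\ref{m20a} relies on Claim~\ref{m20aa}, whose proof finds a single $m_*$ that works for the \emph{whole} type $p$ at once; if you only know local $\xc$-superstability formula by formula, the witnessing formulas $\vp_n$ in Definition~\ref{m5a} may vary with $n$, and there is no evident way to extract a single $\vp$ for which $(T,\{\vp,\neg\vp\})$ is $\xc$-unsuperstable.

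The paper avoids this detour entirely. Since $T$ is supersimple, every (full) type does not fork over a finite set, so Fact~\ref{n14} applies directly with $\Delta$ = all formulas of $\tau(T)$: for every $\Phi$ there is $\Psi \geq \Phi$ with $\GEM_{\tau(T)}(I,\Psi)$ $\mu$-saturated. If $T$ were $\xc$-unsuperstable, Theorem~\ref{t:dir2} would give a $\Phi_*$ above which no $\GEM$ model is $\aleph_1$-saturated, a contradiction. In short: your instinct to route through Fact~\ref{n14} is right, but you should apply it once globally rather than passing through Lemma~\ref{t:simple-local} and then attempting an unjustified local-to-global step.
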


\begin{proof}

(a) implies (b): Assume $T$ is supersimple and let $\xc = ({^{\omega>}\{0\}}, \mk^{tr}_\kappa)$. 
Assume for a contradiction that $T$ were not $\xc$-superstable. By the Separation Theorem, there would be a 
template $\Phi$ such that for every $\Psi \geq \Psi$, $\GEM(I, \Psi)$ is not $\aleph_1$-saturated. 
This contradicts Fact \ref{n14}. 

(b) implies (c): immediate. 

(c) implies (a): for the contrapositive, assume $T$ is not supersimple. Then by Claim \ref{needed2}, $T$ is not $\xc$-superstable for any countable 
context $\xc$.  
\end{proof}

We end this section with a motivating example for the work above and for \cite{MiSh:F2061}. 
\emph{Reminder: all contexts are countable.}
Fix for awhile $\xc = (I, \mk) = (I_\xc, \mk_\xc)$ some countable context and 
we shall investigate how $\xc$-dividing may arise for $\trg$ inside a $\GEM$-model.\footnote{This is similar to 
\cite{MiSh:1124} Claim 5.10, though slightly more general.}
Consider $M = \GEM(I, \Phi)$, where $\Phi \in \Upsilon[\trg]$ thus $(M, R^M) \models \trg$, 
and let $p \in \ts(M \rstr \tau(\trg))$ be a nonalgebraic type. Fix $J$ such that $I \subseteq J \in \mk$ and $J$ is $\aleph_0$-saturated. 
Let $N  = \GEM(J, \Phi)$. By our assumption that all contexts are nice, see \ref{c:nice}, $M \preceq N$, so we will identify the sequence 
$\langle \bar{a}_t : t \in I \rangle$ which generates $M$ with a subsequence of $\langle \bar{a}_t : t \in J \rangle$. 

By quantifier elimination, $p$ is equivalent to  $\{ R(x,b_\alpha)^{\ii_\alpha} \land x \neq b_\alpha ~:~ \alpha < \kappa \}$ for some $\kappa$, where each 
$\ii_\alpha \in \{ 0, 1 \}$.  As $M$ is generated by $\{ \bar{a}_t : t \in I \}$, each $b_\alpha$ 
may be written as 
$\sigma^M_\alpha(\bar{a}_{\bar{t}_\alpha})$ for some $\tau(\Phi)$-term $\sigma_\alpha$ and some $\bar{t}_\alpha \in \inc(I)$. 
This representation may, of course, not be unique. Since we are ultimately looking for a criterion that will prevent $\xc$-dividing, there is 
a priori no harm in choosing our enumeration to include all such representations. That is, 
without loss of generality, for some $\kappa = \kappa + |\tau(\Phi)|$, 
\begin{equation} \label{eq:p}
p(x) \equiv \{ R(x, \sigma^M_\alpha(\bar{a}_{\bar{t}_\alpha}))^{\ii_\alpha} \land 
x \neq \sigma^M_\alpha(\bar{a}_{\bar{t}_\alpha}) : \alpha < \kappa \}. 
\end{equation}  where for each $\alpha < \kappa$ and $\bar{t} \in {^{\omega >}I}$ if $\sigma^M(\bar{a}_{\bar{t}}) = \sigma^M_\alpha(\bar{a}_{\bar{t}_\alpha})$, 
then 
\begin{equation} \label{eq:r}
\mbox{ for some $\beta < \kappa$, $\sigma_\beta = \sigma$ and $\bar{t}_\beta = \bar{t}$. }
\end{equation}
Why? {This simply says we may choose to include all ways of generating the mentioned elements from the skeleton.}  
Note this may increase the length of the enumeration, but will not change the size of the type in $\tau(\trg)$.  
[From the point of view of $M \rstr \tau(\trg)$, it may appear that we have 
repeated various formulas of the form 
$R(x,b_\alpha)^{\ii_\alpha} \land x \neq b_\alpha$ many times, because we have listed an instance for each way of 
writing $b_\alpha$ in $M$ in terms of the skeleton; whereas from the point of view of 
our enumeration which has access to $\tau(\Phi)$, for each $b_\alpha$ there are potentially $|I| + |\tau(\Phi)|$ such 
representations.]

Recalling our fixed $\aleph_0$-saturated $J$ extending $I$ and its associated 
$N = \GEM(J, \Phi)$, we ask about potential $\xc$-dividing. 
Working in $N$, consider the set of formulas 
\begin{align*} \label{eq:q}
q(x) = q_{I_0}(x) =  & \{ R(x, \sigma^N_\alpha(\bar{a}_{\bar{u}}))^{\ii_\alpha} 
\land x \neq \sigma^M_\alpha(\bar{a}_{\bar{u}}) : 
 \alpha < \kappa, \\ & \mbox{ \hspace{5mm} } \tpqf(\bar{u}, I_0, J) = 
\tpqf(\bar{t}_\alpha, I_0, I) \}. 
\end{align*} 
To show $q(x)$ is consistent, it would suffice to check that whenever 
\begin{equation} 
R(x, \sigma^N_\alpha(\bar{a}_{\bar{v}}))^{\ii_\alpha}  \in q \mbox{ and } R(x, \sigma^N_\beta(\bar{a}_{\bar{w}}))^{\ii_\beta}  \in q 
\end{equation}
we have that {if} $\sigma^N_\alpha(\bar{a}_{\bar{v}}) = \sigma^N_\beta(\bar{a}_{\bar{w}})$ {then} $\ii_\alpha = \ii_\beta$. 
Suppose this fails. That is, suppose for some suitable $\alpha, \beta, \bar{v}_*, \bar{w}_*$ which we fix for awhile, 
$q$ contains the contradictory formulas 
\begin{equation}
\label{eq:as1} R(x,\sigma^N_\alpha(\bar{a}_{\bar{v}_*}))  \mbox{ and } \neg R(x,\sigma^N_\beta(\bar{a}_{\bar{w}_*})). 
\end{equation}
In other words, 
\begin{equation} 
\label{eq:as}
\sigma^N_\alpha(\bar{a}_{\bar{v}_*}) = \sigma^N_\beta(\bar{a}_{\bar{w}_*}) = {b} 
\mbox{ but } \ii_\alpha \neq \ii_\beta
\mbox{ (here w.l.o.g. $\ii_\alpha = 1$, $\ii_\beta = 0$). }
\end{equation} 
We may visualize what has happened as follows. In $N$, there is a ``positive line'' consisting of the elements 
\[ \posl = \{ \sigma^N_\alpha(\bar{a}_{\bar{v}}) :  \tpqf(\bar{v}, I_0, J) = \tpqf(\bar{t}_\alpha, I_0, I) \} \subseteq \dom(N) \]
and a ``negative line'' consisting of the elements 
\[ \negl = \{ \sigma^N_\beta(\bar{a}_{\bar{w}}) :  \tpqf(\bar{w}, I_0, J) = \tpqf(\bar{t}_\beta, I_0, I) \} \subseteq \dom(N) \]
and we have that  
\begin{equation} \label{eq:pl}
\posl \cap \negl \neq \emptyset, \mbox{ witnessed by } b = \sigma^N_\alpha(\bar{a}_{\bar{v}_*}) = \sigma^N_\beta(\bar{a}_{\bar{w}_*}). 
\end{equation}
However, it is also important to notice that both ``lines'' have ``points from $I$'', 
and that these are not the point(s) of intersection:\footnote{If $p$ is a complete type in $M$, 
then we will have the stronger statement that ``the restrictions of $\posl$ and $\negl$ to $I$ have no intersection,'' i.e. 
$\{ \sigma^N_\alpha(\bar{a}_{\bar{v}}) :  \tpqf(\bar{v}, I_0, I) = \tpqf(\bar{t}_\alpha, I_0, I) \} \cap  
\{ \sigma^N_\beta(\bar{a}_{\bar{w}}) :  \tpqf(\bar{w}, I_0, I) = \tpqf(\bar{t}_\beta, I_0, I) \} = \emptyset$, but we do not need this here.}
\begin{equation} \label{eq:7} \sigma^N_\alpha(\bar{a}_{\bar{t}_\alpha}) \neq \sigma^N_\beta(\bar{a}_{\bar{t}_\beta}) 
\end{equation} 
else our original $p$ would be inconsistent. (So both $\posl$ and $\negl$ have size $\geq 2$.) 

To see the key property of $\xc$  
hidden in this picture, let us pull back the picture from $N$ to a picture on $I$. Definition \ref{d:circ} 
may be suggested by two observations. First, writing $\xr_\alpha = \tpqf(\bar{t}_\alpha, I_0, I)$, we have that ``$\sigma^N_\alpha(\bar{a}_{\bar{v}}) = \sigma^N_\alpha(\bar{a}_{\bar{u}})$'' is an equivalence relation on $Q^J_{\xr_\alpha}$ (asserting that $\bar{v}, \bar{u}$ are equivalent), 
and similarly for $\sigma^N_\beta$ and $\xr_\beta$. 
Second, the fact that $\bar{t}_\alpha$ and $\bar{t}_\beta$ may have different types is a red herring (as will be explained).
The third formula, $F$, will give the analogue of (\ref{eq:as}) ``points of intersection.''

\begin{defn} \label{d:circ}
The context $\xc = (I, \mk)$ has $\xc$ has property $\ocirc$ when:

\begin{quotation}
\noindent  For every finite $I_0 \subseteq I$ there is a finite $I_1$ with 
$I_0 \subseteq I_1 \subseteq I$, letting $\bar{s}$, $\bar{t}$ list 
$I_0$, $I_1$ respectively, such that for any $\aleph_0$-saturated $J \supseteq I$ there exist quantifier-free 
$($possibly infinitary$)$ formulas of $\tau(\mk)$ called $F(\bar{x}_1, \bar{x}_2; \bar{y})$,  
$E_1(\bar{x}_1, \bar{x}_2; \bar{y})$, 
$E_2(\bar{x}_1, \bar{x}_2; \bar{y})$, such that
 $\ell(\bar{x}_1) = \ell(\bar{x}_2) = \lgn(\bar{t})$, $\lgn(\bar{y}) = \lgn(\bar{s})$, and:  
\begin{itemize}
\item[(i)] for $\ii = 1, 2$ $E_\ii(\bar{x}_1, \bar{x}_2; \bar{s})$ defines an equivalence relation on 
$Y_{\bar{s}} = \{ \bar{t}^\prime \in {^{\lgn(\bar{t})}J} : 
\tpqf(\bar{t}^\prime, \bar{s}, J) = \tpqf(\bar{t}, \bar{s}, J) \}$.
\item[(ii)] $F(\bar{x}_1, \bar{x}_2; \bar{y})$  
defines a nonempty one to one partial function from $Y_{\bar{s}}/E_1(-,-; \bar{s})$ to $Y_{\bar{s}}/E_2(-,-; \bar{s})$, and 
\item[(iii)] $F$ has no fixed points, in other words for no $\bar{t} \in Y_{\bar{s}}$ is it the case that $F(\bar{t}, \bar{t}; \bar{s})$. 
\end{itemize}
\end{quotation}
\end{defn}

In a manuscript in preparation, we use property $\ocirc$ to characterize countable contexts for which the random graph is 
$\xc$-unsuperstable \cite{MiSh:F2061}. 

\br

\section{Further remarks and open questions}  \label{other-kappa}

\setcounter{theoremcounter}{0}

As noted above, the results in the present paper dealing with countable contexts may be seen as the case of ``$\kappa = \aleph_0$.'' 
In this section we record the natural extensions of the definitions and theorems to the 
case of arbitrary $\kappa$ without proofs, noting the main work of the proofs is already done in the case $\kappa = \aleph_0$ 
(so even though we plan to give some details in future work, 
it is worth stating these versions  here for the 
interested reader). 
Complementary to the above comments: 

\begin{disc}
Beginning with \S \ref{s:unss}, we use repeatedly that $I$ is a countable context. ``Countably generated'' is likely enough, but one would have to 
check carefully.
\end{disc}

\begin{defn} \label{p2}  \emph{ }
\begin{enumerate}
\item We say $\xc = (I, \mk)$ is a $\kappa$-context when it is a context and in addition if $\kappa$ is regular, $I$ is generated by 
$\kappa$ elements but not by $<\kappa$ elements.

\item We say $(\bar{I}, \xt)$ $\kappa$-represents $I \in \mk$ when $\bar{I} = \langle I_\zeta : \zeta < \kappa \rangle$ is increasing, 
$\xt = \{ \bar{t}_\zeta : \zeta < \kappa \rangle$, $\bar{t}_\zeta \subseteq I_\zeta$ is finite, $I_\zeta = \cl_I(\bar{t}_\zeta)$, 
$\bigcup_\zeta I_\zeta = I$.

\item $I \in \mk$ has a $\kappa$-representation iff $(I, \mk)$ is a $\kappa$-context for $\mk$ as usual. 

\end{enumerate}
\end{defn}

\begin{defn} \label{p5}
Let $\xc = (I, \mk)$ be a $\kappa$-context, $(\bar{I}, \xt)$ a $\kappa$-representation. We say a complete first-order theory $T$ is $\xc$-superstable when 
there is no $\preceq$-increasing $\langle M_\zeta : \zeta \leq \kappa \rangle$ sequence of models of $T$ and $p \in \ts(\bigcup_{\zeta < \kappa} M_\zeta)$ 
such that $p \rstr (M_{\zeta +1})$  $\xc$-shears over $M_\zeta$. 
\end{defn}

\begin{extn} \label{p8}
Let $\xc$ be a $\kappa$-context, $\xc = (I, \mk)$, $T$ first order complete, and $\lambda > \kappa$. 
$T$ is $\xc$-superstable ~iff~ for a dense set of $\Phi \in \Upsilon_\mk[T]$, $\GEM_{\tau(T)}(I, \Phi)$ is $\lambda$-saturated.  
\end{extn}

\begin{extn} \label{p14}
Let $T$ be complete, $\kappa$ regular. The following are equivalent:
\begin{enumerate}
\item $T$ is simple, $\kappa(T) \leq \kappa$.
\item For some $\kappa$-context $\xc$, $T$ is $\xc$-superstable. 
\end{enumerate}
\end{extn}

\noindent \textbf{Some questions}. Next we turn to some natural questions. 

\begin{qst}
Recalling $\ref{d:indr}$, shearing is strictly weaker than dividing, therefore {$($non-$)$}shearing cannot satisfy all the usual properties of 
independence relations in simple theories. 
Which such properties hold for shearing, and which fail? Does this depend on the context? 
\end{qst}

\begin{qst}
Does the analogue of ``forking = dividing'' hold for shearing in simple theories?
\end{qst}

For the next question, note that we may characterize the class of theories $T$ 
such that ``for any countable context $\xc$, $T$ is $\xc$-superstable if and only if 
the random graph is $\xc$-superstable'' indirectly by property $\ocirc$. Namely, this is the class of $T$ which are $\xc$-superstable for 
a countable context $\xc$ if and only if $\neg \ocirc_\xc$. 

\begin{qst}
Consider the class of theories $T$ such that ``for any countable context $\xc$, $T$ is $\xc$-superstable if and only if 
the random graph is $\xc$-superstable''.  Give an internal model-theoretic characterization of this class. 
\end{qst}

\begin{qst} \label{qst-order}
We may define an ordering on theories: $T_1 \leq T_2$ if for every countable context $\xc$, if $T_2$ is $\xc$-superstable then 
$T_1$ is $\xc$-superstable. What is the structure of this ordering?
\end{qst}

Question \ref{qst-order} requires both understanding theories and building contexts. 
The results of the present paper give some partial information:

\begin{lemma} In the ordering of $\ref{qst-order}$, 
\begin{itemize} 
\item The superstable theories are minimal, since every type in a superstable theory is definable (thus weakly definable) over a finite set, so it follows that such theories are $\xc$-superstable for every countable context $\xc$.  

\item The random graph is minimal among the unstable theories. $($If $T$ has the independence property, it will be susceptible to $\ocirc$, and 
if $T$ is not simple, see next item.$)$  

\item The non-supersimple theories are precisely those which are not $\xc$-superstable for any countable context $\xc$, see Theorem \ref{n5}. 
\end{itemize}
\end{lemma}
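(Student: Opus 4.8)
\emph{Proof proposal.} The plan is to establish the three clauses separately, taking the last two first since they are short bookkeeping. \emph{(Non-supersimple theories.)} This is just the contrapositive of the equivalence of (a) and (c) in Theorem~\ref{n5}: negating both sides, a complete $T$ is not supersimple exactly when there is \emph{no} countable context $\xc$ for which $T$ is $\xc$-supersimple, i.e.\ exactly when $T$ is not $\xc$-superstable for any countable context. So here I would simply cite \ref{n5}.

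\emph{(Minimality of $\trg$ among the unstable theories.)} I must show that for every unstable $T$ and every countable context $\xc$, if $T$ is $\xc$-superstable then so is $\trg$; equivalently, $\trg$ not $\xc$-superstable $\Rightarrow T$ not $\xc$-superstable. I would split on whether $T$ is simple. If $T$ is not simple, then $T$ is not supersimple, so by the clause just noted $T$ is not $\xc$-superstable for \emph{any} countable $\xc$ and the implication is vacuous. If $T$ is simple and unstable, then $T$ has the independence property — a simple theory has no strict order property, while the order property yields the independence or the strict order property — so $T$ is ``susceptible to $\ocirc$'' in the sense recorded just before Question~\ref{qst-order}. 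Here I would invoke the two facts from the Appendix's analysis of $\ocirc$: that $\trg$ is $\xc$-superstable iff $\neg\ocirc_\xc$, and that $\ocirc_\xc$ together with the independence property of $T$ forces $T$ to be $\xc$-unsuperstable; chaining these gives the implication. In either case $\trg$ lies $\leq$-below every unstable theory (and $\trg$ is itself unstable), so it is minimal, indeed least, among them.

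\emph{(Minimality of the superstable theories — the substantive clause.)} Fix a superstable $T$ and an \emph{arbitrary} countable context $\xc=(I,\mk)$; it is enough to show $T$ is $\xc$-superstable, for then the defining implication of the ordering of \ref{qst-order} has a true conclusion for every countable context, whence $T\leq T'$ for all $T'$, so $T$ is $\leq$-least, in particular minimal. The crux is the following Step~1: every type $p$ of every $M=\GEM_{\tau(T)}(I,\Phi)$ with $\Phi\in\Upsilon_\xc[T]$ has a weak definition (Definition~\ref{d:gd}) over a \emph{single} finite $\bar t_*\in\inc(I)$. This is where I would use superstability rather than mere stability: since $\kappa(T)=\aleph_0$, $p$ does not fork over a finite $A_0\subseteq M$, and being stationary it is the nonforking extension over $M$ of its restriction to its canonical base $\operatorname{Cb}(p)\subseteq\acl^{\operatorname{eq}}(A_0)$; thus $p$ is given by a single scheme of definitions, uniform in the formula, whose parameters are algebraic over $A_0$ (passing to $T^{\operatorname{eq}}$ if needed). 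Choosing a finite $\bar t_*\in\inc(I)$ with $A_0\subseteq\GEM(\bar t_*,\Phi)$, and using that $\GEM(\bar t_*,\Phi)\preceq M$ together with the definable closure available in the index model lets these finitely-many-per-formula algebraic parameters be expressed uniformly over $\bar a_{\bar t_*}$, the argument of Claim~\ref{c:stable-def} then applies simultaneously to all formulas and produces the desired $F$ over $\bar t_*$.

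Given Step~1, I would finish with Step~2. By Claim~\ref{c:142}, each such $p$ is realized in $\GEM(I,\Psi)$ for some $\Psi\geq\Phi$; feeding this ``realizability after enlarging the template'' into the induction in the proof of Claim~\ref{m20a}, in place of its realization step (Claim~\ref{m20aa}, which was proved from $\xc$-superstability), shows that for arbitrarily large $\mu$ and a dense set of $\Psi\in\Upsilon_\xc$ the model $\GEM_{\tau(T)}(I,\Psi)$ is $\mu$-saturated. If $T$ were $\xc$-unsuperstable, Theorem~\ref{t:dir2} would hand us $\Phi_*$ with $\GEM_{\tau(T)}(I,\Psi)$ not $\aleph_1$-saturated for every $\Psi\geq\Phi_*$, contradicting that density; hence $T$ is $\xc$-superstable. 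The one genuine obstacle I anticipate is the coordination in Step~1 — extracting \emph{one} finite $\bar t_*$ for the whole type, through $\acl^{\operatorname{eq}}$ — which is exactly where supersimplicity would not suffice (cf.\ the Appendix, where supersimple but unstable theories such as $T_{n,k}$ fail $\xc$-superstability for suitably chosen countable $\xc$); everything else is assembly of \ref{c:stable-def}, \ref{c:142}, \ref{m20a}, \ref{t:dir2} and \ref{n5}.
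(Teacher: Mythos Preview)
Your proposal is correct and follows exactly the paper's sketched route (the parenthetical remarks inside the lemma are its entire proof); you have simply made explicit the chain from weak definability through \ref{c:142}, the induction of \ref{m20a}, and the contradiction with \ref{t:dir2}. The obstacle you flag in Step~1 dissolves once you use directly that $\GEM(\bar t_*,\Phi)\preceq M$ is itself a model of $T$: the restriction $p\rstr\GEM(\bar t_*,\Phi)$ is then definable over that model with parameters already in it, and $p$, being its unique nonforking extension, is defined by the same schema --- so every defining parameter is a $\tau(\Phi)$-term in $\bar a_{\bar t_*}$ and \ref{c:stable-def} runs uniformly, with no detour through $\acl^{\operatorname{eq}}$ required.
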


\begin{qst}
Is there a maximal class among the supersimple theories? 
\end{qst} 

One could eventually consider the analogue of \ref{qst-order} for contexts which are not necessarily countable, but for this it would make sense to 
first extend the results of this paper along the lines of $\S \ref{other-kappa}$ above.

\end{document}